\let\oldmarginpar\marginpar
\renewcommand\marginpar[1]{\-\oldmarginpar[\raggedleft\footnotesize #1]{\raggedright\footnotesize #1}}
\newtheorem{theorem}{Theorem}[section]
\newtheorem{lemma}[theorem]{Lemma}
\newtheorem{proposition}[theorem]{Proposition}
\newtheorem{corollary}[theorem]{Corollary}
\newtheorem*{Aa}{Addendum to Theorem~\ref{Ohio}}
\newtheorem*{Ab}{Addendum to Theorem~\ref{Montana}}
\theoremstyle{definition}
\newtheorem{definition}[theorem]{Definition}
\newtheorem{example}[theorem]{Example}
\theoremstyle{remark}
\newtheorem{remark}[theorem]{Remark}
\newtheoremstyle{intro}
{12pt}
{12pt}
{\itshape}
{}
{\bfseries}
{.}
{.5em}
{}
\theoremstyle{intro}
\newtheorem{introthm}{Theorem}
\definecolor{refkey}{rgb}{1,0,0}
\definecolor{labelkey}{rgb}{0,0,1}
\definecolor{red}{rgb}{1,0,0}
\DeclareMathOperator{\Hom}{Hom}
\DeclareMathOperator{\End}{End}
\DeclareMathOperator{\Der}{Der}
\DeclareMathOperator{\id}{id}
\DeclareMathOperator{\ad}{ad}
\DeclareMathOperator{\sym}{sym}
\DeclareMathOperator{\pbw}{pbw}
\DeclareMathOperator{\Bott}{Bott}
\DeclareMathOperator{\sgn}{sgn}
\DeclareMathOperator{\LC}{LC}
\DeclareMathOperator{\CE}{CE}
\DeclareMathOperator{\Gr}{Gr}
\newcommand{\Koszul}[1]{\sgn(#1)}
\newcommand{\argument}{\mathord{\color{black!25}-}}
\newcommand{\abs}[1]{\left| #1 \right|}
\newcommand{\into}{\hookrightarrow}
\newcommand{\onto}{\twoheadrightarrow}
\newcommand{\xto}[1]{\xrightarrow{#1}}
\newcommand{\NN}{\mathbb{N}}
\newcommand{\ZZ}{\mathbb{Z}}
\newcommand{\RR}{\mathbb{R}}
\newcommand{\CC}{\mathbb{C}}
\newcommand{\KK}{\Bbbk}
\newcommand{\cinf}[1]{C^{\infty}(#1)}
\newcommand{\shuffle}[2]{\mathfrak{S}_{#1}^{#2}}
\newcommand{\sections}[1]{\Gamma(#1)}
\newcommand{\enveloping}[1]{\mathcal{U}(#1)}
\newcommand{\zo}{^{0,1}}
\newcommand{\oz}{^{1,0}}
\newcommand{\XX}{\mathfrak{X}}
\newcommand{\OO}{\Omega}
\newcommand{\groupoid}[1]{\mathscr{#1}}
\newcommand{\group}[1]{\mathscr{#1}}
\newcommand{\duality}[2]{\left\langle #1 \middle| #2\right\rangle}
\newcommand{\lie}[2]{[#1,#2]}
\newcommand{\frakg}{\mathfrak{g}}
\newcommand{\frakh}{\mathfrak{h}}
\newcommand{\fraka}{\mathfrak{a}}
\newcommand{\DO}{\mathcal{D}}
\newcommand{\source}{s}
\newcommand{\target}{t}
\newcommand{\livf}[1]{\overset{\shortrightarrow}{#1}}
\newcommand{\rivf}[1]{\overset{\shortleftarrow}{#1}}
\newcommand{\anchor}{\rho}
\newcommand{\cR}{\mathcal{R}}
\newcommand{\Qd}{D}
\newcommand{\torsion}{T^\nabla}
\newcommand{\curvature}{R^\nabla}
\newcommand{\XXfv}{\XX^{\operatorname{fv}}}
\newcommand{\cocycle}{\mathcal{Z}}
\newcommand{\scindement}{\kappa_j}
\newcommand{\nablanatural}{\nabla}
\title[PBW isomorphisms and Kapranov dg-manifolds]
{Poincaré--Birkhoff--Witt isomorphisms \\ and Kapranov dg-manifolds}
\thanks{Research partially supported by NSF grants DMS-2001599, DMS-1707545, DMS-1406668, DMS-1101827, and NSA grant H98230-14-1-0153.}
\author{Camille Laurent-Gengoux}
\address{Institut Élie Cartan de Lorraine, UMR 7502, Université de Lorraine}
\email{camille.laurent-gengoux@univ-lorraine.fr}
\author{Mathieu Stiénon}
\address{Department of Mathematics, Pennsylvania State University}
\email{stienon@psu.edu}
\author{Ping Xu}
\address{Department of Mathematics, Pennsylvania State University}
\email{ping@math.psu.edu}
\begin{document}

\begin{abstract}
We prove that to every inclusion $A\into L$ of Lie algebroids over the same base manifold $M$
corresponds a Kapranov dg-manifold structure on $A[1]\oplus L/A$,
which is canonical up to isomorphism.
As a consequence, $\sections{\Lambda^\bullet A^\vee\otimes L/A}$
carries a canonical $L_\infty[1]$ algebra structure
whose unary bracket is the Chevalley--Eilenberg differential
$d_A^{\nabla^{\Bott}}$ corresponding to the Bott representation of $A$ on $L/A$
and whose binary bracket is a cocycle representative of the Atiyah class of the Lie pair $(L,A)$.
To this end, we construct explicit isomorphisms of $C^\infty(M)$-coalgebras
$\sections{S(L/A)}\xto{\sim}\frac{\mathcal{U}(L)}{\mathcal{U}(L)\sections{A}}$,
which we elect to call Poincaré--Birkhoff--Witt maps.
These maps admit a recursive characterization that allows for explicit computations.	
They generalize both the classical symmetrization map
$S(\mathfrak{g})\to\enveloping{\mathfrak{g}}$ of Lie theory
and (the inverse of) the complete symbol map for differential operators.
Finally, we prove that the Kapranov dg-manifold $A[1]\oplus L/A$ is linearizable
if and only if the Atiyah class of the Lie pair $(L,A)$ vanishes.
\end{abstract}

\maketitle

\tableofcontents


\section{Introduction}
\label{Biloxi}

In his work~\cite{MR1671737} on Rozansky--Witten invariants,
Kapranov discovered a natural $L_\infty[1]$ algebra structure
on the Dolbeault complex $\OO^{0,\bullet}(T^{1,0}_X)$ of an arbitrary Kähler manifold $X$.
This $L_\infty[1]$ algebra plays an important role
in derived geometry~\cite{MR2657369,MR2472137,MR2431634}.
Its unary bracket $\lambda_1$ is the Dolbeault operator $\overline{\partial}$,
its binary bracket $\lambda_2$ is the composition
\[ \OO^{0,k}(T^{1,0}_X)\otimes\OO^{0,l}(T^{1,0}_X)\xto{\wedge}\OO^{0,k+l}(T^{1,0}_X\otimes T^{1,0}_X)
\xto{\mathcal{R}_2}\OO^{0,k+l+1}(T^{1,0}_X) \]
of the wedge product and the contraction with the Atiyah cocycle
\[ \mathcal{R}_2\in\OO^{0,1}\big(\Hom(T^{1,0}_X\otimes T^{1,0}_X,T^{1,0}_X)\big) ,\]
and its higher multi-brackets are all obtained similarly as
compositions of the wedge product and some explicit contractions.

The Atiyah class \cite{MR86359} of the holomorphic tangent bundle $T_X$ of a complex manifold $X$
is the cohomology class of $\mathcal{R}_2$ in 
$H^{1,1}(X;\End(T_X))\cong
H^1\big(X;\OO_X^1\otimes\End(T_X)\big)$.
It is the obstruction to the existence of a holomorphic connection on $T_X$.
Together with Chen, two of the authors defined a similar Atiyah class
in the much wider context of \emph{pairs of Lie algebroids} in~\cite{MR3439229}.
We say that $(L,A)$ is a pair of Lie algebroids or \emph{Lie pair} if
$A$ is a subalgebroid of a Lie algebroid $L$ over the same base manifold.
The Atiyah class $\alpha_{L/A}$ of the Lie pair
captures the obstruction to the existence of an \emph{`$A$-compatible'} $L$-connection on $L/A$.
It generalizes both the classical Atiyah class of holomorphic tangent bundles~\cite{MR86359}
and the Molino class of foliations~\cite{MR0281224,MR0353332}.
It is thus natural to ask whether Kapranov's construction of an
$L_\infty[1]$ algebra can be extended to all Lie pairs.
In this paper, we give an affirmative answer to this question.
As a byproduct, we show that 
Kapranov's explicit construction
for Kähler manifolds extends to 
all complex manifolds.

It is well known that a finite-dimensional $L_\infty[1]$ algebra
is equivalent to a finite-dimensional $\ZZ$-graded vector space endowed with
an additional structure of pointed dg-manifold~\cite{MR2062626,MR1183483},
i.e.\ a dg-vector space whose homological vector field vanishes at the origin.

Kapranov dg-manifolds --- the core notion of the present paper ---
are analogues of pointed dg-manifolds in the context
of $\ZZ$-graded vector bundles.
More precisely, a \emph{Kapranov dg-manifold} consists of a pair of smooth vector bundles $A$ and $E$
over a common base manifold together with a pair of homological vector fields on the
$\ZZ$-graded manifolds $A[1]\oplus E$ and $A[1]$ such that both the inclusion
$A[1]\into A[1]\oplus E$ and the projection $A[1]\oplus E\onto E$ are morphisms of dg-manifolds.
Given a Kapranov dg-manifold, the vector bundle $A$ is necessarily a Lie algebroid,
the vector bundle $E$ is necessarily an $A$-module for some representation $\nablanatural$,
and the graded vector space $\bigoplus_{n\geqslant 0}\sections{\Lambda^n A^\vee\otimes E}$ necessarily
inherits a structure of $L_\infty[1]$ algebra encoded by a sequence $(\lambda_k)_{k\in\NN}$
of multi-brackets
$\lambda_k:S^k\big(\sections{\Lambda A^\vee\otimes E}\big)\to\sections{\Lambda A^\vee\otimes E}[1]$
such that each $\lambda_k$ with $k\geqslant 2$ is $\sections{\Lambda A^\vee}$-multilinear
while $\lambda_1$, which is not $\sections{\Lambda A^\vee}$-linear,
is the Chevalley--Eilenberg differential $d_A^{\nablanatural}$
associated with the representation $\nablanatural$ of the Lie algebroid $A$ on the vector bundle $E$.

We prove:
\begin{introthm}\label{Amsterdam}
Given a Lie pair $(L,A)$, one can endow $A[1]\oplus L/A$ with a structure of Kapranov dg-manifold.
Its homological vector field is the sum $\Qd=d_A^{\nabla^{\Bott}}+\sum_{k=2}^{\infty}\cR_k$
of the Chevalley--Eilenberg differential $d_A^{\nabla^{\Bott}}$
induced by the Bott representation of $A$ on $L/A$,
a 1-cocycle $\cR_2\in\sections{A^\vee\otimes S^2(L/A)^\vee\otimes L/A}$
representative of the Atiyah class of the Lie pair $(L,A)$,
and higher order terms $\cR_k\in\sections{A^\vee\otimes S^k(L/A)^\vee\otimes L/A}$
that can be computed iteratively.
Though the construction of the homological vector field $\Qd$ depends on some choice,
different choices yield canonically isomorphic Kapranov dg-manifold structures.
\end{introthm}

Conceptually, one can distinguish two key steps in the proof of Theorem~\ref{Amsterdam}.
First, working within the realm of Lie algebroids over $\RR$
where an analogue of Lie's third theorem \cite{MR0231414} holds (at least locally),
one constructs the Kapranov dg-manifold associated with a real Lie pair in a purely geometric way.
Next, one translates the geometric construction into purely algebraic formulae
that make sense for Lie algebroids both real and complex,
which shows that the restriction to real Lie algebroids is unnecessary.
Thus, Theorem~\ref{Amsterdam} holds for complex Lie algebroids as well.

Given a real Lie pair $(L,A)$ over a base manifold $M$, an analogue of
Lie's third fundamental theorem for Lie groupoids \cite{MR0231414}
asserts the existence of a pair $(\groupoid{L},\groupoid{A})$ of local
Lie groupoids \cite{MR2157566},
which the Lie functor transforms into $L$ and $A$ respectively.
Here $\groupoid{A}$ is a closed local Lie subgroupoid of $\groupoid{L}$.
Consider the differential graded commutative algebra
$\big(\sections{\Lambda^{\bullet} A^\vee}\otimes_R
C^\infty_\text{formal}(\groupoid{L}/\groupoid{A}), d_{A} \big )$,
where $C^\infty_\text{formal}(\groupoid{L}/\groupoid{A})$
denotes the algebra of functions on the formal
neighborhood of the unit section in the fiber bundle
$s:\groupoid{L}/\groupoid{A}\to M$
--- more precisely, the algebra of $s$-fiberwise $\infty$-jets along the unit space $M$
of functions on $\groupoid{L}/\groupoid{A}$ ---
and $d_{A}$ denotes the Chevalley--Eilenberg differential
for the $A$-module structure on $C^\infty_\text{formal}(\groupoid{L}/\groupoid{A})$
stemming from the natural left $A$-action on $\groupoid{L}/\groupoid{A}$.
The map $s:\groupoid{L}/\groupoid{A}\to M$ denotes
the factorization of the source map $s: \groupoid{L}\to M$
through the projection $\groupoid{L}\onto\groupoid{L}/\groupoid{A}$.
Since $L/A$ may be regarded as the normal bundle of $M$ in $\groupoid{L}/\groupoid{A}$,
the algebra $C^\infty_\text{formal}(\groupoid{L}/\groupoid{A})$
can be identified with $\sections{\hat{S}(L/A)^\vee}$,
the algebra of functions on the formal neighborhood of the zero section of
the vector bundle $\pi:L/A\to M$ --- more precisely,
the algebra of $\pi$-fiberwise $\infty$-jets along the zero section of functions on $L/A$.
Substituting $\sections{\hat{S}(L/A)^\vee}$ for $C^\infty_\text{formal}(\groupoid{L}/\groupoid{A})$,
we obtain the differential graded commutative algebra
$\big(\sections{\Lambda^\bullet A^\vee\otimes\hat{S}(L/A)^\vee},\Qd\big)$.
Its differential $\Qd$ is the homological vector field
on $A[1]\oplus L/A$ appearing in Theorem~\ref{Amsterdam}.

In order to obtain an explicit formula for the homological vector field $\Qd$,
an explicit isomorphism between the fiber bundles $\pi: L/A\to M$
and $s: \groupoid{L}/\groupoid{A} \to M$ is needed.
For this purpose, we construct exponential maps
from $L/A$ to $\groupoid{L}/\groupoid{A}$ in Section~\ref{Hungary}.
Indeed, every choice of (i) a splitting $j:L/A\to L$ of the short exact sequence of vector bundles
$0\to A\to L\to L/A\to 0$ and (ii) an $L$-connection $\nabla$ on $L/A$ determines an exponential map
$\exp^{\nabla,j}:L/A\to\groupoid{L}/\groupoid{A}$.

When the splitting $j$ identifies $L/A$ with a Lie subalgebroid of $L$ --- say $B$ ---
the homogeneous space $\groupoid{L}/\groupoid{A}$ gets identified locally
with the corresponding Lie subgroupoid of $\groupoid{L}$
--- say $\groupoid{B}$. In that case, $A$ and $B$ are said to form
a matched pair of Lie algebroids \cite{MR1460632}.
With these identifications, each $L$-connection $\nabla$ on $L/A$
determines a $B$-connection $\ddot{\nabla}$ on $B$,
and the exponential map $\exp^{\nabla,j}:L/A\to\groupoid{L}/\groupoid{A}$,
arising from $j$ and $\nabla$,
is precisely the exponential map
$\exp^{\ddot{\nabla}}:B\to\groupoid{B}$ associated with the connection $\ddot{\nabla}$
investigated in~\cite{MR1722129,MR1687747}.
In particular, when the base manifold $M$ is the one-point space $\{*\}$,
the Lie algebroid $B$ is simply a Lie algebra $\mathfrak{g}$,
the Lie groupoid $\groupoid{B}$ is the corresponding Lie group $G$,
and the $B$-connection on $B$ is simply a linear map $\mathfrak{g}\otimes\mathfrak{g}\to\mathfrak{g}$.
The Lie-theoretic exponential map $\exp:\mathfrak{g}\to G$ corresponds
to the choice of the trivial linear map
$\mathfrak{g}\otimes\mathfrak{g}\to\mathfrak{g}$.

Although exponential maps $\exp^{\nabla,j}:L/A\to\groupoid{L}/\groupoid{A}$
can be defined geometrically on a small neighborhood of the zero section of $L/A$,
what we really need in the present paper are \emph{formal exponential maps}
identifying the formal neighborhood of the zero section of $L/A$
to the formal neighborhood of $M$ in $\groupoid{L}/\groupoid{A}$.
In the real case, these formal exponential maps
arise as the $\infty$-jet of $\exp^{\nabla, j}:L/A\to\groupoid{L}/\groupoid{A}$
in the direction of the fibers and along the zero section.
We prove that they admit a purely algebraic definition,
which still makes sense for pairs of complex Lie algebroids
despite the loss of the groupoid picture (see~\cite{MR2285039}) --- this is the second step 
in the proof of Theorem~\ref{Amsterdam}.

At this point, it is useful to recall the relation between the Lie-theoretic exponential map
$\exp:\mathfrak{g}\to G$ and its $\infty$-jet,
the symmetrization map $S(\mathfrak{g})\to\enveloping{\mathfrak{g}}$.
The coalgebra $D'_0(\frakg)$ of distributions on $\frakg$ with support $\{0\}$
is canonically isomorphic, as a coalgebra, to $S(\frakg)$,
while the coalgebra $D'_e(G)$ of distributions on $G$ with support $\{e\}$
is canonically isomorphic, as a coalgebra, to $\enveloping{\frakg}$.
Since $\exp:\frakg\to G$ is a local diffeomorphism near $0$,
it can be used to push distributions forward from the Lie algebra to the Lie group.
The induced isomorphism of coalgebras $S(\frakg)\xto{\simeq}\enveloping{\frakg}$
is precisely the symmetrization map
\begin{equation}\label{eq:sym}
X_1\odot\cdots\odot X_n\mapsto\frac{1}{n!}\sum_{\sigma\in S_n}X_{\sigma (1)}\cdots X_{\sigma (n)}
.\end{equation}
The above construction can be extended to the exponential map
$\exp^{\nabla, j}:L/A\to\groupoid{L}/\groupoid{A}$.
The space of fiberwise distributions on the vector bundle
$\pi: L/A\to M$ with support the zero section
can be identified, as a $C^\infty(M)$-coalgebra, to $\sections{S(L/A)}$,
while the space of fiberwise distributions on the fiber bundle $s:\groupoid{L}/\groupoid{A}\to M$
with support the unit space $M$ can be identified, as a $C^\infty(M)$-coalgebra, to
$\tfrac{\mathcal{U}(L)}{\mathcal{U}(L)\sections{A}}$.
Pushing distributions forward through the exponential map
$\exp^{\nabla, j}:L/A\to\groupoid{L}/\groupoid{A}$,
which identifies tubular neighborhoods of $M$ in $L/A$ and $\groupoid{L}/\groupoid{A}$,
we obtain an isomorphism of $C^\infty(M)$-coalgebras
$\sections{S(L/A)}\to \tfrac{\mathcal{U}(L)}{\mathcal{U}(L)\sections{A}}$,
which we elect to call \emph{Poincaré--Birkhoff--Witt map}.
It generalizes the symmetrization map \eqref{eq:sym} in
Lie theory to the context of Lie pairs.

Indeed we prove the following
\begin{introthm}\label{Copenhagen}
Let $(L,A)$ be a Lie pair, either real or complex, over a base manifold $M$.
Each choice of (i) a splitting $j: L/A\to L$ of the short exact sequence
of vector bundles $0\to A\to L\to L/A\to 0$
and (ii) an $L$-connection $\nabla$ on $L/A$ determines an isomorphism of $C^\infty(M)$-coalgebras
$\pbw^{\nabla,j}:\sections{S(L/A)}\to\tfrac{\mathcal{U}(L)}{\mathcal{U}(L)\sections{A}}$.
This map $\pbw^{\nabla,j}$ is characterized purely algebraically
by a recursive relation -- see Theorem~\ref{Nairobi}.
\end{introthm}

Theorem~\ref{Amsterdam} is proved using the recursive relation
defining the map $\pbw^{\nabla,j}$ appearing in Theorem~\ref{Copenhagen}.
This is discussed in Sections~\ref{Paris} and~\ref{Olten}.

For a Lie algebroid $A$ and an $A$-module $E$,
the Chevalley--Eilenberg differentials on $\sections{\Lambda^\bullet A^\vee}$
and $\sections{\Lambda^\bullet A^\vee\otimes E}$
can be reinterpreted as a pair of homological vector fields
on the pair of graded manifolds $A[1]$ and $A[1]\oplus E$
making the latter into a Kapranov dg-manifold,
which we call the \emph{linear} Kapranov dg-manifold associated with the $A$-module $E$.
A Kapranov dg-manifold $A[1]\oplus E$ is said to be \emph{linearizable}
if it is isomorphic to the linear Kapranov dg-manifold
associated with some $A$-module structure on $E$.

One might wonder whether the Kapranov dg-manifold $A[1]\oplus L/A$
arising from a Lie pair $(L,A)$ as in Theorem~\ref{Amsterdam} is linearizable or not.
Our next result gives a complete answer to this natural question.

\pagebreak[2]
\begin{introthm}\label{Dublin}
\strut
\begin{enumerate}
\item For a Lie pair $(L,A)$, the Kapranov dg-manifold structure on
$A[1]\oplus L/A$ of Theorem~\ref{Amsterdam} is linearizable
if and only if the Atiyah class $\alpha_{L/A}$ vanishes.
\item
In the case of Lie pairs over $\RR$,
the Atiyah class $\alpha_{L/A}$ vanishes if and only if
there exists a fiber-preserving diffeomorphism
$\phi: L/A\to \groupoid{L}/\groupoid{A}$
from a tubular neighborhood of the zero section of $L/A$
to a tubular neighborhood of the unit section of
$\groupoid{L}/\groupoid{A}$,
which fixes the submanifold $M$,
and whose fiberwise $\infty$-jet along $M$ intertwines
the fiberwise $\infty$-jets along $M$ of the $A$-actions on $L/A$
and on $\groupoid{L}/\groupoid{A}$, respectively.
\end{enumerate}
\end{introthm}

As an application, we consider the Lie pair
$(L=T_X\otimes \CC,A=T^{0, 1}_X)$
arising from a complex manifold $X$.
Theorem~\ref{Amsterdam} yields a Kapranov dg-manifold structure
on $T^{0,1}_X[1]\oplus T^{1,0}_X$,
whose homological vector field involves the Atiyah cocycle as its second-order term.
When $X$ is a Kähler manifold, we recover the classical result of Kapranov~\cite{MR1671737}.
Furthermore, it follows from Theorem~\ref{Dublin}
that the Kapranov dg-manifold $T^{0,1}_X[1]\oplus T^{1, 0}_X$
is linearizable if and only if the Atiyah class of $X$ vanishes.

A few remarks are in order regarding the relations
between the present paper and the existing literature.
Our Kapranov dg-manifolds should not be confused with
those dg-manifolds studied by Ciocan-Fontanine 
and Kapranov~\cite{MR1801413,MR1839580,MR2496057} in derived geometry,
where (unlike here) algebras are required to be negatively graded.
A Kapranov dg-manifold $A[1]\oplus E$, when considered as a vector bundle over $A[1]$,
is \emph{not} a $Q$-bundle in the sense of Kotov--Strobl~\cite{MR3293862},
for the dg-structure is not compatible with the linear structure.
After~\cite{MR2989383} appeared, we learned from Bressler~\cite{Bressler_private_communication}
that he had obtained results similar to some of ours.
Recently, Blom--Posthuma studied the dual of the $\pbw$ map
and found beautiful applications to index theory \cite{arXiv:1512.07863}.
Vitagliano studied various homotopy algebra structures associated with foliations
(a special case of Lie algebroid pairs) \cite{MR3277952,MR3313214,MR3300319}
using PBW maps.
We would also like to point out that the present paper is related to Calaque's work~\cite{MR3217749}.
However, we warn the reader that `PBW' does not have the same meaning here as in~\cite{MR3217749}.
In the present paper, `PBW isomorphism' means an explicit $R$-linear isomorphism
whose construction depends only on the choice of a connection (and a splitting), and always exists
just as is the case for (the inverse of) the complete symbol map \cite{MR1687747}.
On the other hand, `PBW theorem' in~\cite{MR3217749} means an isomorphism of filtered ${A}$-modules,
which exists only when the Atiyah class vanishes.
The `PBW theorem' in~\cite{MR3217749} is equivalent
to linearizability of $A[1]\oplus L/A$ in the present paper
--- compare~\cite[Theorem~5.4]{MR3217749} and Theorem~\ref{Dublin}.

We conclude with a brief summary of a number of further developments posterior to
the posting of the first e-print version of the present paper on arXiv.org.
In~\cite{MR4059952}, Laurent-Gengoux and Voglaire introduced
the notion of Atiyah class of a Lie groupoid pair,
which is related to the Atiyah class of the corresponding Lie algebroid pair by the van~Est functor.
As an application, they sharpened the second part of Theorem~\ref{Dublin}
by showing that, when the Atiyah class vanishes, linearization holds
locally in a neighborhood of the zero section rather than merely `formally' along the zero section.
Their result for Lie pairs over $\RR$ extends to Lie groupoid pairs
a theorem of Bordemann pertaining to Lie group pairs,
namely a consequence of the existence of a connection as stated in~\cite[Proposition~2.5]{MR3014184}.
In~\cite{MR4150934}, Stiénon and Xu constructed a homological vector field
on the graded manifold $L[1]\oplus L/A$ arising from a Lie pair $(L,A)$
using the PBW isomorphism for Lie pairs
constructed in the present paper
(Theorem~\ref{Copenhagen}).
The resulting dg manifold, which Stiénon and Xu elected to call Fedosov dg manifold,
played a fundamental role in establishing a `Gelfand--Kazhdan formal geometry' for Lie pairs
in the sense of Fedosov and, subsequently, in establishing a formality theorem
and a Kontsevich-Duflo type theorem for Lie pairs \cite{arXiv:1901.04602,MR3964152,MR3650387}.
The Kapranov dg manifold $A[1]\oplus L/A$ of Theorem~\ref{Amsterdam}
is a dg submanifold of the Fedosov dg manifold $L[1]\oplus L/A$
constructed in~\cite{MR4150934}.
In~\cite{arXiv:2106.00812}, the $L_\infty[1]$ 
algebra arising from the 
Lie pair encoding an integrable distribution 
$\mathcal{F}$ on a manifold $M$ was compared 
with the Kapranov $L_\infty[1]$ algebra 
associated with the dg manifold 
$(\mathcal{F}[1],d_\mathcal{F})$ .
Finally, Liao and Stiénon \cite{MR3910470} adapted
the construction of the PBW isomorphism for Lie pairs (Theorem~\ref{Copenhagen})
to the context of $\ZZ$-graded manifolds
and then used it to construct `Fedosov resolutions' of $\ZZ$-graded manifolds.
The latter played a fundamental role in proving a formality theorem and
a Kontsevich-Duflo type theorem for dg-manifolds \cite{MR3754617,PolishSurvey}.

\subsection*{Terminology and notations}

We use the symbol $\KK$ to denote either of the fields $\RR$ and $\CC$.
Given a Lie $\KK$-algebroid or a Lie pair,
we use the symbol $R$ to denote the algebra of smooth $\KK$-valued
functions on its base manifold.
Unless specified otherwise, Lie groupoid means \emph{local} Lie groupoid.

\subsubsection*{Shuffles}
A $(p,q)$-shuffle is a permutation $\sigma$ of the set $\{1,2,\cdots,p+q\}$
such that \[ \sigma(1)<\sigma(2)<\cdots<\sigma(p)
\qquad\text{and}\qquad \sigma(p+1)<\sigma(p+2)<\cdots<\sigma(p+q) .\]
The symbol $\shuffle{p}{q}$ denotes the set of all $(p,q)$-shuffles.

\subsubsection*{Grading shift}
Given a graded vector space $V=\bigoplus_{k\in\ZZ}V^{(k)}$,
the notation $V[i]$ denotes the graded
vector space obtained by shifting the grading on $V$ according to the rule
$(V[i])^{(k)}=V^{(i+k)}$. Accordingly, if $E=\bigoplus_{k\in\ZZ}E^{(k)}$
is a graded vector bundle over $M$, then $E[i]$ denotes
the graded vector bundle obtained by shifting the degree in the fibers
of $E$ according to the above rule.

\subsubsection*{Koszul sign}
The Koszul sign $\Koszul{\sigma; v_1, \cdots, v_{n}}$
of a permutation $\sigma$ of homogeneous vectors $v_1,v_2,\dots,v_n$
of a $\ZZ$-graded vector space $V=\bigoplus_{n\in\ZZ}V_n$ is determined by
the equality \[ v_{\sigma(1)}\odot v_{\sigma(2)}\odot\cdots\odot v_{\sigma(n)}
= \Koszul{\sigma; v_1, \cdots, v_n} \ v_1\odot v_2\odot\cdots\odot v_n \]
in the graded commutative algebra $S(V)$.

\subsubsection*{$L_\infty$ algebra}
An $L_\infty[1]$ algebra~\cite{MR1235010,MR1183483,MR2440258}
is a $\ZZ$-graded vector space $V=\bigoplus_{n\in\ZZ}V_n$ endowed with
a sequence $(\lambda_k)_{k=1}^\infty$ of linear maps
$\lambda_k: S^k(V)\to V[1]$ satisfying the generalized Jacobi identities
\[ \sum_{p+q=n}\sum_{\sigma\in\shuffle{p}{q}}
\Koszul{\sigma;v_1,\cdots,v_n}\
\lambda_{1+q}\big(\lambda_p(v_{\sigma(1)},\cdots,v_{\sigma(p)}),
v_{\sigma(p+1)},\cdots,v_{n}\big)=0 \]
for each $n\in\NN$ and for all homogeneous vectors $v_1,v_2,\dots,v_n\in V$.
\newline
A $\ZZ$-graded vector space $V$ is an $L_\infty$ algebra
if and only if $V[1]$ is an $L_\infty[1]$ algebra.

\subsubsection*{Lie--Rinehart algebra}
A Lie--Rinehart algebra $(R,\mathfrak{L})$
consists of
(1) a commutative unital $\KK$-algebra $R$,
(2) a Lie algebra $\mathfrak{L}$ over $\KK$
endowed with an additional structure of left $R$-module,
and (3) an action $\rho:\mathfrak{L}\to\Der(R)$
of $\mathfrak{L}$ from the left on $R$ by derivations.
These various structures must satisfy the compatibility conditions
\[ \lie{X}{fY}=\rho_X(f) Y+f\lie{X}{Y}
\qquad\text{and}\qquad
\rho_{fX}(g)=f\rho_X(g) ,\]
for all $f,g\in R$ and $X,Y\in\mathfrak{L}$.

\subsubsection*{Multiplication in a groupoid}
We adopt the following convention for the multiplication
in a groupoid $\mathscr{L}\rightrightarrows M$
with source map $\source:\groupoid{L}\to M$ and target map $\target:\groupoid{L}\to M$:
given two elements $g$ and $h$ of $\mathscr{L}$,
their product $gh$ is defined only if the target of $g$ coincides with the source of $h$,
i.e.\ if $t(g)=s(h)$ in $M$.
With this convention, we have $s(gh)=s(g)$ and $t(gh)=t(h)$.
Hence, left translation by $g$ maps the source-fiber $\source^{-1}\big(\target(g)\big)$ to
the source-fiber $\source^{-1}\big(\source(g)\big)$.
Consequently, the left invariant vector fields on $\groupoid{L}$
are necessarily tangent to the fibers of the source map.

\subsection*{Acknowledgments}
We would like to thank several institutions for their hospitality,
which allowed the completion of this project:
Pennsylvania State University (Laurent-Gengoux), Université de Lorraine (Stiénon),
and Université Paris Diderot (Xu).
We also wish to thank Martin Bordemann, Paul Bressler, Hessel Posthuma,
Jim Stasheff, and Yannick Voglaire for useful discussions and comments.
We are grateful to the anonymous referee for many insightful comments and suggestions
which led to significant improvements in exposition.

\section{Poincaré--Birkhoff--Witt isomorphisms}

\subsection{Representations of Lie algebroids}
Let $M$ be a smooth manifold, let $A\to M$ be a Lie $\KK$-algebroid
with anchor map $\anchor:A\to T_M\otimes_{\RR}\KK$,
and let $E\to M$ be a vector bundle over $\KK$.
The algebra of smooth functions on $M$ with values in $\KK$ will be denoted $R$.
A \emph{representation of the Lie algebroid} $A$ on the vector bundle $E\to M$ is
a flat (Lie algebroid) $A$-connection $\nabla$ on $E$.
A vector bundle endowed with a representation of the Lie algebroid $A$ is called an \emph{$A$-module}.
By extension, any left $R$-module endowed
with a representation of the Lie--Rinehart algebra (see~\cite{MR154906})
$\big(R,\sections{A}\big)$ will also be called an $A$-module.

Let $(L,A)$ be a Lie pair, i.e.\ an inclusion $A\into L$ of Lie algebroids
over the same base manifold.
The \emph{Bott representation} of $A$ on the quotient $L/A$ is the flat $A$-connection
on $L/A$ defined by
\begin{equation}\label{Istambul}
\nabla^{\Bott}_a \big(q(l)\big)=q\big(\lie{a}{l}\big),
\quad\forall a\in\sections{A},l\in\sections{L} ,\end{equation}
where $q$ denotes the canonical projection $L\onto L/A$.
Therefore, the quotient $L/A$ is an $A$-module.

\subsection{Universal enveloping algebra of a Lie algebroid}

Let $L$ be a Lie $\KK$-algebroid over a smooth manifold $M$
and let $R$ denote the algebra of smooth functions on $M$ taking values in $\KK$.
By $\enveloping{L}$, we denote its universal enveloping algebra \cite{MR2653938,MR1687747}.
Essentially, $\enveloping{L}$ is the quotient of the (reduced) tensor algebra
$\bigoplus_{n=1}^{\infty}\big(R\oplus\sections{L}\big)^{\otimes n}$
of the $\KK$-module $R\oplus\sections{L}$ by the two-sided ideal
generated by the elements of the following four types:
\begin{align*}
& X\otimes Y-Y\otimes X-\lie{X}{Y} && f\otimes X-fX \\
& X\otimes g-g\otimes X-\anchor(X)(g) && f\otimes g-fg
\end{align*}
for all $X,Y\in\sections{L}$ and $f,g\in C^\infty(M)$.

There is a natural filtration \cite{MR154906}:
\begin{equation}
\label{Jakarta}
R\into \mathcal{U}^{\leqslant 1}(L) \into \mathcal{U}^{\leqslant 2}(L)
\into \mathcal{U}^{\leqslant 3}(L) \into \cdots
\end{equation}

When the base $M$ is the one-point space $\{*\}$ so that $L$ is a Lie algebra,
$\enveloping{L}$ is the usual enveloping algebra of the Lie algebra ${L}$.
When the Lie algebroid $L$ is the tangent bundle $T_M$,
its universal enveloping algebra $\enveloping{L}$ is the algebra of differential operators on $M$.
In general, the universal enveloping algebra $\enveloping{L}$ of the Lie algebroid $L$
associated with a Lie groupoid $\groupoid{L}$ is canonically identified with the associative algebra
of source-fiberwise differential operators on $\cinf{\groupoid{L}}$
invariant under left translations\footnote{See `Terminology and Notations'
at the end of the Introduction for our groupoid multiplication
convention.}~\cite{MR2653938,MR1687747}.

The universal enveloping algebra $\enveloping{L}$
of the Lie algebroid $L\to M$ is an $R$-coalgebra~\cite{MR1815717}.
Its comultiplication
\[ \Delta:\enveloping{L}\to\enveloping{L}\otimes_R\enveloping{L} \]
is compatible with its filtration~\eqref{Jakarta} and characterized by the identities
\begin{gather*}
\Delta(1)=1\otimes 1; \\
\Delta(x)=1\otimes x+x\otimes 1, \quad \forall x\in \sections{L}; \\
\Delta(u\cdot v)=\Delta(u)\cdot\Delta(v), \quad \forall u,v\in\enveloping{L} ,
\end{gather*}
where $1\in R$ denotes the constant function on $M$ with value $1$
while the symbol~$\cdot$ denotes the multiplications in $\enveloping{L}$ 
and $\enveloping{L}\otimes_R\enveloping{L}$.
We refer the reader to~\cite[Equation~(15) and the remark following Definition~3.1]{MR1815717}
for the precise meaning of the last equation above.
Explicitly, we have
\begin{multline*}
\Delta(b_1\cdot b_2\cdot\cdots\cdot b_n)=
1\otimes(b_1\cdot b_2\cdot\cdots\cdot b_n)
+ \sum_{\substack{p+q=n \\ p,q\in\NN}}\sum_{\sigma\in\shuffle{p}{q}}
(b_{\sigma(1)}\cdot\cdots\cdot b_{\sigma(p)}) \otimes
(b_{\sigma(p+1)}\cdot\cdots\cdot b_{\sigma(n)}) \\
+ (b_1\cdot b_2\cdot\cdots\cdot b_n)\otimes 1
,\end{multline*}
for all $b_1,\dots,b_n\in\sections{L}$.

\subsection{The coalgebras \texorpdfstring{$\sections{S(L/A)}$}{Γ(S(L/A))}
and \texorpdfstring{$\frac{\enveloping{L}}{\enveloping{L}\sections{A}}$}{U(L)/U(L)Γ(A)}}

Let $(L,A)$ be a Lie pair, i.e.\ an inclusion $A\into L$
of Lie $\KK$-algebroids over the same base manifold.

Writing $\enveloping{L}\sections{A}$ for the left ideal of $\enveloping{L}$
generated by $\sections{A}$, the quotient
$\frac{\enveloping{L}}{\enveloping{L}\sections{A}}$
is automatically a filtered $R$-coalgebra since
\[ \Delta\big(\enveloping{L}\sections{A}\big)\subseteq \enveloping{L}\otimes_R
\big(\enveloping{L}\sections{A}\big) + \big(\enveloping{L}\sections{A}\big)
\otimes_R \enveloping{L} \]
and the filtration~\eqref{Jakarta} on $\enveloping{L}$ descends
to a filtration of $\frac{\enveloping{L}}{\enveloping{L}\sections{A}}$:
\begin{equation} \label{Kolkata}
R\into \left(\frac{\enveloping{L}}{\enveloping{L}\sections{A}}\right)^{\leqslant 1}
\into \left(\frac{\enveloping{L}}{\enveloping{L}\sections{A}}\right)^{\leqslant 2}
\into \left(\frac{\enveloping{L}}{\enveloping{L}\sections{A}}\right)^{\leqslant 3} \into \cdots
\end{equation}

Likewise, deconcatenation defines a graded $R$-coalgebra structure on 
\[ \sections{S(L/A)}=\bigoplus_{n\geqslant 0}\sections{S^n(L/A)} .\]
The comultiplication
\[ \Delta:\sections{S(L/A)}\to\sections{S(L/A)}\otimes_R\sections{S(L/A)} \] is given by
\begin{multline}\label{London}
\Delta(b_1\odot b_2\odot\cdots\odot b_n) \\
= 1\otimes(b_1\odot b_2\odot\cdots\odot b_n)
+ \sum_{\substack{p+q=n \\ p,q\in\NN}}\sum_{\sigma\in\shuffle{p}{q}}
(b_{\sigma(1)}\odot\cdots\odot b_{\sigma(p)})
\otimes (b_{\sigma(p+1)}\odot\cdots\odot b_{\sigma(n)}) \\
+ (b_1\odot b_2\odot\cdots\odot b_n)\otimes 1
,\end{multline}
for all $b_1,\dots,b_n\in\sections{L/A}$.
Here, the symbol~$\odot$ denotes the symmetric product in $\sections{S(L/A)}$.

Note that there is an obvious filtration on $\sections{S(L/A)}$:
\begin{equation} \label{Pune}
R\into \sections{S^{\leqslant 1}(L/A)}
\into \sections{S^{\leqslant 2}(L/A)}
\into \sections{S^{\leqslant 3}(L/A)} \into \cdots
\end{equation}

\subsection{Poincaré--Birkhoff--Witt isomorphism}
\label{Madrid}

For every Lie algebra $\mathfrak{g}$,
the classical Poincaré--Birkhoff--Witt theorem asserts that,
for every $n\in\NN$, the vector space
$\Gr^n\big(\enveloping{\mathfrak{g}}\big):=
\frac{\mathcal{U}^{\leqslant n}(\mathfrak{g})}{\mathcal{U}^{\leqslant n-1}(\mathfrak{g})}$
is canonically isomorphic to $S^n(\mathfrak{g})$.
In fact, we have a stronger result:
the symmetrization map \eqref{eq:sym}
is an isomorphism of filtered coalgebras
from $S(\mathfrak{g})$ to $\enveloping{\mathfrak{g}}$,
which lifts the canonical isomorphism of
graded vector spaces $S(\mathfrak{g})\to
\Gr\big(\enveloping{\mathfrak{g}}\big)$.

Given a Lie--Rinehart algebra $(R,\mathfrak{L})$,
in general, there does not exist a symmetrization map from $S_R(\mathfrak{L})$,
the symmetric algebra of $\mathfrak{L}$ (over $R$), to $\enveloping{R,\mathfrak{L}}$,
the universal enveloping algebra of the Lie--Rinehart algebra $(R,\mathfrak{L})$.
However, there is a well-defined symmetrization map
\begin{equation}\label{Rinehart_sym}
S_R(\mathfrak{L})\to\Gr\big(\enveloping{R,\mathfrak{L}}\big)
\end{equation}
on the level of the associated graded vector spaces.
Rinehart proved that, provided $\mathfrak{L}$ is $R$-projective,
the symmetrization map \eqref{Rinehart_sym}
is an isomorphism of graded $R$-algebras \cite[Theorem~3.1]{MR154906}.

The next theorem provides a replacement for the missing symmetrization map
between filtered $R$-coalgebras in the context of Lie algebroids
--- more precisely, in the even more general context of Lie pairs.

Let $(L,A)$ be a Lie pair.
We will use the symbol $\bm{1}$ to denote
the image in $\frac{\enveloping{L}}{\enveloping{L}\sections{A}}$
of the constant function $1\in R$ under the canonical map
$R\into\enveloping{L}\onto\frac{\enveloping{L}}{\enveloping{L}\sections{A}}$.
We note that $\frac{\enveloping{L}}{\enveloping{L}\sections{A}}$
is naturally a left $\enveloping{L}$-module.
An $L$-connection $\nabla$ on $L/A$ induces
an $L$-connection on $S(L/A)$,
which we still denote $\nabla$ by abuse of notation:
\begin{equation}\label{Edmonton}
\nabla_l(b_1\odot\cdots\odot b_n)
=\sum_{i=1}^{n} b_1\odot\cdots\odot \nabla_l b_i\odot
\cdots\odot b_n ,\end{equation}
for all $l\in\sections{L}$, $n\in\NN$
and $b_1,\dots,b_n\in\sections{L/A}$.
It is understood that $\nabla_l 1=0$, for all $l\in\sections{L}$.

\begin{theorem}\label{Nairobi}
Let $(L,A)$ be a Lie pair.
Given a splitting $j:L/A\to L$ of the short exact sequence
\begin{equation}\label{Sydney}
\begin{tikzcd} 0 \arrow[r] & A \arrow[r, "i"] & L \arrow[r, "q"] & L/A \arrow[r] & 0 \end{tikzcd}
\end{equation}
and an $L$-connection $\nabla$ on $L/A$,
there exists a unique isomorphism of filtered $R$-coalgebras
\[ \pbw^{\nabla,j}:\sections{S(L/A)}\to\tfrac{\enveloping{L}}{\enveloping{L}\sections{A}} \]
satisfying
\begin{gather}
\pbw^{\nabla,j}(1)=\bm{1}; \label{Ottawa} \\
\pbw^{\nabla,j}(b)=j(b)\cdot\bm{1}, \quad\forall b\in\sections{L/A}; \label{Pittsburgh}
\end{gather}
and, for all $n\in\NN$ and $b\in\sections{L/A}$,
\begin{equation}
\pbw^{\nabla,j}(b^{n+1})=j(b)\cdot\pbw^{\nabla,j}(b^n)
-\pbw^{\nabla,j}\big(\nabla_{j(b)}(b^n)\big) \label{Quito}
.\end{equation}
\end{theorem}

Equation~\eqref{Quito} is equivalent to
\begin{multline}\label{Rome}
\pbw^{\nabla,j}(b_0 \odot \cdots \odot b_n) =\tfrac{1}{n+1}\sum_{i=0}^{n} \Big( j(b_i) \cdot
\pbw^{\nabla,j}(b_0 \odot \cdots \odot \widehat{b_i} \odot \cdots \odot b_n ) \\ -
\pbw^{\nabla,j}\big(\nabla_{j(b_i)} ( b_0 \odot \cdots \odot \widehat{b_i} \odot
\cdots \odot b_n ) \big) \Big)
\end{multline}
for all $b_0,\dots,b_n \in \sections{L/A}$.

\begin{remark}
Equation~\eqref{Rome} is similar to an equation obtained by Bordemann in~\cite[page~78]{MR3014184}
when $L$ is a Lie algebra and $A$ is a Lie subalgebra.
\end{remark}

The remainder of this section is devoted to the proof of Theorem~\ref{Nairobi}.

It is immediate that Equations~\eqref{Ottawa}, \eqref{Pittsburgh}, and \eqref{Rome}
together define inductively a unique $R$-linear map $\pbw^{\nabla,j}$.
Furthermore, it is simple to check that this map $\pbw^{\nabla,j}$ respects the filtrations
\eqref{Pune} and~\eqref{Kolkata}.

\begin{proposition}
The filtered $R$-linear map $\pbw^{\nabla,j}$ is a morphism of coalgebras:
\begin{equation}\label{Riga}
\Delta\circ\pbw^{\nabla,j} = (\pbw^{\nabla,j}\otimes\pbw^{\nabla,j})\circ\Delta
.\end{equation}
\end{proposition}

\begin{proof}
By the definition of the comultiplication $\Delta$, we have
\begin{equation}\label{arctic} \Delta(b^n)=
\sum_{\substack{p+q=n \\ p\geqslant 0 \\ q\geqslant 0}}
\frac{n!}{p!\; q!}b^p\otimes b^q ,\end{equation}
for all $b\in\sections{L/A}$ and $n\in\NN$.

It is clear that for all $l\in\sections{L}$,
the operator $\nabla_l$ is a coderivation of the $R$-coalgebra $\sections{S(L/A)}$.
Consequently, we have
\begin{equation}\label{tern} \begin{split}
\Delta\big(\nabla_{j(b)}(b^n)\big) &=
(\nabla_{j(b)}\otimes\id+\id\otimes\nabla_{j(b)})\circ\Delta(b^n) \\
&= (\nabla_{j(b)}\otimes\id+\id\otimes\nabla_{j(b)})
\Big( \sum_{p=0}^n
\binom{n}{p}b^p\otimes b^{n-p} \Big) \\
&= \sum_{p=0}^n \binom{n}{p} \big(\nabla_{j(b)}(b^p)\otimes b^{n-p}
+ b^p\otimes\nabla_{j(b)}(b^{n-p})\big)
.\end{split} \end{equation}

We reason by induction on $n$
to prove that Equation~\eqref{Riga} holds on $\sections{S^n(L/A)}$, for all $n\in\NN$.

Obviously, the claim holds for $n=0$ and $n=1$ since
\[ \Delta\circ\pbw^{\nabla,j}(1)=\Delta(\bm{1})=\bm{1}\otimes\bm{1}
=\pbw^{\nabla,j}(1)\otimes\pbw^{\nabla,j}(1)
=(\pbw^{\nabla,j}\otimes\pbw^{\nabla,j})\circ\Delta(1) \]
and
\begin{multline*} \Delta\circ\pbw^{\nabla,j}(b)=\Delta\big(j(b)\cdot\bm{1}\big)
=j(b)\cdot\bm{1}\otimes\bm{1}+\bm{1}\otimes j(b)\cdot\bm{1} \\
=\pbw^{\nabla,j}(b)\otimes\pbw^{\nabla,j}(1)+\pbw^{\nabla,j}(1)\otimes\pbw^{\nabla,j}(b) \\
=(\pbw^{\nabla,j}\otimes\pbw^{\nabla,j})(b\otimes 1+1\otimes b)
=(\pbw^{\nabla,j}\otimes\pbw^{\nabla,j})\circ\Delta(b) ,\end{multline*}
for all $b\in\sections{L/A}$.
Assuming that Equation~\eqref{Riga} holds on $\sections{S^n(L/A)}$,
it suffices to show that
\[ \Delta\circ\pbw^{\nabla,j}(b^{n+1})=
(\pbw^{\nabla,j}\otimes\pbw^{\nabla,j})\circ\Delta(b^{n+1}),
\qquad\forall b\in\sections{L/A}, \]
to complete the induction.

Using Equation~\eqref{Quito}; the induction hypothesis;
Equation~\eqref{arctic}; and Equation~\eqref{tern}, we obtain
\begin{align*}
& \Delta\circ\pbw^{\nabla,j}(b^{n+1}) \\
&= \Delta\Big( j(b)\cdot\pbw^{\nabla,j}(b^n) - \pbw^{\nabla,j}\big(\nabla_{j(b)}(b^n)\big) \Big) \\
&= \Delta\big(j(b)\big)\cdot\Delta\big(\pbw^{\nabla,j}(b^n)\big)
- \Delta\circ\pbw^{\nabla,j}\big(\nabla_{j(b)}(b^n)\big) \\
&= \Delta\big(j(b)\big)\cdot(\pbw^{\nabla,j}\otimes\pbw^{\nabla,j})\big(\Delta(b^n)\big)
- (\pbw^{\nabla,j}\otimes\pbw^{\nabla,j})\circ\Delta\big(\nabla_{j(b)}(b^n)\big) \\
&= \big(j(b)\otimes 1+1\otimes j(b)\big)\cdot
\Big(\sum_{p=0}^n\binom{n}{p}
\pbw^{\nabla,j}(b^p)\otimes \pbw^{\nabla,j}(b^{n-p})\Big) \\
&\quad - \sum_{p=0}^n
\binom{n}{p} \pbw^{\nabla,j}\big(\nabla_{j(b)}(b^p)\big) \otimes \pbw^{\nabla,j}(b^{n-p}) \\
&\quad - \sum_{p=0}^n
\binom{n}{p} \pbw^{\nabla,j}(b^p)\otimes\pbw^{\nabla,j}\big(\nabla_{j(b)}(b^{n-p})\big) \\
&= \sum_{p=0}^n\binom{n}{p}
\Big(j(b)\cdot\pbw^{\nabla,j}(b^p)\otimes\pbw^{\nabla,j}(b^{n-p})
+\pbw^{\nabla,j}(b^p)\otimes j(b)\cdot\pbw^{\nabla,j}(b^{n-p}) \\
&\quad\qquad -\pbw^{\nabla,j}\big(\nabla_{j(b)}(b^p)\big) \otimes \pbw^{\nabla,j}(b^{n-p})
-\pbw^{\nabla,j}(b^p) \otimes \pbw^{\nabla,j}\big(\nabla_{j(b)}(b^{n-p})\big) \Big) \\
&= \sum_{p=0}^n \binom{n}{p}
\Big( \pbw^{\nabla,j}(b^{p+1})\otimes \pbw^{\nabla,j}(b^{n-p})
+\pbw^{\nabla,j}(b^p)\otimes \pbw^{\nabla,j}(b^{n-p+1}) \Big) \\
&= (\pbw^{\nabla,j}\otimes\pbw^{\nabla,j})
\Big( \sum_{p=0}^n
\binom{n}{p}(b^{p+1}\otimes b^{n-p}+b^p\otimes b^{n-p+1}) \Big) \\
&= (\pbw^{\nabla,j}\otimes\pbw^{\nabla,j})
\Big( \sum_{p=0}^{n+1}
\binom{n+1}{p} b^p\otimes b^{n+1-p} \Big) \\
&= (\pbw^{\nabla,j}\otimes\pbw^{\nabla,j})\circ\Delta(b^{n+1})
.\qedhere\end{align*}
\end{proof}

The remainder of this section is devoted to the proof of Theorem~\ref{Nairobi},
more precisely the claim that $\pbw^{\nabla,j}$
is an \emph{isomorphism} of filtered $R$-coalgebras.

\begin{lemma}\label{Stockholm}
For all $n\in\NN$ and $b_1,\dots,b_n\in\sections{L/A}$,
\[ \pbw^{\nabla,j}(b_1\odot\cdots\odot b_n)
-\left(\frac{1}{n!}\sum_{\sigma\in S_n}
j(b_{\sigma(1)})\cdot j(b_{\sigma(2)})\cdot\cdots\cdot
j(b_{\sigma(n)})\right)\cdot\bm{1} \]
is an element of
$\left(\frac{\enveloping{L}}{\enveloping{L}\sections{A}}\right)^{\leqslant n-1}$.
\end{lemma}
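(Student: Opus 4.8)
The plan is to prove the statement by induction on $n$, exploiting the recursive definition of $\pbw^{\nabla,j}$ given by Equations~\eqref{Ottawa}, \eqref{Pittsburgh}, and~\eqref{Rome}. The base cases $n=0$ and $n=1$ are immediate: for $n=0$ the claim reads $\pbw^{\nabla,j}(f)-f=0$ and for $n=1$ it reads $\pbw^{\nabla,j}(b)-j(b)=0$, both of which hold exactly by the defining equations, so the difference lies in the (here trivial) filtration piece. The inductive hypothesis is that, for all symmetric products of fewer than $n$ sections of $L/A$, the difference between $\pbw^{\nabla,j}$ and the naive symmetrized lift $\frac{1}{m!}\sum_{\sigma\in S_m} j(b_{\sigma(1)})\cdots j(b_{\sigma(m)})$ lands in filtration degree $\leqslant m-1$.

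For the inductive step, I would apply the iteration formula~\eqref{Rome} to $b_0\odot\cdots\odot b_n$ ($n+1$ factors) and treat its two sums separately. In the first sum, each term $j(b_i)\cdot\pbw^{\nabla,j}(b_0\odot\cdots\odot\widehat{b_i}\odot\cdots\odot b_n)$ has, by the inductive hypothesis, the same leading symbol as $j(b_i)\cdot\frac{1}{n!}\sum j(b_{\sigma(0)})\cdots j(b_{\sigma(n)})$ (product over the remaining $n$ indices), with error in filtration degree $\leqslant n-1$; left multiplication by $j(b_i)\in\sections{L}$ raises filtration degree by at most one, so the error stays in degree $\leqslant n-1$. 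Averaging $\frac{1}{n+1}\sum_{i=0}^n$ over the reinserted index then reconstructs exactly the fully symmetrized product $\frac{1}{(n+1)!}\sum_{\tau\in S_{n+1}} j(b_{\tau(0)})\cdots j(b_{\tau(n)})$, which is the desired leading term. The second sum involves $\pbw^{\nabla,j}$ applied to $\nabla_{j(b_i)}(b_0\odot\cdots\odot\widehat{b_i}\odot\cdots\odot b_n)$, which by the Leibniz rule for the connection $\nabla$ is again a symmetric product of exactly $n$ sections of $L/A$; hence by the inductive hypothesis $\pbw^{\nabla,j}$ of it already lies in $\left(\frac{\enveloping{L}}{\enveloping{L}\sections{A}}\right)^{\leqslant n}$, and after the $\frac{1}{n+1}$ prefactor it is absorbed harmlessly into the filtration-$\leqslant n-1$ remainder relative to the $(n+1)$-fold symmetrization.

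The main obstacle I anticipate is the bookkeeping in the first sum: one must verify that reinserting the index $i$ into the symmetrization over the remaining $n$ indices and averaging over $i$ genuinely produces the full symmetrization over all $n+1$ indices \emph{up to lower-order terms}, rather than merely a formal average. This requires commuting $j(b_i)$ past the leading symbol of the $\pbw$ term, and the commutators $[j(b_i),j(b_k)]$ — which differ from $j([\cdot,\cdot])$ by the curvature of $j$ and land in $\enveloping{L}$ of degree $\leqslant 1$ — must be checked to contribute only to filtration degree $\leqslant n-1$. Since we are comparing leading symbols in $\Gr\big(\frac{\enveloping{L}}{\enveloping{L}\sections{A}}\big)$, which is graded commutative, the order of the factors $j(b_i)$ is immaterial modulo the ideal and modulo lower filtration, so these commutator corrections are indeed negligible. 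Once this is established, Rinehart's isomorphism $\sym:\sections{S(L)}\to\Gr(\enveloping{L})$ guarantees that the leading symbol of the symmetrized product is nonzero precisely in degree $n+1$, confirming that the residual difference sits in filtration degree $\leqslant n$, as claimed.
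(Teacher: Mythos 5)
Your proof is correct and takes essentially the same route as the paper's: both induct on the number of factors via the iteration formula~\eqref{Rome}, absorb the connection terms $\pbw^{\nabla,j}\big(\nabla_{j(b_i)}(\cdots)\big)$ into the lower filtration piece, and recover the full symmetrization by reinserting the index $i$ and averaging --- an identity which in fact holds exactly in $\enveloping{L}$, so your anticipated obstacle about the commutators $[j(b_i),j(b_k)]$ never arises (though your resolution of it, that such corrections drop filtration degree, is also valid). The only blemishes are harmless off-by-one slips in the bookkeeping: the inductive hypothesis must cover products of $n$ (not fewer than $n$) factors, and after left multiplication by $j(b_i)$ the error lies in filtration degree $\leqslant n$ rather than $\leqslant n-1$ --- which is still exactly the bound the lemma demands for $n+1$ factors.
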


\begin{proof}
It follows from the inductive relation \eqref{Rome} that
\begin{multline*} \pbw^{\nabla,j}(b_1\odot\cdots\odot b_n)
-\frac{1}{n}\sum_{k=1}^n j(b_k)\cdot\pbw^{\nabla,j}(b_1\odot\cdots
\odot\widehat{b_k}\odot\cdots\odot b_n) \\
=-\frac{1}{n}\sum_{k=1}^n \pbw^{\nabla,j}\big(\nabla_{j(b_k)}(b_1\odot\cdots
\odot\widehat{b_k}\odot\cdots\odot b_n)\big) \end{multline*}
belongs to $\left(\frac{\enveloping{L}}{\enveloping{L}\sections{A}}\right)^{\leqslant n-1}$ as
$ \nabla_{j(b_k)}(b_1\odot\cdots\odot\widehat{b_k}\odot\cdots\odot b_n)\in \sections{S^{n-1}(L/A)}$
and $\pbw^{\nabla,j}$ respects the filtrations.
The result follows by induction on $n$.
\end{proof}

\begin{proof}[Proof of Theorem~\ref{Nairobi}]
It remains to prove that $\pbw^{\nabla,j}$ is an isomorphism.
For this purpose, since the filtration is bounded below,
it suffices to prove that the induced map
between the associated graded vector spaces
\[ \Gr(\pbw^{\nabla,j}):\sections{S(L/A)}\to
\Gr\left(\frac{\enveloping{L}}{\enveloping{L}\sections{A}}\right) \]
is an isomorphism.

Since $L$ is a Lie algebroid, the pair $(R,\sections{L})$ is a Lie--Rinehart algebra
whose universal enveloping algebra $\enveloping{R,\sections{L}}$ is precisely
the universal enveloping algebra $\enveloping{L}$ of the Lie algebroid $L$.
The symmetrization map \eqref{Rinehart_sym} arising from the Lie--Rinehart algebra $(R,\sections{L})$
yields an isomorphism of graded associative algebras \cite{MR154906}:
\[ \sym:\sections{S(L)}\to\Gr\big(\enveloping{L}\big) .\]

Consider the canonical projection
\[ \chi:\enveloping{L}\to\tfrac{\enveloping{L}}{\enveloping{L}\sections{A}} \]
and the injection
\[ j:\sections{S(L/A)}\to\sections{S(L)} \]
induced by the splitting $j$.
Lemma~\ref{Stockholm} asserts the commutativity of the diagram
\begin{equation}\label{eq:RinehartComm} \begin{tikzcd}[column sep=huge]
\sections{S(L)} \arrow[r, "\sym"] & \Gr\big(\enveloping{L}\big) \arrow[d, two heads, "\Gr(\chi)"] \\
\sections{S(L/A)} \arrow[u, hook, "j"] \arrow[r, "\Gr(\pbw^{\nabla,j})"'] &
\Gr\left(\frac{\enveloping{L}}{\enveloping{L}\sections{A}}\right)
.\end{tikzcd} \end{equation}
The cokernel of $j:\sections{S^n(L/A)}\to\sections{S^n(L)}$ is clearly isomorphic to
the subspace $\sections{S^{n-1}(L)\odot A}$ of $\sections{S^n(L)}$.
Moreover, the identifications $\Gr^n\big(\enveloping{L}\big) = \frac{\mathcal{U}^{\leqslant n}(L)}
{\mathcal{U}^{\leqslant n-1}(L)}$ and
\[ \Gr^n\left(\frac{\enveloping{L}}{\enveloping{L}\cdot\sections{A}}\right) =
\frac{\mathcal{U}^{\leqslant n}(L)}{\mathcal{U}^{\leqslant n-1}(L)\cdot\sections{A}
+\mathcal{U}^{\leqslant n-1}(L)} ,\]
imply that
\[ \ker\big(\Gr^n(\chi)\big) =
\frac{\mathcal{U}^{\leqslant n-1}(L)\cdot\sections{A}
+\mathcal{U}^{\leqslant n-1}(L)}{\mathcal{U}^{\leqslant n-1}(L)} .\]

Since the symmetrization map $\sym:\sections{S(L)}\to\Gr\big(\enveloping{L}\big)$
is a morphism of algebras,
it maps the subspace $\sections{S^{n-1}(L)\odot A}$ of $\sections{S^n(L)}$ to the subspace
$\frac{\mathcal{U}^{\leqslant n-1}(L)\cdot\sections{A}
+\mathcal{U}^{\leqslant n-1}(L)}{\mathcal{U}^{\leqslant n-1}(L)}$
of $\Gr^n\big(\enveloping{L}\big)$.
Thus, the symmetrization map $\sym:\sections{S(L)}\to\Gr\big(\enveloping{L}\big)$
identifies the cokernel of $j$ with the kernel of $\Gr(\chi)$.
Since $j$ is injective, $\sym$ is bijective, $\Gr(\chi)$ is surjective,
it follows from the commutative diagram \eqref{eq:RinehartComm}
that $\Gr(\pbw^{\nabla,j})$ is an isomorphism.
\end{proof}

\begin{remark}
In the special case of Lie pairs over $\RR$,
Theorem~\ref{Madagascar} below will shed light
on the origin of Equations~\eqref{Ottawa},
\eqref{Pittsburgh}, and~\eqref{Quito}.
Indeed, Theorem~\ref{Madagascar} will give a more enlightening proof of Theorem~\ref{Nairobi}
exploiting the close relation between $\pbw^{\nabla,j}$ and exponential maps.
\end{remark}

\section{Exponential maps}
\label{Urumqi}

Throughout this section, we work over the field $\RR$ of real numbers exclusively.
Thus we restrict ourselves to real Lie algebroids.

For any Lie group, there is a canonical exponential map
from the associated Lie algebra to the Lie group itself.
The construction of the exponential map generalizes to Lie groupoids
though in a noncanonical way as a choice of connection is needed~\cite{MR1722129}.
In the particular case of a Lie group $\group{G}$,
one of these connection-induced exponential maps stands out:
the one associated with the trivial connection defined by $\nabla_X Y =0$
for every left-invariant vector fields $X$ and $Y$ on $\group{G}$.

Here, given an inclusion of Lie groupoids $\groupoid{A}\into\groupoid{L}$
(and the corresponding inclusion of Lie algebroids $A\into L$),
we define an exponential map which takes a neighborhood of the zero section of $L/A$
to a neighborhood of the unit section of $\groupoid{L}/\groupoid{A}$.
The definition requires two choices:
(i) a splitting of the short exact sequence of vector bundles \eqref{Sydney}
and (ii) an $L$-connection on $L/A$.

\subsection{\texorpdfstring{$\source$}{s}- and \texorpdfstring{$\anchor$}{ρ}-paths}

We briefly recall two important related classes of paths
in Lie algebroids and groupoids \cite{MR2795150}.
See `Terminology and Notations' at the end of the Introduction 
for our groupoid multiplication convention.
In what follows, the symbol $I$ always denotes some open interval of the real line containing $0$.

Let $\groupoid{L}$ be a Lie groupoid over a manifold $M$ with source $\source:\groupoid{L}\to M$,
target $\target:\groupoid{L}\to M$, and unit $1:M\to\groupoid{L}$.
The subbundle $L=\{ X \in 1^* (T_{\groupoid{L}}) | s_* X=0 \}$
of the pullback of $T_{\groupoid{L}}$ via $1:M\to\groupoid{L}$
carries a Lie algebroid structure whose anchor $\anchor:L\to T_M$ is the (restriction of)
the differential $\target_*:T_{\groupoid{L}}\to T_M$ of the target~\cite{MR2157566}.
We write $\pi$ to denote the bundle projection $L\to M$.

An \emph{$\source$-path} is a smooth curve $\gamma:I\to\groupoid{L}$
originating from a point of the unit submanifold of $\groupoid{L}$
(i.e.\ $\gamma(0)=1_m$ for some $m\in M$) and fully contained in one of the $\source$-fibers
(i.e.\ $\source\circ\gamma(t)=m$ for all $t\in I$).
A \emph{$\anchor$-path} is a smooth curve $\beta:I\to L$ satisfying
\[ \pi_*\big(\beta'(t)\big)=\anchor\big(\beta(t)\big), \quad\forall t\in I .\]
Every $\source$-path $\gamma$ determines a unique $\anchor$-path $\beta$
and vice versa through the initial value problem
\[ \left\{ \begin{aligned}
\beta(t)&=\left.\frac{d}{d\tau} \big(\gamma(t)\big)^{-1}\gamma(\tau) \right|_t ,\\
\gamma(0)&=1_{\pi(\beta(0))}
.\end{aligned} \right. \]

\subsection{Connections as horizontal liftings}
\label{SantaFe}

Let $M$ be a smooth manifold, let $L\to M$ be a Lie algebroid with anchor map $\anchor$,
and let $E\xto{\varpi}M$ be a vector bundle.
A linear $L$-connection on $E$ can be described not only as
a covariant derivative $\nabla:\sections{L}\times\sections{E}\to\sections{E}$
but also as a horizontal lifting.

\begin{definition}
A (linear) $L$-connection on $E$ is a map $L\times_M E\xto{h} T_E$,
called \emph{horizontal lifting}, such that the diagram
\begin{equation}\label{Warsaw}
\begin{tikzcd}[row sep=small] & L \arrow[rr, "\anchor"]\arrow[dd] & & T_M \arrow[dd] \\
L\times_M E \arrow[rr, "h", dashrightarrow, near end, crossing over] \arrow[ru] \arrow[dd]
& & T_E \arrow[ru, "\varpi_*"'] & \\
& M \arrow[rr, "\id", near start] & & M \\
E \arrow[ru, "\varpi"] \arrow[rr, "\id"] & &
E \arrow[from=uu, crossing over] \arrow[ru, "\varpi"'] &
\end{tikzcd}
\end{equation}
commutes and its front and top faces
\[ \begin{tikzcd} L \times_M E \arrow[r, "h", dashrightarrow] \arrow[d] & T_E \arrow[d] \\
E \arrow[r, "\id"'] & E \end{tikzcd}
\qquad\text{and}\qquad
\begin{tikzcd} L \times_M E \arrow[r, "h", dashrightarrow] \arrow[d]
& T_E \arrow[d, "\varpi_*"] \\ L \arrow[r, "\anchor"'] & T_M \end{tikzcd} \]
are vector bundle maps.
In other words, the diagram \eqref{Warsaw} is a morphism of double vector bundles
--- see \cite{MR2157566} --- from the left face of the cube to the right face of the cube.
\end{definition}

The covariant derivative and horizontal lift describing a given connection are
related to one another by the identity
\begin{equation}\label{XiAn}
e_*\big(\anchor(l_m)\big)-h(l_m,e_m)=\tau_{e_m} \big((\nabla_l e)_m\big) ,
\end{equation}
which holds for all $m\in M$, $l\in\sections{L}$, and $e\in\sections{E}$
--- see~\cite[Theorem~12.32]{MR2572292} or~\cite[page~114]{MR0152974}.
Here, $\tau_{e_m}$ denotes the canonical linear isomorphism
between the fiber $E_m$ and its tangent space at the point $e_m$.

We note that the horizontal lifting $h$ induces a morphism
$\sections{L}\ni l\mapsto l^\diamond\in\XX(E)$ of $C^\infty(M)$-modules
taking values in the Lie subalgebra $\XX_{\varpi}(E)$ of
$\varpi$-projectable vector fields making the following diagram commute:
\[ \begin{tikzcd}[row sep=tiny]
& \XX_{\varpi}(E) \arrow[dd, "\varpi_*"] \\
\sections{L} \arrow[ru, "\diamond"] \arrow[rd, "\anchor"'] & \\
& \XX(M)
.\end{tikzcd} \]
For all $l\in\sections{L}$ and $e\in E$, we have
\begin{equation}
\label{gazpacho}
l^\diamond_e=h(l_{\pi(e)},e)
.\end{equation}

When the $L$-connection is flat,
$l\mapsto l^\diamond$ is a Lie algebra morphism.
This point of view will be used in Section~\ref{Patagonia}.

\subsection{Geodesic spray}
Suppose a splitting
\begin{equation}\label{Murmansk}
\begin{tikzcd} 0 \arrow[r] & A \arrow[r, "i"] & L
\arrow[l, bend left, "p"] \arrow[r, "q"] &
L/A \arrow[l, bend left, "j"] \arrow[r] & 0 \end{tikzcd} \end{equation}
of the short exact sequence \eqref{Sydney} and an $L$-connection $\nabla$ on $L/A$ have been chosen.
Together, the connection and the splitting determine
a vector field $\Xi$ on $L/A$:
the value of $\Xi$ at a point $x\in L/A$ is the horizontal lift of the vector $j(x)\in L$.
More precisely,
\begin{equation} \Xi_x=h\big(j(x),x\big), \quad\forall x\in L/A ,\label{Lancaster}\end{equation}
where $h:L\times_M (L/A) \to T_{L/A}$ denotes the horizontal lifting
associated with the $L$-connection on $L/A$ as in~\eqref{Warsaw}.

Let $\pi$ denote the projection $L/A\to M$. Since
\begin{equation} \pi_*(\Xi_x)=\anchor\big(j(x)\big), \quad\forall x\in L/A, \label{York}\end{equation}
the splitting $j:L/A\to L$ maps the integral curves of $\Xi$ in $L/A$
to $\anchor$-paths in the Lie algebroid $L$.
This very property generalizes the classical notion of spray to the context of Lie algebroids.
Furthermore, the integral curves $t\mapsto b(t)$ of $\Xi$
satisfy the differential equation \[ b'(t)=h\big(j(b(t)),b(t)\big) ,\]
which is similar to the usual geodesic equation.
Therefore, we call the vector field $\Xi$
the \emph{geodesic spray} associated with the connection $\nabla$ and the splitting $j$.

\begin{remark}\label{Yerevan}
The flow $t\mapsto\Phi^\Xi_t$ of the geodesic spray satisfies the following important property:
$\Phi^\Xi_t(rx)=r\Phi^\Xi_{rt}(x)$, for all $x\in L/A$ and all real numbers $t,r$ in a sufficiently small open interval around $0$, 
which depends on $x$.
\end{remark}

Let $\sigma$ be a smooth section of $L/A\xto{\pi}M$.
Since the flow of the geodesic spray $\Xi$ does not respect
the fibers of the vector bundle $L/A\xto{\pi}M$,
the image of $\sigma(M)$ under $\Phi^\Xi_t$ (for a fixed numerical value of the time parameter $t$)
is not guaranteed to be the image of another section of $L/A\xto{\pi}M$.
Nevertheless, we proceed with showing that, given any point $m$ of $M$,
there exists an open neighborhood $U_m$ of $m$
and a small open interval $I_m$ of $\RR$ containing $0$
such that, for each $t\in I_m$,
the image of $\sigma(U_m)$ under the flow $\Phi^{\Xi}_t$
is the image of some local section of $L/A\xto{\pi}M$, which we denote $\sigma_t$.
(Note that the domain of definition of the local section $\sigma_t$ varies with $t$.
Indeed, the domain of definition of $\sigma_t$ is $\pi\circ\Phi^{\Xi}_t\circ\sigma(U_m)$.)

Consider the map $\Psi^{\Xi,\sigma}:\RR\times M\to\RR\times M$ defined by
\[ \Psi^{\Xi,\sigma}(t,m)=\big(t,\pi\circ\Phi^{\Xi}_t\circ\sigma(m)\big) ,\]
which fixes all points of the submanifold $\{0\}\times M$.
Actually, unless the geodesic spray $\Xi$ is complete,
$\Psi^{\Xi,\sigma}$ is only defined on a neighborhood of $\{0\}\times M$.
The differential of $\Psi^{\Xi,\sigma}$ is clearly nonsingular at every point of $\{0\}\times M$.
Therefore $\Psi^{\Xi,\sigma}$ is a diffeomorphism from an open neighborhood $V$ of $\{0\}\times M$
in $\RR\times M$ to some other open neighborhood $W$ of $\{0\}\times M$ in $\RR\times M$.
The inverse of $\Psi^{\Xi,\sigma}$ is of the form
\[ (\Psi^{\Xi,\sigma}\big)^{-1}(t,m)=\big(t,(\psi^{\Xi,\sigma}_t\big)^{-1}(m)\big) ,\]
where, for each fixed $t$, the symbol $\psi^{\Xi,\sigma}_t$ denotes the bijective map
\[ \{ m\in M \mathrel{|} (t,m)\in V \}
\xto{\pi\circ\Phi^{\Xi}_t\circ\sigma}
\{ m\in M \mathrel{|} (t,m)\in W \} .\]

We define the one-parameter family of maps $\sigma_t$ as follows:
\begin{equation}\label{Canada} \sigma_t(m)=
\Phi^{\Xi}_t\circ\sigma\circ(\psi^{\Xi,\sigma}_t\big)^{-1}(m),
\quad\forall(t,m)\in W
.\end{equation}
By construction, $\sigma_0=\sigma$ and $\sigma_t$ is
a section of $L/A\xto{\pi}M$ over $\{ m\in M \mathrel{|} (t,m)\in W \}$.

The following lemma will be used later in the proof of Lemma~\ref{Oman}.

\begin{lemma}\label{Denmark}
For every section $\sigma$ of the vector bundle $L/A\xto{\pi}M$, we have
\[ \left.\frac{d\sigma_t}{dt}\right|_0=-\nabla_{j(\sigma)}\sigma .\]
\end{lemma}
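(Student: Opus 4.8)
The plan is to reduce the statement to a fiberwise computation and then apply the chain rule, keeping careful track of the two distinct ways in which $t$ enters the definition~\eqref{Canada} of $\sigma_t$. Fix a point $m\in M$. Since $\sigma_t$ and $\sigma_0=\sigma$ take values in the same fibers of $\pi:L/A\to M$, the $t$-derivative $\frac{d}{dt}\big|_0\sigma_t(m)$ is a vertical tangent vector at $\sigma(m)$, which we identify via $\tau_{\sigma(m)}$ with an element of the fiber $(L/A)_m$; thus $\frac{d\sigma_t}{dt}\big|_0$ is genuinely a section of $L/A$, and it suffices to evaluate it at each $m$. Writing $m_t:=\big(\psi^{\Xi,\sigma}_t\big)^{-1}(m)$, so that $m_0=m$, definition~\eqref{Canada} gives $\sigma_t(m)=\Phi^{\Xi}_t\big(\sigma(m_t)\big)$. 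The idea is to view this as the diagonal restriction of $F(t,s)=\Phi^{\Xi}_t\big(\sigma(m_s)\big)$ and to compute $\frac{d}{dt}\big|_0 F(t,t)=\frac{\partial F}{\partial t}(0,0)+\frac{\partial F}{\partial s}(0,0)$.

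The first contribution is the flow channel: $\frac{\partial F}{\partial t}(0,0)=\frac{d}{dt}\big|_0\Phi^{\Xi}_t\big(\sigma(m)\big)=\Xi_{\sigma(m)}=h\big(j(\sigma(m)),\sigma(m)\big)$, by the very definition of the geodesic vector field $\Xi$. The second contribution is the moving-basepoint channel: $\frac{\partial F}{\partial s}(0,0)=\sigma_*(\dot m_0)$, where $\dot m_0:=\frac{d}{ds}\big|_0 m_s\in (T_M)_m$. To pin down $\dot m_0$ I would differentiate the defining constraint $\pi\big(\Phi^{\Xi}_t(\sigma(m_t))\big)=m$ (which holds because $\psi^{\Xi,\sigma}_t=\pi\circ\Phi^{\Xi}_t\circ\sigma$ and $m_t=(\psi^{\Xi,\sigma}_t)^{-1}(m)$) at $t=0$. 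Applying $\pi_*$ to the diagonal derivative and using $\pi_*\Xi_{\sigma(m)}=\rho\big(j(\sigma(m))\big)$ from~\eqref{Belgium} together with $\pi_*\sigma_*=\id$, one obtains $\rho\big(j(\sigma(m))\big)+\dot m_0=0$, hence $\dot m_0=-\rho\big(j(\sigma(m))\big)$.

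Combining the two channels yields
\[ \frac{d}{dt}\Big|_0\sigma_t(m)=h\big(j(\sigma(m)),\sigma(m)\big)-\sigma_*\big(\rho(j(\sigma(m)))\big) .\]
To finish, I would invoke the horizontal-lift/covariant-derivative identity~\eqref{XiAn} applied to $E=L/A$, $l=j(\sigma)$, $e=\sigma$, which reads $\sigma_*\big(\rho(j(\sigma(m)))\big)-h\big(j(\sigma(m)),\sigma(m)\big)=\tau_{\sigma(m)}\big((\nabla_{j(\sigma)}\sigma)_m\big)$. Substituting this in and cancelling the isomorphism $\tau_{\sigma(m)}$ gives $\frac{d}{dt}\big|_0\sigma_t(m)=-(\nabla_{j(\sigma)}\sigma)_m$, which is the claim. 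The only genuine subtlety — and the step most prone to sign errors — is the bookkeeping of the two $t$-dependencies and, in particular, the minus sign arising from differentiating through the inverse diffeomorphism $(\psi^{\Xi,\sigma}_t)^{-1}$; once the constraint $\pi\circ\Phi^{\Xi}_t\circ\sigma(m_t)=m$ is differentiated correctly, everything reduces to the already-established identities~\eqref{Belgium} and~\eqref{XiAn}.
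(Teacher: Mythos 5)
Your proof is correct and follows essentially the same route as the paper: both arguments differentiate the defining relation~\eqref{Canada} by the chain rule into the same two channels (flow of $\Xi$ and moving basepoint) and conclude via the identities~\eqref{Belgium} and~\eqref{XiAn}. The only cosmetic difference is that the paper differentiates the identity $\Phi^{\Xi}_t\circ\sigma=\sigma_t\circ\psi^{\Xi,\sigma}_t$ and solves for $\left.\tfrac{d\sigma_t}{dt}\right|_0$, whereas you differentiate $\sigma_t=\Phi^{\Xi}_t\circ\sigma\circ\big(\psi^{\Xi,\sigma}_t\big)^{-1}$ directly, recovering the derivative of the inverse from the constraint $\pi\circ\Phi^{\Xi}_t\circ\sigma(m_t)=m$ --- the same computation with the sign bookkeeping placed elsewhere.
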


\begin{proof}
Differentiating $\Phi^{\Xi}_t\circ\sigma(m)=\sigma_t\circ\psi^{\Xi,\sigma}_t(m)$
with respect to the parameter $t$, we obtain
\[ \left.\frac{d}{dt}\Phi^{\Xi}_t\big(\sigma(m)\big)\right|_0
= \left.\frac{d}{dt}\sigma_t(m)\right|_0
+\left.\frac{d}{dt}\sigma\circ\pi\circ\Phi^{\Xi}_t\circ\sigma(m)\right|_0 \]
and hence, making use of Equations~\eqref{Lancaster}, \eqref{York} and~\eqref{XiAn},
\begin{align*} \left.\frac{d}{dt}\sigma_t(m)\right|_0 =&\
\Xi_{\sigma(m)}-\sigma_*\pi_*\Xi_{\sigma(m)} \\
=&\ h\big(j(\sigma(m)),\sigma(m)\big)-\sigma_*\anchor\big(j(\sigma(m))\big) \\
=&\ -\tau_{\sigma(m)}\big((\nabla_{j(\sigma)}\sigma)(m)\big)
.\end{align*}
Therefore, we have established the equality
$\left.\frac{d\sigma_t}{dt}\right|_0=-\nabla_{j(\sigma)}\sigma$
in $\sections{L/A}$.
\end{proof}

\subsection{Exponential map}
\label{Hungary}
We can now outline the definition of the exponential map
$\exp^{\nabla,j}$ from \emph{a neighborhood of the zero section of} $L/A$ to
\emph{a neighborhood of the unit section of} $\groupoid{L}/\groupoid{A}$.
This map depends on the splitting $j$ and the connection $\nabla$ we have chosen;
different choices yielding different exponential maps.
\begin{enumerate}
\item Given an element $x\in L/A$, consider the integral curve $t\mapsto b_x(t)$ in $L/A$
of the geodesic spray $\Xi$ originating from $b_x(0)=x$.
According to Remark~\ref{Yerevan}, provided $x$ lies sufficiently close to the zero section of $L/A$,
this curve is defined up to time $t=1$.
\item The lifted curve $t\mapsto j\big(b_x(t)\big)$ is a $\anchor$-path in the Lie algebroid $L$,
for the `image' of the geodesic equation $b'_x(t) = h\big(j(b_x(t)),b_x(t)\big)$
under the projection $\pi_*:T_{L/A}\to T_M$ is precisely the $\anchor$-path equation
$\pi_* \big((j\circ b_x)'(t)\big) = \anchor\big( j\circ b_x(t) \big)$
as $\pi_*\big(h(l,e)\big)=\anchor(l)$ per Diagram~\eqref{Warsaw}.
\item The $\anchor$-path $t\mapsto j\circ b_x(t)$ determines a unique $\source$-path
$t\mapsto g^x(t)$ in the Lie groupoid $\groupoid{L}$.
\item Composing the path $g^x$ with the canonical projection
$\groupoid{L}\to\groupoid{L}/\groupoid{A}$,
we obtain a path in $\groupoid{L}/\groupoid{A}$ whose value at $t=1$
is taken to be the image of $x$ by the exponential map $\exp^{\nabla,j}$.
\end{enumerate}

Now, we are ready to introduce:

\begin{definition}\label{Italy}
The exponential map \[ \exp^{\nabla,j}: L/A\to\groupoid{L}/\groupoid{A} \]
associated with an $L$-connection $\nabla$ on $L/A$ and a splitting $j:L/A\to L$
is the map which takes a point $x$ of $L/A$
to the projection of $g^x(1)$ in $\groupoid{L}/\groupoid{A}$,
where $t\mapsto g^x(t)$ is the unique path in $\groupoid{L}$ satisfying
\[ \left.\frac{d}{d\tau}\big(g^x(t)\big)^{-1}g^x(\tau)\right|_t = j\big(\Phi^{\Xi}_t(x)\big)
\qquad\text{and}\qquad g^x(0)=1_{\pi(x)} .\]
\end{definition}

The exponential map is well-defined on a neighborhood of the zero section of $L/A$.

\pagebreak[2]
\begin{lemma}\label{Japan}
\strut
\begin{enumerate}
\item For any $x\in L/A$ and any real numbers $r,t$
in a sufficiently small open interval around $0$,
we have $g^{rx}(t)=g^{x}(rt)$.
\item For any $x\in L/A$ and any real numbers $t_1,t_2$
in a sufficiently small open interval around $0$, we have
$g^{x}(t_1+t_2)=g^{x}(t_1)\cdot g^{\Phi^\Xi_{t_1}(x)}(t_2)$.
\end{enumerate}
\end{lemma}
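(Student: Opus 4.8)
The plan is to deduce both multiplicative identities directly from the uniqueness half of Lemma~\ref{Vienna}. By Definition~\ref{Italy}, $g^x$ is \emph{the} unique $\source$-path in $\groupoid{L}$ whose associated $\anchor$-path is $t\mapsto j\bigl(\Phi^{\Xi}_t(x)\bigr)$ and whose origin is $1_{\pi(x)}$. Hence, for each identity, it suffices to exhibit the right-hand side as a curve that is an $\source$-path with the correct origin and whose associated $\anchor$-path agrees with the one prescribed by Definition~\ref{Italy}; the flow identities of Lemma~\ref{Yerevan} are exactly what will make the $\anchor$-paths match. Throughout I would use the groupoid convention forced by the formula in Lemma~\ref{Vienna}, namely that $g\cdot h$ is defined when $\target(g)=\source(h)$, so that $\source\bigl(g^x(t)\bigr)=\pi(x)$ and $\target\bigl(g^x(t)\bigr)=\pi\bigl(\Phi^{\Xi}_t(x)\bigr)$ for every $\source$-path $g^x$.

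For part (1), I would set $h(t)=g^x(rt)$. Then $\source\bigl(h(t)\bigr)=\pi(x)$ is constant and $h(0)=g^x(0)=1_{\pi(x)}=1_{\pi(rx)}$, so $h$ is an $\source$-path with the origin required of $g^{rx}$. Its associated $\anchor$-path is obtained by the chain rule: substituting $u=r\tau$ in
\[ \left.\frac{d}{d\tau}\bigl(h(t)\bigr)^{-1}h(\tau)\right|_{t}=\left.\frac{d}{d\tau}\bigl(g^x(rt)\bigr)^{-1}g^x(r\tau)\right|_{t} \]
yields $r\cdot j\bigl(\Phi^{\Xi}_{rt}(x)\bigr)$. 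By Lemma~\ref{Yerevan}(1) one has $\Phi^{\Xi}_t(rx)=r\,\Phi^{\Xi}_{rt}(x)$, and since $j$ is fibrewise linear this equals $j\bigl(\Phi^{\Xi}_t(rx)\bigr)$, which is precisely the $\anchor$-path defining $g^{rx}$. Uniqueness then forces $g^{rx}(t)=g^x(rt)$.

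For part (2), I would fix $t_1$, write $y=\Phi^{\Xi}_{t_1}(x)$, and set $c(t_2)=\bigl(g^x(t_1)\bigr)^{-1}g^x(t_1+t_2)$. First one checks composability: $\target\bigl(g^x(t_1)\bigr)=\pi(y)=\source\bigl(g^y(t_2)\bigr)$, so $g^x(t_1)\cdot g^y(t_2)$ is defined and the claim is equivalent to $c(t_2)=g^y(t_2)$. The curve $c$ is an $\source$-path, since $\source\bigl(c(t_2)\bigr)=\target\bigl(g^x(t_1)\bigr)=\pi(y)$ is constant and $c(0)=1_{\pi(y)}$, matching the origin of $g^y$. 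To compute its $\anchor$-path I would expand
\[ \bigl(c(t_2)\bigr)^{-1}c(\tau)=\bigl(g^x(t_1+t_2)\bigr)^{-1}g^x(t_1)\bigl(g^x(t_1)\bigr)^{-1}g^x(t_1+\tau)=\bigl(g^x(t_1+t_2)\bigr)^{-1}g^x(t_1+\tau), \]
using the cancellation $g^x(t_1)\bigl(g^x(t_1)\bigr)^{-1}=1_{\pi(x)}$. Differentiating at $\tau=t_2$ and shifting the variable by $t_1$ produces $j\bigl(\Phi^{\Xi}_{t_1+t_2}(x)\bigr)$, which by Lemma~\ref{Yerevan}(2) equals $j\bigl(\Phi^{\Xi}_{t_2}(y)\bigr)$, the $\anchor$-path defining $g^y$. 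Uniqueness yields $c=g^y$, hence $g^x(t_1+t_2)=g^x(t_1)\cdot g^{\Phi^{\Xi}_{t_1}(x)}(t_2)$.

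The only genuinely delicate point I expect is the bookkeeping with the groupoid structure: one must track source and target along every path to guarantee that all the products above are composable, and the cancellations such as $g^x(t_1)\bigl(g^x(t_1)\bigr)^{-1}=1_{\pi(x)}$ depend on this. Once composability is secured, both statements reduce to the two flow identities of Lemma~\ref{Yerevan} through the uniqueness in Lemma~\ref{Vienna}.
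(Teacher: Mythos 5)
Your proof is correct and follows exactly the route the paper intends: the paper's entire justification for this lemma is the sentence ``We reformulate Lemma~\ref{Yerevan},'' i.e.\ the identities for $g^x$ are the flow identities of Lemma~\ref{Yerevan} transported through the correspondence between $\source$-paths and $\anchor$-paths of Lemma~\ref{Vienna}. You have simply made that translation explicit, with the correct groupoid composition convention, the composability checks, and the uniqueness argument spelled out.
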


\begin{proof}
The first assertion is a straightforward consequence of Remark~\ref{Yerevan}
while the second one follows immediately from the flow property
$\Phi^\Xi_{t_1+t_2}=\Phi^\Xi_{t_2}\circ\Phi^\Xi_{t_1}$.
\end{proof}

\begin{proposition}\label{Koweit}
The exponential map $\exp^{\nabla,j}:L/A\to\groupoid{L}/\groupoid{A}$
is a fiber bundle map over $\id:M\to M$,
which maps the zero section of $L/A$ to the unit section of $\groupoid{L}/\groupoid{A}$,
and identifies diffeomorphically a neighborhood of the zero section of $L/A$
to a neighborhood of the unit section of $\groupoid{L}/\groupoid{A}$.
\end{proposition}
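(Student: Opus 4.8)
The plan is to prove the three assertions—fiber-bundle map over $\id_M$, mapping zero section to identity section, and local diffeomorphism around these sections—by unwinding Definition~\ref{Italy} and exploiting the structural properties already packaged in Lemmas~\ref{Vienna} and~\ref{Japan}. First I would verify that $\exp^{\nabla,j}$ is well defined and fiber-preserving. The key observation is that the $\source$-path $t\mapsto g^x(t)$ associated to $x\in (L/A)_m$ originates at $g^x(0)=1_m$, hence lies entirely in the $\source$-fiber over $m$. Since the projection $\groupoid{L}\onto\groupoid{L}/\groupoid{A}$ intertwines the source map $\source:\groupoid{L}\to M$ with the induced bundle projection $\source:\groupoid{L}/\groupoid{A}\to M$, the image $\exp^{\nabla,j}(x)$ sits in the fiber over $m$. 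This shows $\source\circ\exp^{\nabla,j}=\pi$, i.e.\ the map covers $\id_M$.

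Next I would check the behavior on the zero section. For $x=0_m$ the geodesic flow is stationary, $\Phi^{\Xi}_t(0_m)=0_m$ by part~(1) of Lemma~\ref{Japan} (take $r=0$, giving $g^{0_m}(t)=g^{0_m}(0)=1_m$), so the lifted $\rho$-path $t\mapsto j\big(\Phi^\Xi_t(0_m)\big)$ is identically $j(0_m)=0$, and the defining ODE in Definition~\ref{Italy} forces $g^{0_m}(t)\equiv 1_m$. Projecting, $\exp^{\nabla,j}(0_m)$ is the class of $1_m$, which is precisely the identity section of $\groupoid{L}/\groupoid{A}$. Thus the zero section maps to the identity section.

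The heart of the argument, and the step I expect to be the main obstacle, is the local-diffeomorphism claim. I would compute the differential of $\exp^{\nabla,j}$ at a point $0_m$ of the zero section and show it is invertible; by the inverse function theorem this gives a local diffeomorphism near each such point, and a standard fiberwise/tubular-neighborhood argument then upgrades this to a diffeomorphism between neighborhoods of the two sections. Since both source and target are fiber bundles over $M$ and the map covers $\id_M$, it suffices to analyze the differential in two complementary directions at $0_m$: along the fiber $(L/A)_m$ and transverse to it (along $M$). In the transverse direction the map restricts to $\id_M$ on the zero/identity sections, so that block of the differential is the identity. In the fiber direction I would use the homogeneity relation $g^{rx}(t)=g^{x}(rt)$ from part~(1) of Lemma~\ref{Japan}, which yields $\exp^{\nabla,j}(rx)=$ the projection of $g^x(r)$; differentiating in $r$ at $r=0$ recovers the initial velocity of the $\source$-path, namely $j(x)$ pushed into $T_{\groupoid{L}/\groupoid{A}}$ along the $\source$-fiber. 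Under the canonical identification of the $\source$-fiber tangent space at the identity with $L$ and the subsequent quotient by $A$, this fiber-direction differential becomes $q\circ j=\id_{L/A}$, where $q:L\onto L/A$ is the projection and $j$ its chosen splitting. Hence the full differential at $0_m$ is block-triangular with identity diagonal blocks, so it is invertible; the inverse function theorem and homogeneity then deliver the local diffeomorphism uniformly along the entire zero section.

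The technical subtlety I would handle with care is the precise identification of the relevant tangent spaces—namely that the differential of the $\source$-fiber projection $\groupoid{L}\to\groupoid{L}/\groupoid{A}$ at $1_m$ identifies $L_m$ with the tangent to the $\source$-fiber of $\groupoid{L}/\groupoid{A}$ modulo $A_m$, so that the velocity $j(x)\in L$ descends exactly to $x\in L/A$. Getting the bookkeeping right in the commuting square relating the $\source$-fibers of $\groupoid{L}$ and $\groupoid{L}/\groupoid{A}$, together with Lemma~\ref{Vienna}'s correspondence between $\source$-paths and $\rho$-paths, is where the real work lies; the homogeneity and groupoid-multiplicativity properties of Lemma~\ref{Japan} are exactly what make the initial-velocity computation clean.
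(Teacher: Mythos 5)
Your proposal is correct and follows essentially the same route as the paper's proof: the zero section goes to the identity section because the geodesic data is stationary there, and the fiberwise differential along the zero section is computed to be the identity via the homogeneity relation $g^{rx}(t)=g^x(rt)$ of Lemma~\ref{Japan}, the defining ODE of $g^x$, and the identification $q\circ j=\id_{L/A}$. The paper merely states these two observations and leaves the block-triangularity/inverse-function-theorem bookkeeping implicit, which you spell out; this is elaboration, not a different argument.
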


\begin{proof}The result follows from the following two observations:
\textbf{(1)} $\exp^{\nabla,j}(0_m)=1_m$ for every $m\in M$ since the geodesic spray $\Xi$
vanishes along the zero section of $L/A$;
\textbf{(2)} along the zero section, the differential of $\exp^{\nabla,j}$
in the direction of the fibers is the identity map since, by Lemma~\ref{Japan},
\begin{multline*}
\left.\frac{d}{dt}\exp^{\nabla,j}(tx)\right|_0
=\left.\frac{d}{dt}\bm{q}\big(g^{tx}(1)\big)\right|_0
=\bm{q}_*\left(\left.\frac{d}{dt} g^x(t)\right|_0\right) \\
=\bm{q}_*\left(\left.\frac{d}{dt} \big(g^x(0)\big)^{-1}\cdot g^x(t)\right|_0\right)
=q\big(j\circ \Phi^\Xi_{0}(x)\big)=\Phi^\Xi_{0}(x)=x,
\end{multline*}
where $\bm{q}:\groupoid{L}\to\groupoid{L}/\groupoid{A}$
and $q:L\to L/A$ are the canonical projections.
\end{proof}

\begin{example}\label{ex:matched}
When the subbundle $B=j(L/A)$ of $L$ to which $L/A$ is identified by the
splitting \eqref{Murmansk} happens to be a Lie subalgebroid of $L$,
we say that the complementary Lie subalgebroids $A$ and $B$ of $L$
form a \emph{matched pair} \cite{MR1460632}.
The vector bundle $L$ being the Whitney sum $A\oplus B$,
we use the notation $L=A\bowtie B$ to denote that $A$ and $B$ form a matched pair of Lie algebroids.
In this situation, a choice of $B$-connection $\ddot{\nabla}$ on $B$
determines an $L$-connection $\nabla$ on $L/A\cong B$
through the relation
\begin{equation}
\label{eq:matched}
\nabla_{a+b}\; c = \nabla^{\Bott}_a c+\ddot{\nabla}_{b}\; c
,\quad\forall a\in\sections{A},\forall b,c\in\sections{B} .
\end{equation}
The resulting map $\exp^{\nabla,j}$ is the Landsman exponential map
\cite{MR1722129,MR1687747}
corresponding to the $B$-connection $\ddot{\nabla}$ on $B$.
A particular case of this situation arises when $A$ is the trivial subbundle of rank $0$
of the Lie algebroid $L$ and the splitting $j$ is (necessarily) the identity map on $L$.
\end{example}

\begin{remark}
The reader might wonder whether there exists an $L$-connection $\widetilde{\nabla}$ on $L$
whose associated Landsman exponential map $\exp^{\widetilde{\nabla}}$ makes the diagram
\begin{equation}\label{eq:Landsmanexp}
\begin{tikzcd}
L \arrow[r, "\exp^{\widetilde{\nabla}}"] & \mathscr{L} \arrow[d, two heads, "\bm{q}"] \\
L/A \arrow[u, hook, "j"] \arrow[r, "\exp^{\nabla,j}"'] & \mathscr{L}/\mathscr{A}
\end{tikzcd}
\end{equation}
commute.
One can prove that, if an $L$-connection $\widetilde{\nabla}$ on $L$ satisfies
\begin{equation}\label{La_Chaux-de-Fonds}
\widetilde{\nabla}_{l}\big(j(b)\big)=j\big(\nabla_{l}b\big)
,\quad\forall l\in\sections{L},\forall b\in\sections{L/A}
,\end{equation}
then the associated Landsman exponential map $\exp^{\widetilde{\nabla}}$
does indeed make the diagram \eqref{eq:Landsmanexp} commute.
Moreover, given a splitting $j$ and an $L$-connection $\nabla$ on $L/A$,
one can always construct an $L$-connection $\widetilde{\nabla}$ on $L$ satisfying
\eqref{La_Chaux-de-Fonds}.
Therefore, we could also have defined our exponential map
$\exp^{\nabla,j}:L/A\to\mathscr{L}/\mathscr{A}$
as the composition $\bm{q}\circ\exp^{\widetilde{\nabla}}\circ j$.
However, working with $\widetilde{\nabla}$ rather than $\nabla$ introduces
extraneous information in the construction:
knowledge of $\widetilde{\nabla}_l i(a)$ for $a\in\sections{A}$ is superfluous.
Finally, we would like to stress the following important point:
it is crucial that, for all $l\in\sections{L}$,
the subspace $j\big(\sections{L/A}\big)$ of $\sections{L}$ be stable
under the operation $x\mapsto\widetilde{\nabla}_l x$,
which is guaranteed by condition \eqref{La_Chaux-de-Fonds}.
Otherwise, the composition $\bm{q}\circ\exp^{\widetilde{\nabla}}\circ j$
might not be a local diffeomorphism near the zero section of $L/A$.
\end{remark}

\subsection{From the exponential map to the PBW isomorphism}

Given a fiber bundle $\pi:P\to M$ and a section $\epsilon:M\to P$,
consider the space $\DO(P,M)$ of all maps $C^\infty(P)\to C^\infty(M)$
obtained by composition of a $\pi$-fiberwise differential operator on $P$
(seen as an endomorphism of $C^\infty(P)$)
with the restriction $\epsilon^* : C^\infty(P) \to C^\infty(M)$.
In other words, $\DO(P,M)$ is the space of $\pi$-fiberwise distributions on $P$
supported on $\epsilon(M)$.
The space $\DO(P,M)$ is an $R$-coalgebra filtered by the order of the differential operators.
Here $R = C^\infty(M)$.

Given an isomorphism of fiber bundles
\[ \begin{tikzcd} P_1 \arrow[r, "\Psi"] \arrow[d, "\pi_1"] & P_2 \arrow[d, "\pi_2"'] \\
M \arrow[u, dashed, bend left, "\epsilon_1"] \arrow[r, "\id"'] & M
\arrow[u, dashed, bend right, "\epsilon_2"']\end{tikzcd} \]
and a pair of sections $\epsilon_1:M\to P_1$ and $\epsilon_2:M\to P_2$
such that $\Psi\circ\epsilon_1=\epsilon_2$,
the algebra isomorphism $\Psi^*:C^\infty(P_2)\to C^\infty(P_1)$
induces an isomorphism of filtered $R$-coalgebras
\[ \Psi_*:\DO(P_1,M)\to\DO(P_2,M) .\]
In other words, $\Psi_*$ is the push-forward of fiberwise distributions through $\Psi$.

For instance, given an inclusion of Lie groupoids $\groupoid{A}\into\groupoid{L}$
and the corresponding inclusion of Lie algebroids $A\into L$, the exponential map
$\exp^{\nabla,j}:L/A\to\groupoid{L}/\groupoid{A}$ of Definition~\ref{Italy}
induces an isomorphism of $R$-coalgebras
\begin{equation}\label{Laos} \exp^{\nabla,j}_*:\DO(L/A,M)\to\DO(\groupoid{L}/\groupoid{A},M)
.\end{equation}
Indeed, in view of Proposition~\ref{Koweit}, it suffices to take
\begin{align*}
\pi_1&=(L/A\to M) & \pi_2&=(\groupoid{L}/\groupoid{A}\xto{s}M) \\
\epsilon_1&=(M\xto{0}L/A) & \epsilon_2&=(M\xto{1}\groupoid{L}/\groupoid{A}) \\
\Psi&=(L/A\xto{\exp^{\nabla,j}}\groupoid{L}/\groupoid{A}) & &
\end{align*}
in the construction above.

\begin{lemma}\label{Krakow}
Let $\groupoid{A}\hookrightarrow\groupoid{L}$ be an inclusion of Lie groupoids
having the same space of units and let $A\hookrightarrow L$ denote
the corresponding inclusion of Lie algebroids.
\begin{enumerate}
\item The filtered $R$-coalgebras $\DO(L/A,M)$ and $\sections{S(L/A)}$ are canonically isomorphic.
\item The filtered $R$-coalgebras $\DO(\groupoid{L}/\groupoid{A},M)$ and
$\frac{\enveloping{L}}{\enveloping{L}\sections{A}}$ are canonically isomorphic.
\end{enumerate}
\end{lemma}

\begin{proof}[Sketch of proof]
\textbf{(1)} To the symmetric product $x_1\odot x_2\odot\cdots\odot x_n$ in $\sections{S(L/A)}$ of
sections $x_1,x_2,\dots,x_n$ of $L/A\to M$, we associate the fiberwise differential operator
in $\DO(L/A,M)$ which takes a function $f\in C^\infty(L/A)$ to the function
\[ 0^*\big(\underline{x_1}\circ\underline{x_2}\circ\dots\circ\underline{x_n}(f)\big)
\in C^\infty(M) .\]
Here $0$ denotes the zero section of the vector bundle $L/A\to M$
while $\underline{x}$ stands for the vertical and fiberwise constant vector field
on $L/A$ with flow $\Phi^{\underline{x}}_t(b)=b+tx_{\pi(b)}$ for all $b\in L/A$.
\textbf{(2)} To the image in $\frac{\enveloping{L}}{\enveloping{L}\sections{A}}$ of the product
$x_1\cdot x_2\cdot \cdots \cdot x_n\in\enveloping{L}$
of sections $x_1,x_2,\dots,x_n$ of $L$, we associate the fiberwise differential operator
in $\DO(\groupoid{L}/\groupoid{A},M)$ which takes a function
$f\in C^\infty(\groupoid{L}/\groupoid{A})$ to the function
\[ 1^*\big(\overrightarrow{x_1}\circ\overrightarrow{x_2}
\circ\dots\circ\overrightarrow{x_n}(\bm{q}^*f)\big)\in C^\infty(M) .\]
Here $\overrightarrow{x}$ stands for the left-invariant vector field on $\groupoid{L}$
corresponding to the section $x$ of $L$.
As previously, $\bm{q}$ denotes the canonical projection
$\bm{q}:\groupoid{L}\to\groupoid{L}/\groupoid{A}$
and $1$ denotes the unit map $1:M\to\groupoid{L}$.
\end{proof}

It turns out that the isomorphism~\eqref{Laos} coincides with the PBW isomorphism
defined in Section~\ref{Madrid}. This is the result that was announced as Theorem~\ref{Copenhagen}
in Section~\ref{Biloxi}.

\begin{theorem}\label{Madagascar}
Let $\groupoid{A}\hookrightarrow\groupoid{L}$ be an inclusion of Lie groupoids
having the same space of units
and let $A\hookrightarrow L$ denote the corresponding inclusion of Lie algebroids.
Given a splitting $j:L/A \to L$ of the short exact sequence of vector bundles
\eqref{Sydney} and an $L$-connection $\nabla$ on $L/A$,
the filtered $R$-coalgebra isomorphism
\[ \exp^{\nabla,j}_*:\DO(L/A,M)\to\DO(\groupoid{L}/\groupoid{A},M) \]
induced by the exponential map coincides with the Poincaré--Birkhoff--Witt isomorphism
\[ \pbw^{\nabla,j}:\sections{S(L/A)}\to\frac{\enveloping{L}}{\enveloping{L}\sections{A}} \]
in Theorem~\ref{Nairobi}.
\end{theorem}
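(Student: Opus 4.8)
The plan is to exploit the uniqueness already contained in Theorem~\ref{Nairobi}. Reading $\exp^{\nabla,j}_*$ through the two canonical isomorphisms of filtered coalgebras $\DO(L/A,M)\cong\sections{S(L/A)}$ and $\DO(\groupoid{L}/\groupoid{A},M)\cong\frac{\enveloping{L}}{\enveloping{L}\sections{A}}$ supplied by the two preceding lemmas, I obtain an $R$-linear map $\sections{S(L/A)}\to\frac{\enveloping{L}}{\enveloping{L}\sections{A}}$. By the remark following Theorem~\ref{Nairobi}, Equations~\eqref{Ottawa}, \eqref{Pittsburgh} and~\eqref{Rome} (equivalently~\eqref{Quito}) determine $\pbw^{\nabla,j}$ as the \emph{unique} $R$-linear map enjoying those properties, so it suffices to verify that $\exp^{\nabla,j}_*$ satisfies them. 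Unwinding the definitions and using the scaling property of Lemma~\ref{Japan}(1) (so that $\exp^{\nabla,j}(s\,b(m))=\boldsymbol{q}(g^{b(m)}(s))$) yields the workable closed form
\[ \exp^{\nabla,j}_*(b^n)(f)(m)=\left.\tfrac{d^n}{ds^n}\right|_0 f\big(\exp^{\nabla,j}(s\,b(m))\big)=\left.\tfrac{d^n}{ds^n}\right|_0 (\boldsymbol{q}^*f)\big(g^{b(m)}(s)\big). \]
Since in characteristic zero every symmetric product is a linear combination of powers $b^n$ by polarization, and both maps are $R$-linear, it is in fact enough to treat the elements $b^n$.

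The two low-order identities are immediate. For~\eqref{Ottawa}, Proposition~\ref{Koweit} asserts that $\exp^{\nabla,j}$ carries the zero section to the identity section, so the order-zero operator attached to $f\in R$ is sent to itself. For~\eqref{Pittsburgh}, a single differentiation together with $\left.\tfrac{d}{ds}\right|_0 g^{b(m)}(s)=j(b(m))$ (the defining property of $g^{b(m)}$, since $\Phi^\Xi_0=\id$) gives $\exp^{\nabla,j}_*(b)(f)(m)=(\boldsymbol{q}^*f)_*\big(j(b(m))\big)=\big(\overrightarrow{j(b)}\,\boldsymbol{q}^*f\big)(1_m)$, which is precisely the operator representing $j(b)\in\frac{\enveloping{L}}{\enveloping{L}\sections{A}}$.

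The heart of the matter is the recursion~\eqref{Quito}. Here I would differentiate the curve $s\mapsto g^{b(m)}(s)$ one further time by means of the group law of Lemma~\ref{Japan}(2), $g^{b(m)}(s+h)=g^{b(m)}(h)\cdot g^{\Phi^\Xi_h(b(m))}(s)$. Peeling off the leading factor $g^{b(m)}(h)$ produces a left translation by the infinitesimal generator $j(b)$, accounting for the term $j(b)\cdot\exp^{\nabla,j}_*(b^n)$. The remaining, genuinely delicate, contribution arises because the initial velocity $\Phi^\Xi_h(b(m))$ of the trailing geodesic is not parallel: its first variation is governed by the geodesic vector field $\Xi^{\nabla,j}$, whose vertical component is $-\nabla_{j(b)}b$ by the horizontal-lift identity~\eqref{XiAn}, equivalently by Lemma~\ref{Denmark}, and this is what should reproduce the correction $-\exp^{\nabla,j}_*\big(\nabla_{j(b)}(b^n)\big)$.

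The main obstacle is the isolation of \emph{exactly} these two terms, since the two variations are coupled through the moving base point $m_s=\pi\big(\Phi^\Xi_s(b(m))\big)$: geodesics issued from neighbouring fibers cannot be compared naively, and it is the connection $\nabla$ that supplies the comparison, collapsing the first-order discrepancy to the single covariant term rather than an uncontrolled tower of corrections. I expect the cleanest way to tame this is an induction on the filtration degree. Because $\exp^{\nabla,j}_*$ is a morphism of coalgebras, once it agrees with $\pbw^{\nabla,j}$ in all degrees $\leqslant n$ the reduced coproducts of $\exp^{\nabla,j}_*(b^{n+1})$ and $\pbw^{\nabla,j}(b^{n+1})$ coincide, whence their difference is primitive, hence of degree $\leqslant 1$; the counit kills its degree-zero part, so only a single degree-one (primitive) coefficient remains to be matched. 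This reduces~\eqref{Quito} to a purely first-order computation, controlled by Lemma~\ref{Denmark}, and carrying out that computation --- tracking how $\nabla$ relates geodesics based at infinitesimally separated points --- is the technical core on which the theorem rests.
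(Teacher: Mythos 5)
Your overall architecture is the same as the paper's: transport $\exp^{\nabla,j}_*$ through the two identification lemmas, invoke the uniqueness part of Theorem~\ref{Nairobi}, check \eqref{Ottawa} and \eqref{Pittsburgh}, and reduce to powers $b^n$ by polarization. Those steps are correct. The genuine gap is that the recursion~\eqref{Quito} --- which you yourself call the heart of the matter --- is never actually established. You reduce it, via the observation that the difference $d=\exp^{\nabla,j}_*(b^{n+1})-\pbw^{\nabla,j}(b^{n+1})$ is primitive, to ``a purely first-order computation,'' and then stop, declaring that computation to be the technical core. But that computation \emph{is} the theorem: nothing in your text shows that the coupled variation of base point and initial velocity produces exactly the term $-\exp^{\nabla,j}_*\big(\nabla_{j(b)}(b^n)\big)$ and no residual degree-one correction. (Two secondary points: the claim ``primitive, hence of degree $\leqslant 1$'' in $\frac{\enveloping{L}}{\enveloping{L}\sections{A}}$ itself needs an argument, e.g.\ via $\Gr\big(\tfrac{\enveloping{L}}{\enveloping{L}\sections{A}}\big)\cong\sections{S(L/A)}$; and your reduction does not remove the real difficulty, since even writing $j(b)\cdot\exp^{\nabla,j}_*(b^n)$ as an operator requires extending the fiberwise operator $\exp^{\nabla,j}_*(b^n)$, a priori defined only along the unit section, to a left-invariant operator on $\groupoid{L}$ --- and that extension is precisely where the moving-base-point coupling must be confronted.)

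The paper resolves both issues at once by working with sections instead of single vectors: it introduces $\mathscr{E}:\sections{L/A}\to\sections{\groupoid{L}}$, $\mathscr{E}(x)=g^x(1)$, and the map $\mathcal{E}:\sections{S(L/A)}\to\enveloping{L}$ sending $x^n$ to the \emph{left-invariant} fiberwise operator $f\mapsto\Big(l\mapsto\left.\tfrac{d^n}{dt^n}f\big(l\cdot\mathscr{E}(tx)\big)\right|_0\Big)$. The group law of Lemma~\ref{Japan}(2), rewritten at the level of sections as $\mathscr{E}\big((t_0+t)x\big)=\mathscr{E}(t_0x)\cdot\mathscr{E}(t\,x_{t_0})$ with $x_{t_0}=\Phi^{\Xi}_{t_0}\circ x\circ\big(\psi^{\Xi,x}_{t_0}\big)^{-1}$ as in Equation~\eqref{Canada} (this is Lemma~\ref{Niger}), turns the extra differentiation into an honest two-variable chain rule: one term is the left-invariant derivative $\overrightarrow{j(x)}\circ\mathcal{E}[x^n]$, the other is $\mathcal{E}\Big[\left.\tfrac{d}{dt_0}(x_{t_0})^n\right|_0\Big]$, which Lemma~\ref{Denmark} identifies with $-\mathcal{E}\big[\nabla_{j(x)}(x^n)\big]$; this is Lemma~\ref{Oman}, and projecting along $\enveloping{L}\to\frac{\enveloping{L}}{\enveloping{L}\sections{A}}$ gives exactly \eqref{Quito}. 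The ``uncontrolled tower of corrections'' you fear never arises, because the reparametrized family of sections $x_{t_0}$ absorbs the motion of the base point. To complete your proposal you would need either to carry out this section-level computation or to finish your first-order argument; as written, it stops exactly where the proof has to start.
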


The remainder of this section is devoted to the proof of Theorem~\ref{Madagascar}.

We will make use of the bundle map
\[ \begin{tikzcd} L/A \arrow[r, "\bm{E}"] \arrow[d, "\pi"'] & \groupoid{L} \arrow[d, "s"] \\
M \arrow[r, "\id"'] & M \end{tikzcd} \]
defined by $\bm{E}(x)=g^x(1)$ for all $x\in L/A$,
and the associated map
$\mathscr{E}:\sections{L/A}\to\sections{\groupoid{L}}$.
We remind the reader that the exponential map $\exp^{\nabla,j}$
is simply the composition of $\bm{E}:L/A\to\groupoid{L}$
with the canonical projection $\bm{q}:\groupoid{L}\to\groupoid{L}/\groupoid{A}$.

Given a point $l\in\groupoid{L}$ and a section 
$y\in\sections{\groupoid{L}}$, 
we write $l\cdot y$ to denote the product $l\cdot y_{t(l)}\in\groupoid{L}$ 
of $l$ and the value of the section $y$ at $t(l)$.

\pagebreak[2]
\begin{lemma}\label{Niger}
\strut
\begin{enumerate}
\item For any $l\in\groupoid{L}$ and $x\in\sections{L/A}$,
we have \[ \left. \frac{d}{dt} l\cdot\mathscr{E}(t x) \right|_0=\overrightarrow{j(x)}|_l ,\]
where $\overrightarrow{j(x)}$ denotes the left-invariant vector field
on the Lie groupoid $\groupoid{L}$
(tangent to the s-fibers) associated with the section $j(x)$ of the Lie algebroid $L$.
\item For any $x\in\sections{L/A}$ and any real numbers $t_1,t_2$
in a sufficiently small open interval around $0$,
we have \[ \mathscr{E}\big((t_1+t_2)x\big)=\mathscr{E}(t_1x)\cdot\mathscr{E}\big(t_2 x_{t_1}\big) ,\]
where the r.h.s.\ is a product of $s$-sections of $\groupoid{L}$ and
\[ x_{t_1}=\Phi^{\Xi}_{t_1}\circ x\circ (\psi^{\Xi,x}_{t_1})^{-1} \] as in Equation~\eqref{Canada}.
\end{enumerate}
\end{lemma}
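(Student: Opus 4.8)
The plan is to reduce both assertions to pointwise statements about the $s$-paths $t\mapsto g^{x(m)}(t)$ and then feed in the flow identities of Lemma~\ref{Japan}. The basic dictionary is that, for $x\in\sections{L/A}$ and a scalar $\tau$ near $0$, the rescaling property of Lemma~\ref{Japan} yields
\[ \mathscr{E}(\tau x)(m)=\boldsymbol{E}\big(\tau\,x(m)\big)=g^{\tau x(m)}(1)=g^{x(m)}(\tau) ,\]
so $\mathscr{E}(\tau x)$ is the $s$-section $m\mapsto g^{x(m)}(\tau)$, and in particular $\mathscr{E}(0)(m)=g^{x(m)}(0)=1_m$ is the identity section. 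This last fact is what makes the various groupoid products in the statement composable for parameters close to $0$, and it is the only place where I need to know where these products are anchored.

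For the first assertion I would differentiate the defining equation of Definition~\ref{Italy} at $t=0$. Since $g^{x(m)}(0)=1_m$ and multiplication by a unit acts trivially on its $s$-fibre, that equation specializes to
\[ \left.\frac{d}{d\tau}g^{x(m)}(\tau)\right|_0=j\big(\Phi^{\Xi^{\nabla,j}}_0(x(m))\big)=j(x)_m ,\]
a vector tangent to the $s$-fibre at $1_m$, i.e.\ the element $j(x)_m\in L_m$. Thus $\tau\mapsto\mathscr{E}(\tau x)$ is, fibrewise, a curve through the identity section with velocity $j(x)$. Left-translating this curve by $l$ produces a curve through $l$; by the very definition of the left-invariant vector field attached to $j(x)$, its velocity at $\tau=0$ is $\overrightarrow{j(x)}\big|_l$, which is the claimed identity.

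For the second assertion I would again pass to base points and apply the semigroup identity of Lemma~\ref{Japan}. Evaluating the left-hand side at $m$,
\[ \mathscr{E}\big((t_1+t_2)x\big)(m)=g^{x(m)}(t_1+t_2)=g^{x(m)}(t_1)\cdot g^{\Phi^{\Xi}_{t_1}(x(m))}(t_2) ,\]
whose first factor is exactly $\mathscr{E}(t_1x)(m)$. For the second factor I would use the definition~\eqref{Canada} of $x_{t_1}=\Phi^{\Xi}_{t_1}\circ x\circ(\psi^{\Xi,x}_{t_1})^{-1}$, which gives $x_{t_1}\big(\psi^{\Xi,x}_{t_1}(m)\big)=\Phi^{\Xi}_{t_1}(x(m))$ and hence
\[ \mathscr{E}(t_2x_{t_1})\big(\psi^{\Xi,x}_{t_1}(m)\big)=g^{\Phi^{\Xi}_{t_1}(x(m))}(t_2) .\]
Since the reparametrization satisfies $\psi^{\Xi,x}_{t_1}(m)=\pi\big(\Phi^{\Xi}_{t_1}(x(m))\big)=t\big(\mathscr{E}(t_1x)(m)\big)$, evaluating the second section at $\psi^{\Xi,x}_{t_1}(m)$ is precisely the rule for multiplying the $s$-sections $\mathscr{E}(t_1x)$ and $\mathscr{E}(t_2x_{t_1})$; combining the two displays gives $\mathscr{E}\big((t_1+t_2)x\big)=\mathscr{E}(t_1x)\cdot\mathscr{E}(t_2x_{t_1})$.

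I expect the genuine difficulty to lie entirely in the base-point and composability bookkeeping: one must verify that $\psi^{\Xi,x}_{t_1}$ is indeed the reparametrization implicit in the $s$-section product and that the targets and sources match (using Lemma~\ref{Vienna} to identify $t\big(\mathscr{E}(t_1x)(m)\big)$ with $\pi\big(\Phi^{\Xi}_{t_1}(x(m))\big)$) so that every groupoid multiplication above is legitimate for small parameters. The local diffeomorphism property of $\psi^{\Xi,\sigma}$ established before Equation~\eqref{Canada}, together with the two identities of Lemma~\ref{Japan}, provides exactly the control needed; once the conventions are pinned down, the remaining steps are routine differentiations and identifications.
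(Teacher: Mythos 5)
Your proposal is correct and takes essentially the same route as the paper's own proof: both parts are reduced, pointwise over $M$, to the two identities of Lemma~\ref{Japan} --- part (1) by combining the rescaling identity $g^{\tau x}(1)=g^{x}(\tau)$ with differentiation of the defining equation of $g^x$ and left translation by $l$, and part (2) by rewriting Lemma~\ref{Japan}(2) as $\boldsymbol{E}\big((t_1+t_2)x\big)=\boldsymbol{E}(t_1x)\cdot\boldsymbol{E}\big(t_2\Phi^{\Xi}_{t_1}(x)\big)$ and passing to sections. The paper's proof is simply a terser version of yours, leaving implicit the source/target and reparametrization bookkeeping (via $\psi^{\Xi,x}_{t_1}$) that you spell out.
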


\begin{proof}
Let $f$ denote an arbitrary smooth function on $\groupoid{L}$.
The first assertion follows from
\[ \left.\frac{d}{dt} f\big(l\cdot\mathscr{E}(t x)\big)\right|_0
=\left.\frac{d}{dt}f\big(l\cdot g^{t x}(1)\big)\right|_0
=\left.\frac{d}{dt}f\big(l\cdot g^{x}(t)\big)\right|_0
=\overrightarrow{j(x)}|_l (f) ,\]
in which we have made use of the first part of Lemma~\ref{Japan}.
The second part of Lemma~\ref{Japan} can be rewritten as
\[ \bm{E}\big((t_1+t_2)x\big)=
\bm{E}(t_1 x)\cdot\bm{E}\big(t_2\Phi^{\Xi}_{t_1}(x)\big)
,\quad\forall x\in L/A.\]
The second assertion follows immediately since $\mathscr{E}$ is the map
on sections associated with the bundle map $\bm{E}$.
\end{proof}

Recall that the universal enveloping algebra $\enveloping{L}$ of the Lie algebroid $L$
can be regarded as the associative algebra of $s$-fiberwise differential operators
on $\cinf{\groupoid{L}}$ invariant under left translations.
The map $\mathscr{E}:\sections{L/A}\to\sections{\groupoid{L}}$
induces a map $\mathcal{E}:\sections{S(L/A)}\to\enveloping{L}$ as follows.
The element $\mathcal{E}[x_1\odot x_2\odot\cdots\odot x_n]$ in $\enveloping{L}$ corresponding to
$x_1\odot x_2\odot\cdots\odot x_n$ in $\sections{S^n(L/A)}$
is the left invariant differential operator which
takes a function $f\in\cinf{\groupoid{L}}$ to the function
\[ \groupoid{L}\ni l\mapsto \left.\frac{d}{dt_1}\right|_0
\left.\frac{d}{dt_2}\right|_0 \cdots \left.\frac{d}{dt_n}\right|_0
f\big( l\cdot\mathscr{E}(t_1 x_1+t_2 x_2 + \cdots + t_n x_n) \big) \in \RR .\]
In particular, for any $x\in\sections{L/A}$ and $n\in\NN$, we have
\[ \big(\mathcal{E}[x^n] f\big)(l)
=\left.\frac{d}{dt_1}\right|_0 \left.\frac{d}{dt_2}\right|_0 \cdots
\left.\frac{d}{dt_n}\right|_0 f\Big( l\cdot\mathscr{E}\big((t_1+t_2+\cdots+t_n)x\big)\Big)
=\left.\frac{d^n}{dt^n}f\big( l\cdot\mathscr{E}(tx)\big)\right|_0 .\]
We declare that $\mathcal{E}[r]=r$ for every $r\in R=C^\infty(M)$
seen as an element of degree 0 in $\sections{S(L/A)}$.

It is easy to check that $\mathcal{E}$ is a morphism of left $R$-modules.

\begin{lemma}\label{Oman}
For all $x\in\sections{L/A}$ and $n\in\NN$, we have $\mathcal{E}[x]=\overrightarrow{j(x)}$ and
\[ \mathcal{E}[x^{n+1}]=\overrightarrow{j(x)}\circ\mathcal{E}[x^n]-\mathcal{E}[\nabla_{j(x)}(x^n)] .\]
\end{lemma}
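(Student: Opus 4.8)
The plan is to establish the two assertions in turn. The identity $\mathcal{E}[x]=\overrightarrow{j(x)}$ is immediate from the definition of $\mathcal{E}$ together with the first part of Lemma~\ref{Niger}: for every $f\in\cinf{\groupoid{L}}$ and $l\in\groupoid{L}$,
\[ (\mathcal{E}[x]f)(l)=\left.\frac{d}{dt}\right|_0 f\big(l\cdot\mathscr{E}(tx)\big)=\overrightarrow{j(x)}|_l(f) .\]
For the recursion, the key idea is to split the single $(n+1)$-st derivative defining $\mathcal{E}[x^{n+1}]$ into two variables. Since, for any smooth function $\phi$ of one real variable, $\phi^{(n+1)}(0)=\left.\frac{d}{d\tau}\right|_0\left.\frac{d^n}{ds^n}\right|_0\phi(\tau+s)$, the definition of $\mathcal{E}$ gives
\[ (\mathcal{E}[x^{n+1}]f)(l)=\left.\frac{d}{d\tau}\right|_0\left.\frac{d^n}{ds^n}\right|_0 f\big(l\cdot\mathscr{E}((\tau+s)x)\big) .\]
Applying the group-like relation $\mathscr{E}((\tau+s)x)=\mathscr{E}(\tau x)\cdot\mathscr{E}(s\,x_\tau)$ of the second part of Lemma~\ref{Niger}, and recognizing the inner $n$-fold derivative as the defining expression for $\mathcal{E}[x_\tau^n]$ evaluated at the point $l\cdot\mathscr{E}(\tau x)$, I would rewrite this as
\[ (\mathcal{E}[x^{n+1}]f)(l)=\left.\frac{d}{d\tau}\right|_0 \big(\mathcal{E}[x_\tau^n]f\big)\big(l\cdot\mathscr{E}(\tau x)\big) .\]

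Now $\tau$ enters the right-hand side in two places, through the operator $\mathcal{E}[x_\tau^n]$ and through the evaluation point $l\cdot\mathscr{E}(\tau x)$, so I would split the total $\tau$-derivative by the Leibniz rule into two contributions. In the contribution differentiating only the evaluation point, the section is frozen at $x_0=x$; since $\mathscr{E}(0)$ is the identity section we have $l\cdot\mathscr{E}(0)=l$, and the first part of Lemma~\ref{Niger} identifies this contribution with $\big(\overrightarrow{j(x)}(\mathcal{E}[x^n]f)\big)(l)=\big(\overrightarrow{j(x)}\circ\mathcal{E}[x^n]\,f\big)(l)$, which is the first term of the asserted formula. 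In the contribution differentiating only the operator, the evaluation point is frozen at $l$, leaving $\left.\frac{d}{d\tau}\right|_0\big(\mathcal{E}[x_\tau^n]f\big)(l)$. Here I would interchange the derivative with the $R$-linear map $\mathcal{E}$ and then invoke the Leibniz rule for the symmetric product together with Lemma~\ref{Denmark}, which supplies $\left.\frac{dx_\tau}{d\tau}\right|_0=-\nabla_{j(x)}x$, to obtain
\[ \left.\frac{d}{d\tau}\right|_0 x_\tau^n=-n\,x^{n-1}\odot\nabla_{j(x)}x=-\nabla_{j(x)}(x^n) ,\]
the last equality being the Leibniz rule for $\nabla$ on $\sections{S^n(L/A)}$. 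This contribution is therefore $-\big(\mathcal{E}[\nabla_{j(x)}(x^n)]f\big)(l)$, and summing the two contributions yields $\mathcal{E}[x^{n+1}]=\overrightarrow{j(x)}\circ\mathcal{E}[x^n]-\mathcal{E}[\nabla_{j(x)}(x^n)]$.

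The main obstacle I anticipate is the analytic justification of the two interchange steps: splitting the total $\tau$-derivative of $(\tau,l')\mapsto\big(\mathcal{E}[x_\tau^n]f\big)(l')$ into its two partial derivatives, and commuting $\left.\frac{d}{d\tau}\right|_0$ with $\mathcal{E}$. Both rest on the joint smoothness of the family $\tau\mapsto x_\tau\in\sections{L/A}$, hence of $\tau\mapsto x_\tau^n\in\sections{S^n(L/A)}$, and of the evaluation map, together with the fact that $\mathcal{E}$ restricted to $\sections{S^n(L/A)}$ produces differential operators of fixed finite order, so that the limit in $\tau$ commutes with the finitely many fiber derivatives entering the definition of $\mathcal{E}$. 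Once this smoothness is secured, the remaining work consists of the two Leibniz rules and the bookkeeping that matches $\left.\frac{d}{d\tau}\right|_0 x_\tau^n$ with $-\nabla_{j(x)}(x^n)$, which are routine.
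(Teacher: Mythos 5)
Your proposal is correct and follows essentially the same route as the paper: both split the derivative defining $\mathcal{E}[x^{n+1}]$ using the group-like relation $\mathscr{E}((\tau+s)x)=\mathscr{E}(\tau x)\cdot\mathscr{E}(s\,x_\tau)$ from Lemma~\ref{Niger}, apply the Leibniz rule to separate the contribution of the moving evaluation point (yielding $\overrightarrow{j(x)}\circ\mathcal{E}[x^n]$ via the first part of Lemma~\ref{Niger}) from that of the moving operator (yielding $-\mathcal{E}[\nabla_{j(x)}(x^n)]$ via Lemma~\ref{Denmark}). The only cosmetic difference is that you package the inner $n$ derivatives into a single $n$-th derivative in one variable $s$, whereas the paper keeps $n$ separate variables $t_1,\dots,t_n$; these formulations are interchangeable, as the paper itself notes when computing $\big(\mathcal{E}[x^n]f\big)(l)=\left.\tfrac{d^n}{dt^n}f\big(l\cdot\mathscr{E}(tx)\big)\right|_0$.
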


\begin{proof}
Indeed, it follows from Lemma~\ref{Niger} that, for all $f\in\cinf{\groupoid{L}}$,
\[ \big(\mathcal{E}[x] f\big)(l)=\overrightarrow{j(x)}|_l (f) \]
and
\begin{align*}
\big(\mathcal{E}[x^{n+1}] f\big)(l)
=&\left.\frac{d}{dt_0}\right|_0 \left.\frac{d}{dt_1}\right|_0 \cdots \left.\frac{d}{dt_n}\right|_0
f\Big( l\cdot\mathscr{E}\big((t_0+t_1+\cdots+t_n)x\big)\Big) \\
=&\left.\frac{d}{dt_0}\right|_0 \left.\frac{d}{dt_1}\right|_0 \cdots \left.\frac{d}{dt_n}\right|_0
f\Big( l\cdot\mathscr{E}(t_0 x)\cdot\mathscr{E}\big((t_1+t_2+\cdots+t_n)x_{t_0}\big)\Big) \\
=&\left.\frac{d}{dt_0}\right|_0 \Big(\mathcal{E}\big[(x_{t_0})^n\big]f\Big)
\big(l\cdot\mathscr{E}(t_0 x)\big) \\
=&\left.\frac{d}{dt_0}\right|_0 \Big(\mathcal{E}\big[x^n\big]f\Big)
\big(l\cdot\mathscr{E}(t_0 x)\big)
+\left.\frac{d}{dt_0}\right|_0 \Big(\mathcal{E}\big[(x_{t_0})^n\big]f\Big) \big(l\big) \\
=&\Big(\overrightarrow{j(x)}\big(\mathcal{E}[x^n]f\big)\Big)(l)
+\Big(\mathcal{E}\Big[\left.\frac{d}{dt_0}(x_{t_0})^n\right|_0\Big] f\Big)(l) \\
=&\Big(\overrightarrow{j(x)}\big(\mathcal{E}[x^n]f\big)\Big)(l)
-\big(\mathcal{E}\big[\nabla_{j(x)}(x^n)\big]f\big)(l)
.\end{align*}
The last equality above is a consequence of Lemma~\ref{Denmark}.
\end{proof}

\begin{proof}[Proof of Theorem~\ref{Madagascar}]
One easily observes that, by Lemma~\ref{Krakow},
\[ \sections{S(L/A)}\cong\DO(L/A,M)\xto{\exp^{\nabla,j}_*}
\DO(\groupoid{L}/\groupoid{A},M)\cong\frac{\enveloping{L}}{\enveloping{L}\sections{A}} \]
is the composition of $\mathcal{E}$ with the projection
$\enveloping{L}\to\frac{\enveloping{L}}{\enveloping{L}\sections{A}}$ the same way
$\exp^{\nabla,j}$ is the composition of $\bm{E}$ with the projection
$\bm{q}:\groupoid{L}\to\groupoid{L}/\groupoid{A}$.
Then Lemma~\ref{Oman} asserts that
\[ \exp^{\nabla,j}_*:\sections{S(L/A)}\to\frac{\enveloping{L}}{\enveloping{L}\sections{A}} \]
is precisely the map $\pbw^{\nabla,j}$ inductively defined in Theorem~\ref{Nairobi}.
\end{proof}

\begin{remark}
When $L=T_M$ and $A$ is the trivial Lie subalgebroid of $L$ of rank 0,
the $\pbw^{\nabla,j}$ map of Theorem~\ref{Madagascar}
is the inverse of the so called `normal order complete symbol map,'
which is an isomorphism from the space $\enveloping{T_M}$ of differential operators on $M$
to the space $\sections{S(T_M)}$ of fiberwise polynomial functions on $T^\vee_M$.
The complete symbol map was generalized to arbitrary Lie algebroids over $\RR$
by Nistor, Weinstein, and one of the authors~\cite{MR1687747}.
It played an important role in quantization theory~\cite{MR706215,MR1687747}.
\end{remark}

\section{Kapranov dg-manifolds}
\label{Yamoussoukro}

As an important application of the 
PBW map of Theorem~\ref{Madagascar} for a Lie pair $(L,A)$,
we construct, in Sections~\ref{Yamoussoukro} and~\ref{Abidjan},
a homological vector field on the graded manifold $A[1]\oplus L/A$
turning it into what we call a Kapranov dg-manifold.

\subsection{Definition}

Let $A$ be a Lie algebroid over a smooth manifold $M$
and $R$ be the algebra of smooth functions on $M$ valued in $\KK$.
The Chevalley--Eilenberg differential
\[ d_A:\sections{\Lambda^k A^\vee}\to\sections{\Lambda^{k+1} A^\vee} \]
defined by
\begin{multline*}
\label{eqCE}
\big(d_A \alpha\big)(a_0,a_1,\cdots,a_k)=
\sum_{i=0}^{k} (-1)^i \anchor(a_i)\big(\alpha(a_0,\cdots,\widehat{a_i},\cdots,a_k)\big) \\
+\sum_{i<j}(-1)^{i+j}\alpha(\lie{a_i}{a_j},a_0,\cdots,\widehat{a_i},\cdots,\widehat{a_j},\cdots,a_k)
\end{multline*}
and the wedge product
make $\bigoplus_{k\geqslant 0}\sections{\Lambda^k A^\vee}$
into a differential graded commutative $\KK$-algebra~\cite{MR2157566}.

The Chevalley--Eilenberg covariant differential
associated with a representation $\nablanatural$ of a Lie algebroid $A \to M$
of rank $n$ on a vector bundle $E\to M$ is the $\KK$-linear operator
\[ d_A^{\nablanatural}: \sections{\Lambda^k A^\vee\otimes E}
\to\sections{\Lambda^{k+1} A^\vee\otimes E} \]
that takes a section $\alpha\otimes e$ of $\Lambda^k A^\vee\otimes E$ to
\[ d_A^{\nablanatural}(\alpha\otimes e)=(d_A\alpha)\otimes e
+(-1)^{k}\sum_{j=1}^{n}(\alpha\wedge\beta_j)\otimes \nablanatural_{b_j}e ,\]
where $b_1,b_2,\dots,b_n$ and $\beta_1,\beta_2,\dots,\beta_n$
are any pair of dual local frames for the vector bundles $A$ and $A^\vee$.
The connection $\nablanatural$ being flat, $d_A^{\nablanatural}$ is a coboundary operator:
$d_A^{\nablanatural}\circ d_A^{\nablanatural}=0$.

Recall that a \emph{dg-manifold} is a $\ZZ$-graded manifold
endowed with a vector field $Q$ of degree $+1$ satisfying $\lie{Q}{Q}=0$.
Such a vector field $Q$ is said to be homological~\cite{MR1480150}.
In the literature, dg-manifolds are also called $Q$-manifolds~\cite{MR1432574,MR2062626}.
We refer the reader to~\cite{MR3293862,MR1958834} for details.
Below are several standard examples of dg-manifolds.

\begin{example}
Let $A\to M$ be a vector bundle.
According to Va{\u\i}ntrob~\cite{MR1480150},
a Lie algebroid structure on $A$
is equivalent to a dg-manifold structure on $A[1]$.
The homological vector field on $A[1]$ is the Chevalley--Eilenberg differential $d_{A}$.
\end{example}

\begin{example}[\cite{MR2163405}]
Suppose $\frakg=\bigoplus_{i\in \ZZ}\frakg_i$ is a finite-dimensional $\ZZ$-graded vector space.
Then a dg-manifold structure on $\frakg$ is equivalent
to a curved $L_\infty[1]$ algebra structure on $\frakg$.
\end{example}

We are now ready to introduce the main object of the paper.

\begin{definition}
A \emph{Kapranov dg-manifold} consists of a pair of vector bundles
$A$ and $E$ over a common base manifold
together with a homological vector field on each one of the graded manifolds $A[1]$ and $A[1]\oplus E$
such that both the inclusion $A[1]\into A[1]\oplus E$ and the projection $A[1]\oplus E\onto A[1]$
are morphisms of dg-manifolds.
\end{definition}

Note that the requirement that $A[1]$ be a dg-manifold implies that $A$ must be a Lie algebroid.

\begin{example}\label{Yellowknife}
Given a representation $\nablanatural$ of a Lie algebroid $A$ over a vector bundle $E$,
the Chevalley--Eilenberg differentials $d_A$ and $d_A^{\nablanatural}$ provide
a pair of homological vector fields on $A[1]$ and $A[1]\oplus E$
compatible with the inclusion and projection maps.
The resulting Kapranov dg-manifold $(A[1]\oplus E,d_A^{\nablanatural})$
is said to be \emph{linear}.
\end{example}

\begin{definition}
A \emph{morphism of Kapranov dg-manifolds} from $A[1]\oplus E$ to $A'[1]\oplus E'$
consists of a pair $(\Phi,\phi)$ of morphisms of dg-manifolds
$\Phi:A[1]\oplus E \to A'[1]\oplus E'$ and $\phi:A[1]\to A'[1]$
such that the following two diagrams commute:
\[ \begin{array}{cc}
\begin{tikzcd} A[1]\oplus E \arrow[r, "\Phi"] & A'[1]\oplus E' \\
A[1] \arrow[r, "\phi"'] \arrow[u, hook] & A'[1] \arrow[u, hook] \end{tikzcd} &
\begin{tikzcd} A[1]\oplus E \arrow[r, "\Phi"] \arrow[d, two heads] &
A'[1]\oplus E' \arrow[d, two heads] \\
A[1] \arrow[r, "\phi"'] & A'[1] .\end{tikzcd}
\end{array} \]
Such a morphism is said to \emph{fix the dg-submanifold $A[1]$}
if $A=A'$ and $\phi=\id_{A[1]}$, in which case
it is simply denoted by $\Phi$.
\newline
An \emph{isomorphism of Kapranov dg-manifolds} is a morphism $(\Phi,\phi)$ of Kapranov dg-manifolds
where both $\Phi$ and $\phi$ are dg-manifold isomorphisms.
\end{definition}

\begin{definition}
A Kapranov dg-manifold $(A[1]\oplus E,D)$ is said to be \emph{linearizable}
if there exists a representation $\nablanatural$ of the Lie algebroid $A$ on the vector bundle $E$
and an isomorphism of Kapranov dg-manifolds from $(A[1]\oplus E,D)$
to the linear Kapranov dg-manifold $(A[1]\oplus E,d_A^{\nablanatural})$
fixing the dg-submanifold $(A[1],d_A)$.
\end{definition}

\subsection{Alternative descriptions}

Given a vector bundle $E$ over the manifold $M$, recall
that deconcatenation (see Equation~\eqref{London})
defines a comultiplication on $\sections{S(E)}$ while
the symmetric tensor product
defines a multiplication on $\sections{S(E^\vee)}$.
Let $\mathfrak{e}$ denote the ideal of the graded algebra
$\sections{S(E^\vee)}$ generated by $\sections{E^\vee}$.
The algebra $\Hom_R\big(\sections{S(E)},R\big)$
dual to the $R$-coalgebra $\sections{S(E)}$
is the $\mathfrak{e}$-adic completion of $\sections{S(E^\vee)}$.
It will be denoted by $\sections{\hat{S}(E^\vee)}$.
Equivalently, one can think of the completion $\hat{S}(E^\vee)$ of $S(E^\vee)$
as a bundle of algebras over $M$.
Therefore we have a pairing
$\duality{\varepsilon}{e}\in R$ for any
$\varepsilon\in\sections{\hat{S}(E^\vee)}$
and $e\in \sections{S(E)}$, which can be obtained as follows.
Consider $e$ as a fiberwise polynomial on $E^\vee$ and $\varepsilon$
a fiberwise formal differential operator on $E^\vee$ with constant coefficients.
Then $\duality{\varepsilon}{e}$ is the function on $M$ obtained
by applying $\varepsilon$ on $e$ fiberwise.

\pagebreak[2]
\begin{remark}\label{Portugal}
It will be useful to keep the following obvious
facts in mind.
\begin{enumerate}
\item Let $\varepsilon\in\sections{\hat{S}(E^\vee)}$.
Then $\varepsilon\in\sections{\hat{S}^{\geqslant n}(E^\vee)}$
if and only if $\duality{\varepsilon}{e}=0$ for all $e\in\sections{S^{<n}(E)}$.
\item Let $e \in \sections{S(E)}$.
Then $e\in\sections{S^{\leqslant n}(E)}$ if and only if $\duality{\varepsilon}{e}=0$ for all
$\varepsilon\in\sections{\hat{S}^{>n}(E^\vee)}$.
\end{enumerate}
\end{remark}

Let $\nablanatural$ be a representation of $A$ on $E$.
Consider the space $\XXfv(E)$ of formal vertical vector fields on $E$ along the zero section,
which are, by definition, $R$-linear derivations of the algebra
of fiberwise formal functions on $E$ along the zero section.
Since the latter can be identified with $\sections{\hat{S}(E^\vee)}$,
the space $\XXfv(E)$ is naturally identified with $\sections{\hat{S}(E^\vee)\otimes E}$.
The Lie algebra structure on $\XXfv(E)$
extends to a graded Lie algebra structure on $\Gamma(\Lambda A^\vee)\otimes_R\XXfv(E)$,
which is isomorphic to $\bigoplus_{n\geqslant 0}\sections{\Lambda^n(A^\vee)
\otimes\hat{S}(E^\vee)\otimes E}$.
Since $E$ is an $A$-module, so is $\hat{S}(E^\vee)\otimes E$.
Therefore $\bigoplus_{n\geqslant 0}\sections{\Lambda^n(A^\vee)
\otimes\hat{S}(E^\vee)\otimes E}$ is equipped
with the corresponding Chevalley--Eilenberg differential
\begin{equation}\label{Rwanda}
d_A^{\nablanatural}:\sections{\Lambda^\bullet(A^\vee)\otimes\hat{S}(E^\vee)\otimes E} \to
\sections{\Lambda^{\bullet+1}(A^\vee)\otimes\hat{S}(E^\vee)\otimes E}.
\end{equation}

\begin{lemma}\label{Sweden}
Given a representation $\nablanatural$ of $A$ on $E$, the graded vector space
\[ \bigoplus_{n\geqslant 0}\sections{\Lambda^n(A^\vee)\otimes\hat{S}(E^\vee)\otimes E} ,\]
together with the differential \eqref{Rwanda} and the graded Lie bracket described above,
is a differential graded Lie algebra.
\end{lemma}

Here is the main result of this section.

\begin{proposition}
\label{Tasmania}
Given a Lie algebroid $A$ and a vector bundle $E$ over $M$,
the following supplemental data are equivalent.
\begin{enumerate}
\item\label{uno} A homological vector field $D$ on
$A[1]\oplus E$ making it a Kapranov dg-manifold.
\item\label{quattro} A degree $+1$ derivation $D$ of the graded algebra
$\bigoplus_{n\geqslant 0}\sections{\Lambda^n A^\vee\otimes\hat{S}(E^\vee)}$,
which preserves the filtration
\begin{equation}\label{Uruguay}
\cdots\into\sections{\Lambda A^\vee\otimes\hat{S}^{\geqslant 2}(E^\vee)}
\into\sections{\Lambda A^\vee\otimes\hat{S}^{\geqslant 1}(E^\vee)}\into
\sections{\Lambda A^\vee\otimes\hat{S}(E^\vee)}
\end{equation}
and satisfies $D^2=0$ and $D(\alpha\otimes 1)=d_A(\alpha)\otimes 1$,
for all $\alpha\in\sections{\Lambda A^\vee}$.
\item\label{due} An $A$-action $\delta:\sections{A}\times\sections{S(E)}\to
\sections{S(E)}$
by coderivations such
that $\delta_a(1)=0$, for all $a\in\sections{A}$.
\item\label{tre} An $A$-action
$\delta^*:\sections{A}\times\sections{\hat{S}(E^\vee)}
\to\sections{\hat{S}(E^\vee)}$
by derivations such that
$\sections{\hat{S}^{\geqslant 1}(E^\vee)}$ is stable under
$\delta^*_a$, for all $a\in\sections{A}$.
\item\label{cinque} A representation
$\nablanatural:\sections{A}\times\sections{E}\to\sections{E}$ of $A$ on $E$
together with a solution $\cR\in\sections{A^\vee\otimes\hat{S}^{\geqslant 2}(E^\vee)\otimes E}$
of the Maurer--Cartan equation
\begin{equation}\label{eq:MC}
d_A^{\nablanatural}\cR+\tfrac{1}{2}\lie{\cR}{\cR}=0
\end{equation}
of the dgla
$\bigoplus_{n\geqslant 0}\sections{\Lambda^n A^\vee\otimes\hat{S}(E^\vee)\otimes E}$
of Lemma~\ref{Sweden}.
\item\label{sei}
An $L_\infty[1]$ algebra structure on the graded vector space
$ \bigoplus_{n\geqslant 0}\sections{\Lambda^n A^\vee\otimes E} $
defined by a sequence $(\lambda_k)_{k\in\NN}$ of multibrackets
\[ \lambda_k:S^k\big(\sections{\Lambda A^\vee\otimes E}\big)
\to\sections{\Lambda A^\vee\otimes E}[1] \]
such that each $\lambda_k$, with $k\geqslant 2$,
is $\sections{\Lambda A^\vee}$-multilinear, and $\lambda_1$
is the Chevalley--Eilenberg differential $d_A^{\nabla}$
associated with a representation $\nabla$ of $A$
on $E$.
\end{enumerate}
The operators $D$, $\delta$, $\delta^*$,
the multibrackets $(\lambda_k)_{k\in\NN}$,
the representation $\nablanatural$,
and the Maurer--Cartan element $\cR$
are related by Equations~\eqref{Poland} to~\eqref{Switzerland} below.
\end{proposition}

\begin{proof}
\textbf{(\ref{uno})$\Rightarrow$(\ref{quattro})}\quad
The projection $A[1]\oplus E\onto A[1]$ is
a morphism of dg-manifolds if and only if
\[ D(\alpha\otimes 1)=d_A \alpha \otimes 1, \quad\forall\alpha\in\sections{\Lambda A^\vee} .\]
The inclusion $A[1]\into A[1]\oplus E$ is a morphism
of dg-manifolds if and only if
\begin{equation}\label{eq:meaningInj} \pi\circ D=d_A\circ\pi ,\end{equation} where
$\pi:\sections{\Lambda A^\vee\otimes \hat{S}(E^\vee)}
\to \sections{\Lambda A^\vee}$ is the map
\[ \pi(\alpha\otimes\sigma)= \begin{cases}
\sigma \alpha & \text{if }\sigma\in\sections{S^0(E^\vee)}=R \\
0 & \text{if } \sigma\in\sections{\hat{S}^{\geqslant 1}(E^\vee)}
.\end{cases} \]
Therefore, for every $\alpha\in\sections{\Lambda A^\vee}$
and $\varepsilon\in\sections{E^\vee}$, we have
\[ \pi\big(D(\alpha\otimes\varepsilon)\big)
=d_A\big(\pi(\alpha\otimes\varepsilon)\big)
=d_A(0)=0 .\] Hence,
$D(\alpha\otimes\varepsilon)\in\sections{\Lambda A^\vee
\otimes \hat{S}^{\geqslant 1}(E^\vee)}$.
By derivation property, $D$ preserves
the filtration~\eqref{Uruguay}.

\textbf{(\ref{quattro})$\Rightarrow$(\ref{uno})}\quad
By assumption, $D$ is a homological vector field on $A[1] \oplus E$ such that the
projection $A[1]\oplus E\onto A[1]$ is
a morphism of dg-manifolds.
To prove that the inclusion $A[1]\into A[1]\oplus E$ is a morphism
of dg-manifolds, it suffices to prove that
Equation~\eqref{eq:meaningInj} holds.
For all $\alpha\in\sections{\Lambda A^\vee}$ and
$\sigma\in\sections{\hat{S}^{\geqslant 1}(E^\vee)}$, we have
$D(\alpha\otimes\sigma)\in\sections{
\Lambda A^\vee\otimes\hat{S}^{\geqslant 1}(E^\vee)}$
since $D$ preserves the filtration~\eqref{Uruguay}.
Therefore,
$ \pi\big(D(\alpha\otimes\sigma)\big)=0=
d_A\big(\pi(\alpha\otimes\sigma)\big) $.
Moreover, for all
$f\in\sections{S^0(E^\vee)}=R$, we have
\[ \pi\big(D(\alpha\otimes f)\big)
=\pi\big(D(f\alpha\otimes 1)\big)
=\pi\big(d_A(f\alpha)\otimes 1\big)
=d_A\big(\pi(\alpha\otimes f)\big) .\]
Hence Equation~\eqref{eq:meaningInj} holds.

\textbf{(\ref{due})$\Leftrightarrow$(\ref{tre})}\quad
The $A$-actions $\delta$ and $\delta^*$ determine each other through the relation
\begin{equation}
\label{Poland}
\anchor(a)\duality{\varepsilon}{e}=\duality{\delta^*_a(\varepsilon)}{e}+
\duality{\varepsilon}{\delta_a(e)} ,\quad\forall a\in\sections{A},
\varepsilon\in\sections{\hat{S}(E^\vee)}, e\in\sections{S(E)} .
\end{equation}
Obviously, $\delta_a$ is a coderivation if and only if $\delta^*_a$ is
a derivation.
Assume that $\delta_a$ is a coderivation.
According to Proposition~\ref{shark},
$\delta_a(1)=0$
if and only if $\sections{S^{< k}(E)}$ is $\delta_a$-stable
for all $k\in\NN$.
As stated in Remark~\ref{Portugal},
$\sections{\hat{S}^{\geqslant k}(E^\vee)}$ is the annihilator of
$\sections{S^{< k}(E)}$.
Hence $\sections{S^{< k}(E)}$ is $\delta_a$-stable
if and only if
$\sections{\hat{S}^{\geqslant k}(E^\vee)}$ is $\delta_a^*$-stable.
Since $\delta^*_a$ is a derivation, the latter is equivalent to $\delta^*_a(\varepsilon)\in\sections{
\hat{S}^{\geqslant 1}(E^\vee)}$
for all
$\varepsilon\in\sections{E^\vee}$.

\textbf{(\ref{tre})$\Rightarrow$(\ref{quattro})}\quad
Given an $A$-action
$\delta^*:\sections{A}\times\sections{\hat{S}(E^\vee)}\to\sections{\hat{S}(E^\vee)}$,
set \[ D(\alpha\otimes\sigma)=(d_A\alpha)\otimes\sigma
+\sum_j(\nu_j\wedge\alpha)\otimes(\delta^*_{v_j}\sigma) \]
for all $\alpha\in\sections{\Lambda A^\vee}$ and $\sigma\in\sections{\hat{S}(E^\vee)}$.
Here $v_1,\dots,v_l$ and $\nu_1,\dots,\nu_l$ are any pair
of dual local frames of the vector bundles $A$ and $A^\vee$.
The flatness of the action $\delta^*$ implies that $D^2=0$,
while the other properties of $D$ follow immediately from those of $\delta^*$.

\textbf{(\ref{quattro})$\Rightarrow$(\ref{tre})}\quad
Given $D$, the relation
\begin{equation}\label{Mali} \delta^*_a(\sigma)=i_a D(1\otimes\sigma), \quad\forall a\in\sections{A},
\forall\sigma\in\sections{\hat{S}(E^\vee)} \end{equation}
defines a map $\delta^*:\sections{A}\times\sections{\hat{S}(E^\vee)}\to\sections{\hat{S}(E^\vee)}$.
It is indeed an action, since
\[ \begin{split}
\delta^*_a (f\sigma) =& i_a D(1\otimes f\sigma) \\
=& i_a \big\{ D(f\otimes 1)\cdot(1\otimes\sigma)
+ (f\otimes 1)\cdot D(1\otimes\sigma) \big\} \\
=& i_a \Big\{ (d_A f)\otimes\sigma + f D(1\otimes\sigma) \Big\} \\
=&( \anchor(a)f) \,\sigma +f\delta^*_a(\sigma)
,\end{split} \]
for all $a\in\sections{A}$, $f\in C^\infty(M)$, and $\sigma\in\sections{\hat{S}(E^\vee)}$.

\textbf{(\ref{quattro})$\Rightarrow$(\ref{cinque})}\quad
Given $D$, set
\begin{equation}\label{Romania} \nablanatural_a \varepsilon
= \varpi\big(i_a D(1\otimes\varepsilon)\big) ,\quad \forall a\in\sections{A},
\forall\varepsilon\in\sections{E^\vee}, \end{equation}
where $\varpi$ denotes the canonical projection $\varpi:\hat{S}(E^\vee)\to S^1(E^\vee)$.

For all $f\in C^\infty(M)$, we have
\[ \begin{split}
D(1\otimes f\varepsilon)
=& D\big((f\otimes 1)\cdot(1\otimes\varepsilon)\big) \\
=& \big((d_A f)\otimes 1\big)\cdot (1\otimes\varepsilon)
+f D(1\otimes\varepsilon) \\
=& (d_A f)\otimes\varepsilon + f D(1\otimes\varepsilon)
.\end{split} \]
It follows that
\[ i_a D(1\otimes f\varepsilon)=(\anchor(a)f)\otimes\varepsilon
+f i_a D(1\otimes\varepsilon). \]
Applying $\varpi$ to both sides, we obtain
\[ \nablanatural_a(f\varepsilon)=(\anchor(a)f) \varepsilon
+f \nablanatural_a\varepsilon .\]
Thus the map $\nablanatural$ is an $A$-connection on $E^\vee$.
Since $D^2=0$ and $D$ preserves the filtration~\eqref{Uruguay}, we have
\[ d_A^{\nablanatural}(d_A^{\nablanatural}\varepsilon)
= \varpi\bigg(D\Big(\varpi\big(D
(1\otimes\varepsilon)\big)\Big)\bigg)
=\varpi\big(D^2(1\otimes\varepsilon)\big)
=0 \] for all $\varepsilon\in\sections{E^\vee}$.
Therefore, the connection $\nablanatural$ is flat.

We use the same symbol $\nablanatural$
to denote the induced flat connection on the dual bundle $E$.
It follows immediately from the identity
\[ \anchor(a)\duality{\varepsilon}{e}=
\duality{\nablanatural_a\varepsilon}{e}
+\duality{\varepsilon}{\nablanatural_a e}
,\quad\forall a\in\sections{A},\forall e\in\sections{E},
\forall\varepsilon\in\sections{E^\vee},\]
together with Equations~\eqref{Poland} to~\eqref{Romania} that
the diagram
\[ \begin{tikzcd}
\sections{S(E)} \arrow[r, "\delta_a"] &
\sections{S(E)} \arrow[d, two heads, "\varpi"] \\
\sections{E} \arrow[u, hook] \arrow[r, "\nabla_a"'] & \sections{E}
\end{tikzcd} \]
commutes.

Since both $D$ and $d_A^{\nablanatural}$ are derivations of
the algebra $\sections{\Lambda A^\vee\otimes \hat{S}(E^\vee)}$
and \[ D(\alpha\otimes 1)=(d_A\alpha)\otimes 1=
d_A^{\nablanatural}(\alpha\otimes 1) ,\]
it follows that
\[ \big(D-d_A^{\nablanatural}\big)(\alpha\otimes\sigma)
= (-1)^{n} (\alpha\otimes 1)
\cdot \big(D-d_A^{\nablanatural}\big)(1\otimes\sigma) \]
for all $\alpha\in\sections{\Lambda^n A^\vee}$ and
$\sigma\in\sections{\hat{S}(E^\vee)}$.
Therefore, since the derivation $D-d_A^{\nablanatural}$ satisfies
\[ \big(D-d_A^{\nablanatural}\big)(1\otimes\sigma)
\in\sections{A^\vee\otimes \hat{S}^{\geqslant k+1}(E^\vee)} \]
for all $\sigma\in\sections{S^k(E^\vee)}$, there exists
$\cR\in\sections{A^\vee\otimes\hat{S}^{\geqslant 2}(E^\vee)\otimes E}$ such that
\[ \big(D-d_A^{\nablanatural}\big)(\alpha\otimes\sigma)
= \sum_r (\nu_r\wedge\alpha)\otimes i_{v_r}\cR(\sigma) .\]
Here $v_1,\dots,v_l$ and $\nu_1,\dots,\nu_l$ are any pair
of dual local frames of the vector bundles $A$ and $A^\vee$,
and $i_{v_r}\cR$ is seen as a derivation of
the algebra $\sections{\hat{S}(E^\vee)}$.
In other words:
\begin{equation}\label{Scandinavia} \cR=D-d_A^{\nablanatural} .\end{equation}
Since $D^2=0$ and $(d_A^{\nablanatural})^2=0$, the Maurer-Cartan equation \eqref{eq:MC} follows.

\textbf{(\ref{cinque})$\Rightarrow$(\ref{quattro})}\quad
Given a representation $\nablanatural$ of $A$ on $E$ and
a solution $\cR$ of the Maurer--Cartan equation \eqref{eq:MC},
the derivation $D=d_A^{\nablanatural}+\cR$ satisfies all
requirements.

\textbf{(\ref{cinque})$\Rightarrow$(\ref{sei})}\quad
Let $\Pi:\frakh\to\fraka$ denote the natural projection
of the graded Lie algebra
\[ \frakh=\bigoplus_{n\geqslant 0}\sections{\Lambda^n A^\vee\otimes\hat{S}(E^\vee)\otimes E} \]
of formal vertical vector fields on $A[1]\oplus E$ onto
its abelian Lie subalgebra
\[ \fraka=\bigoplus_{n\geqslant 0}\sections{\Lambda^n A^\vee\otimes E} .\]
Since
\[ \Pi\lie{x}{y}=\Pi\lie{\Pi x}{y}+\Pi\lie{x}{\Pi y}
,\quad\forall x,y\in\frakh ,\]
the triple $(\frakh,\fraka,\Pi)$ is a \emph{V-algebra}
in the sense of Cattaneo \& Schätz~\cite[Definition~2.3]{MR2440258}.
It is easy to check that the degree $+1$ derivation $d_A^{\nablanatural}+\ad_{\cR}$
is an \emph{adapted derivation} \cite[Definition~2.4]{MR2440258}.
According to~\cite[Theorem~2.5]{MR2440258} (see also \cite{MR2163405}),
the graded vector space $\fraka=\sections{\Lambda A^\vee\otimes E}$
is an $L_\infty[1]$ algebra whose multibrackets are given by
\[ \lambda_k(x_1,\cdots,x_k)=\Pi\lie{\lie{\cdots\lie{\lie{(d_A^{\nablanatural}
+\ad_{\cR}\big)x_1}{x_2}}{x_3}\cdots}{x_{k-1}}}{x_k} .\]

It follows from a straightforward computation that
$\lambda_1=d_A^\nabla$,
and each $\lambda_k$ with $k\geqslant 2$ is $\sections{\Lambda(A^\vee)}$-multilinear
and given by:
\begin{equation}\label{Switzerland}
\lambda_k(\xi_1\otimes e_1,\cdots,\xi_k\otimes e_k)
=(-1)^{\abs{\xi_1}+\cdots+\abs{\xi_k}}
\xi_1\wedge\cdots\wedge\xi_k\wedge\cR_k(e_1,\cdots,e_k)
,\end{equation}
for all $e_1,\dots,e_k\in\sections{E}$ and all homogeneous elements
$\xi_1,\dots,\xi_k$ of $\sections{\Lambda A^\vee}$.
Here $\cR_k$ denotes the component of $\cR$ in
$\sections{A^\vee\otimes S^k(E^\vee)\otimes E}$.

\textbf{(\ref{sei})$\Rightarrow$(\ref{cinque})}\quad
For $k\geqslant 2$, the multibracket $\lambda_k$ is $\sections{\Lambda A^\vee}$-multilinear
and hence satisfies Equation~\eqref{Switzerland} for some
$\cR_k\in\sections{A^\vee\otimes S^k(E^\vee)\otimes E}$.
It follows from the generalized Jacobi identity that the fiberwise vertical vector field
$\cR:=\sum_{k\geqslant 2}\cR_k\in\frakh$ satisfies
\[ \Pi\lie{\cdots\lie{\lie{d_A^{\nablanatural}+\ad_\cR}
{d_A^{\nablanatural}+\ad_\cR}(e_1)}{e_2}\cdots}{e_n}=0
,\quad\forall e_1,\cdots,e_n\in\sections{E} .\]
Therefore, we have
\[ 0=\lie{d_A^{\nablanatural}+\ad_\cR}{d_A^{\nablanatural}+\ad_\cR}
=2 \ad_{d_A^{\nablanatural}\cR+\frac{1}{2}\lie{\cR}{\cR}} .\]
It thus follows that $d_A^{\nablanatural}\cR+\frac{1}{2}\lie{\cR}{\cR}=0$.
\end{proof}

As a consequence, we have the following:

\begin{proposition}\label{prop:AtiyahDg}
Each Kapranov dg-manifold $(A[1]\oplus E,D)$ determines
a cohomology class in $H_{\CE}^1(A;S^2(E^\vee)\otimes E)$.
\end{proposition}

\begin{proof}
Given a Kapranov dg-manifold $(A[1]\oplus E,D)$, 
let $\nablanatural$ be the corresponding representation 
of the Lie algebroid $A$ on the vector bundle $E$,
and let $\cR=\sum_{k\geqslant 2}\cR_k$ be the corresponding 
Maurer--Cartan element in
$\sections{A^\vee\otimes S^{\geqslant 2}(E^\vee)\otimes E}$ 
--- see Proposition~\ref{Tasmania}.
Projecting the Maurer--Cartan equation~\eqref{eq:MC} onto the component
$\sections{\Lambda^2 A^\vee\otimes S^2(E^\vee)\otimes E}$,
we obtain $d_A^{\nablanatural}\cR_2=0$.
Thus $\cR_2\in\sections{A^\vee\otimes S^2(E^\vee)\otimes E}$
is a Chevalley--Eilenberg $1$-cocycle for the representation of $A$ on $S^2(E^\vee)\otimes E$
induced by the representation $\nablanatural$.
Hence the Kapranov dg-manifold $(A[1]\oplus E,D)$ yields a
cohomology class $[\cR_2]\in H_{\CE}^1(A;S^2(E^\vee)\otimes E)$.
\end{proof}

We will say that $[\cR_2]\in H_{\CE}^1(A;S^2(E^\vee)\otimes E)$
is the \emph{characteristic class} of the Kapranov dg-manifold
$(A[1]\oplus E,D=d_A^{\nablanatural}+\cR)$.
Its meaning will become apparent in Corollary~\ref{Cantonese}.

Next, we give a characterization of Kapranov dg-manifold isomorphisms.

\begin{proposition}\label{Tagalog}
Let $(A[1]\oplus E,D^\sharp)$ and $(A[1]\oplus E,D^\flat)$ 
be two Kapranov dg-manifolds
inducing the same dg-manifold structure on $A[1]$.
The following three types of morphisms are equivalent:
\begin{enumerate}
\item Isomorphisms of Kapranov dg-manifolds
from $(A[1]\oplus E,D^\sharp)$ to $(A[1]\oplus E,D^\flat)$
fixing the dg-submanifold $A[1]$.
\item Automorphisms of the $R$-coalgebra $\sections{S(E)}$
which respect the filtration
\[ R=\sections{S^{\leqslant 0}(E)} \into \sections{S^{\leqslant 1}(E)} \into
\sections{S^{\leqslant 2}(E)} \into \sections{S^{\leqslant 3}(E)} \into \cdots \]
and intertwine the $A$-actions $\delta_\sharp$ and $\delta_\flat$
corresponding to $D^\sharp$ and $D^\flat$ respectively.
\item Automorphisms of the $R$-algebra $\sections{\hat{S}(E^\vee)}$
which respect the filtration
\[ \cdots\into\sections{\hat{S}^{\geqslant 3}(E^\vee)}
\into\sections{\hat{S}^{\geqslant 2}(E^\vee)}
\into\sections{\hat{S}^{\geqslant 1}(E^\vee)}
\into\sections{\hat{S}(E^\vee)} \]
and intertwine the $A$-actions $\delta_\sharp^*$ and ${\delta_\flat^*}$
corresponding to $D^\sharp$ and $D^\flat$ respectively.
\end{enumerate}
\end{proposition}

\begin{proof}
Observe that every automorphism of the graded manifold $A[1]\oplus E$
such that the diagrams
\[ \begin{tikzcd} A[1]\oplus E \arrow[r, "\Phi"] & A[1]\oplus E \\
A[1] \arrow[r, "\id"] \arrow[u, hook] & A[1] \arrow[u, hook] \end{tikzcd}
\qquad\text{and}\qquad
\begin{tikzcd} A[1]\oplus E \arrow[r, "\Phi"] \arrow[d, two heads] &
A[1]\oplus E \arrow[d, two heads] \\
A[1] \arrow[r, "\id"] & A[1] \end{tikzcd} \]
commute must be of the form
$\alpha\otimes\sigma\mapsto\alpha\otimes\Phi(\sigma)$,
for all $\alpha\in\sections{\wedge A^\vee}$ and $\sigma\in\sections{\hat{S}(E^\vee)}$.
Here $\Phi$ is an automorphism of the $R$-algebra $\sections{\hat{S}(E^\vee)}$
preserving the filtration as in the third assertion.
Such automorphisms of $\sections{\hat{S}(E^\vee)}$ are in one-to-one correspondence
with the automorphisms of the $R$-coalgebra $\sections{{S}(E)}$
preserving the filtration as in the second assertion.
The conclusion follows immediately.
\end{proof} 	

Note that an automorphism $\Phi$ of the filtered $R$-coalgebra $\sections{{S}(E)}$
is determined by a sequence $(\phi_k)_{k\in\NN}$ of bundle maps
$\phi_k: S^k(E) \to E$ such that $\phi_1$ is invertible.
The map $\phi_k$ is called the $k$-th Taylor coefficient of $\Phi$.
Abusing notations, we write $\Phi=(\phi_k)_{k\in\NN}$.
As a consequence of Proposition~\ref{Tagalog},
every Kapranov dg-manifold isomorphism from $(A[1]\oplus E,D^\sharp)$ 
to $(A[1]\oplus E,D^\flat)$ fixing the dg-submanifold $A[1]$ 
is determined by such a sequence of Taylor coefficients $(\phi_k)_{k\in\NN}$.

\begin{proposition}\label{prop:class_well_defined}
Let $\Phi=(\phi_k)_{k\in\NN}$ be an isomorphism of Kapranov dg-manifolds
from $(A[1]\oplus E,D^\sharp=d_A^{\nabla^\sharp}+\sum_{k\geqslant 2}\cR^\sharp_k)$
to $(A[1]\oplus E,D^\flat=d_A^{\nabla^\flat}+\sum_{k\geqslant 2}\cR^\flat_k)$
fixing the dg-submanifold $A[1]$.
Then, the first Taylor coefficient $\phi_1$ of $\Phi$ is necessarily
an isomorphism of $A$-modules from $(E,\nabla^\sharp)$ to $(E,\nabla^\flat)$.
Moreover, the induced isomorphism in Chevalley--Eilenberg cohomology
identifies the characteristic classes $[\cR_2^\sharp]$ and $[\cR_2^\flat]$.
\end{proposition}

\begin{proof}
It follows from Equation~\eqref{Romania} that $\phi_1$ is an isomorphism of $A$-modules.
We may thus assume without loss of generality that $\phi_1=\id$
and $\nabla^\flat=\nabla^\sharp=\nabla$.
Therefore, it suffices to prove that $[\cR^\sharp_2]=[\cR^\flat_2]$
in $H_{\CE}^1(A;S^2(E^\vee)\otimes E)$.
For all $\varepsilon\in\sections{E^\vee}$, we have
\[ (\id_{\sections{A^\vee}}\otimes\Phi^\vee)\Big(\big(d_A^\nabla+\cR^\sharp_2
+\sum_{k\geqslant 3}\cR^\sharp_k\big)(\varepsilon)\Big)
=(d^{\nabla}_A+\cR^\flat_2+\sum_{k\geqslant 3}\cR^\flat_k)\big(\Phi^\vee(\varepsilon)\big) \]
in $\sections{A^\vee\otimes\hat{S}(E^\vee)}$.
Projecting onto $\sections{A^\vee\otimes S^2(E^\vee)}$, we obtain
\[ \phi^\vee _2(d_A^\nabla(\varepsilon))+\cR^\sharp_2(\varepsilon)
=d_{A}^{\nabla}(\phi^\vee_2(\varepsilon))+\cR^\flat_2(\varepsilon) \]
and therefore $\cR^\sharp_2-\cR^\flat_2=d_A^\nabla(\phi^\vee_2)$,
where $\phi^\vee_2$ is now regarded as
a section of $S^2(E^\vee)\otimes E$.
The conclusion thus follows.
\end{proof}

\begin{corollary}\label{Cantonese}
If a Kapranov dg-manifold $A[1]\oplus E$ is linearizable,
then its characteristic class vanishes.
\end{corollary}

We note that the converse may not be true.
However, as we will see in Theorem~\ref{Montana},
for those Kapranov dg-manifolds stemming from Lie pairs,
the converse of Corollary~\ref{Cantonese} is indeed true,
and the characteristic class becomes the Atiyah class of the Lie pair.

\subsection{Kapranov dg-manifolds arising from Lie algebroid actions}
\label{Patagonia}

\subsubsection{The general case}

In this section, we describe a simple construction of Kapranov dg-manifolds.
Given a surjective submersion $\pi:X\to M$,
let $R$ denote the algebra of smooth functions on $M$ and
let $\XX_{\pi}(X)$ denote the space of all vector fields on $X$ that are $\pi$-projectable.
Recall \cite[Definition~2.3]{MR1037400}
that an \emph{action of a Lie algebroid $A\to M$ on $\pi:X\to M$} is a map
$\sections{A}\ni a\to a^\diamond\in\XX_{\pi}(X)$
that is both a morphism of Lie algebras
and a morphism of $R$-modules
and makes the following diagram commute:
\[ \begin{tikzcd}[row sep=tiny]
& \XX_{\pi}(X) \arrow[dd, "\pi_*"] \\
\sections{A} \arrow[ru, "\diamond"] \arrow[rd, "\anchor"'] & \\
& \XX(M)
.\end{tikzcd} \]
Assume that $\sections{A}\xto{\diamond}\XX_{\pi}(X)$
is an action of a Lie algebroid $A\to M$
on a surjective submersion $\pi:X\to M$
admitting a global section $\sigma:M\to X$,
which is preserved under the action,
i.e.\ $a^\diamond$ is tangent to $\sigma(M)$ for all $a\in\sections{A}$.
For any $a\in\Gamma(A)$, 
the derivation $a^\diamond$ of $C^\infty(X)$ determines
a coderivation $\delta_a$ of the coalgebra
$\DO(X,\sigma(M))$ of distributions on $X$
with support $\sigma(M)$.
Thus we obtain an $A$-action on
$\DO(X,\sigma(M))$ by coderivations,
which we call the \emph{$A$-action on
$\DO(X,\sigma(M))$ induced by $\diamond$}.

Let $N_\sigma\to M$ denote the normal bundle to the submanifold $\sigma(M)$ inside $X$.
Then one can construct a Kapranov dg-manifold structure on $A[1]\oplus N_\sigma$ as follows.

According to the tubular neighborhood theorem,
there exists a fiber-preserving diffeomorphism $\psi:N_\sigma\to X$
from a tubular neighborhood of the zero section of $N_\sigma\to M$
to a tubular neighborhood of the submanifold $\sigma(M)$ inside $X$ such that
\[ \begin{tikzcd}
N_\sigma \arrow[r, "\psi"] \arrow[d] & X \arrow[d, "\pi"'] \\
M \arrow[r, "\id"'] \arrow[u, bend left, "0"] & M \arrow[u, bend right, "\sigma"']
\end{tikzcd} \]
commutes.
The $\infty$-jets of smooth functions on $X$ along $\sigma(M)$ form an $R$-algebra,
which can be identified with $\sections{\hat{S}(N_\sigma^\vee)}$ via the diffeomorphism $\psi$.
Likewise, the $R$-coalgebra $\DO(X,\sigma(M))$ of distributions on $X$ with support $\sigma(M)$
can be identified with $\sections{S(N_\sigma)}$.

For every $a\in\Gamma(A)$,
since $a^\diamond$ is tangent to $\sigma(M)$,
the derivation $a^\diamond$ of $C^\infty(X)$ stabilizes the ideal of functions vanishing on $\sigma(M)$
and thus induces a derivation $\delta^*_a$ of $\sections{\hat{S}(N_\sigma^\vee)}$,
which stabilizes $\sections{\hat{S}^{\geqslant 1}(N_\sigma^\vee)}$.
The derivation $\delta^*_a$ is the dual map of a coderivation
$\delta_a$ of the symmetric $R$-coalgebra $\sections{S(N_\sigma)}$
satisfying $\delta_a(1)=0$. Therefore, according to Proposition~\ref{Tasmania},
$A[1]\oplus N_\sigma$ is a Kapranov dg manifold.
Furthermore, the Kapranov dg-manifold structures on $A[1]\oplus N_\sigma$
resulting from different choices of $\psi$ are all isomorphic according to Proposition~\ref{Tagalog}.
We summarize the above discussion in the following:

\begin{proposition}
\label{prop:KapranovOfLieAlgAction}
Every action of a Lie algebroid $A$
on a surjective submersion $\pi:X\to M$ preserving a section $\sigma:M\to X$
gives rise to a Kapranov dg-manifold structure
--- unique up to isomorphism ---
on the graded manifold $A[1]\oplus N_\sigma$.
\end{proposition}

\begin{remark}\label{garedeLyon}
It follows from Propositions~\ref{prop:KapranovOfLieAlgAction},
\ref{prop:AtiyahDg} and~\ref{prop:class_well_defined} that
every action of a Lie algebroid $A$ on a surjective submersion $\pi:X\to M$
preserving a section $\sigma:M\to X$ determines a cohomology class
in $H^1(A,S^2(N_\sigma^\vee)\otimes N_\sigma)$,
which we call its \emph{characteristic class}.
\end{remark}

\subsubsection{Module over a Lie algebroid}

Now let $E$ be an $A$-module.
It is clear that the normal bundle $N_0$ to the zero section inside the vector bundle $\pi:E\to M$
is canonically isomorphic to $E$ as a vector bundle.
Therefore, the coalgebra $D(E,0(M))$ of distributions on $E$
with support the zero section of $E$ is canonically identified to $\sections{S(E)}$

As explained in Section~\ref{SantaFe}, the representation
$\nabla:\sections{A}\times\sections{E}\to\sections{E}$
determines, by way of the corresponding
horizontal lifting map
$h:A\times_M E\to T_E$,
an $A$-action
\[ \begin{tikzcd}[row sep=tiny]
& \XX_{\pi}(E) \arrow[dd, "\pi_*"] \\
\sections{A} \arrow[ru, "\diamond"] \arrow[rd, "\anchor"'] & \\
& \XX(M)
\end{tikzcd} \]
on $E$, which preserves the zero section of $E$.
For all $m\in M$; $a\in\sections{A}$; and $e\in\sections{E}$, the tangent vector
$e_*\big(\anchor(a_m)\big)-h(a_m,e_m)$ is `vertical'
(i.e.\ belongs to the subspace $\ker(\pi_{*e_m})$
of $(T_E)_{e_m}$) and the canonical identification
of $\ker(\pi_{*e_m})$ with $E_m$ maps it to
$(\nabla_a e)_m$ --- see Equation~\eqref{XiAn}.
Furthermore, for all $a\in\sections{A}$
and $e\in E$, we have $a^\diamond_e=h(a_{\pi(e)},e)$
--- see Equation~\eqref{gazpacho}.

Let $\delta$ denote the $A$-action
by coderivations on $D(E,0(M))=\sections{S(E)}$
induced by $\sections{A}\xto{\diamond}\XX_{\pi}(E)$.

\begin{proposition}
Let $\pi:E\to M$ be an $A$-module
with representation \[ \nabla:\sections{A}\times\sections{E}\to\sections{E} .\]
Then the induced $A$-action by coderivations on the coalgebra
$D(E,0(M))$ of distributions on $E$ supported on the zero section
coincides with the representation
\[ \delta:\sections{A}\times\sections{S(E)}\to\sections{S(E)} \]
of $A$ on $S(E)$ defined by
\[ \delta_a(e_1\odot\cdots\odot e_n)
=\sum_{k=1}^n e_1\odot\cdots\odot e_{k-1}\odot
\nabla_a e_k\odot e_{k+1}\odot\cdots\odot e_n
,\]
for all $a\in\sections{A}$ and
$e_1,e_2,\dots,e_n\in\sections{E}$.
\end{proposition}

\begin{proof}

Denoting by $\lambda_\varepsilon$ the fiberwise
linear function on $E$ associated with a section
$\varepsilon$ of the dual vector bundle
$E^\vee\to M$ in the natural way, we obtain
\[ \duality{(d\lambda_\varepsilon)\circ e}{e_*(\anchor_a)-h(a,e)}
=\duality{\varepsilon}{\nabla_a e}, \]
where the left hand side is the duality pairing
between a section of $e^*(T_E)$ --- the pullback
of the tangent bundle $T_E\to E$ through the
map $e:M\to E$ --- and a section of the dual
bundle $e^*(T^\vee_E)$, while the right hand side
is the duality pairing of a section of $E$
with a section of $E^\vee$.
The last equation can be rewritten as
\[ \anchor_a \duality{\varepsilon}{e}
-a^\diamond_e(\lambda_\varepsilon)
=\duality{\varepsilon}{\nabla_a e} \]
or
\[ a^\diamond_e(\lambda_\varepsilon)
=\duality{\nabla_a\varepsilon}{e} ,\]
where $\nabla$ denotes both the $A$-connection on $E$
and the dual $A$-connection on $E^\vee$.

Thus, we obtain the following full
characterization of the vector field $a^\diamond$:
\begin{gather*}
a^\diamond(\lambda_\varepsilon)
=\lambda_{\nabla_a\varepsilon},
\qquad\forall\varepsilon\in\sections{E^\vee}; \\
a^\diamond(f\circ\pi)
=\big(\anchor_a(f)\big)\circ\pi,
\qquad\forall f\in C^\infty(M)
.\end{gather*}
It follows immediately that the diagram
\begin{equation}\label{Buchnov} \begin{tikzcd}
C^\infty(E) \arrow[d, "0^*"']
\arrow[r, "a^\diamond"] &
C^\infty(E) \arrow[d, "0^*"] \\
C^\infty(M) \arrow[r, "\anchor_a"'] &
C^\infty(M)
\end{tikzcd} \end{equation}
commutes for all $a\in\sections{A}$.
Here $0$ denotes the zero section of
the vector bundle $\pi:E\to M$.

Given $x\in\sections{E}$, let $\underline{x}$
denote the corresponding vertical and fiberwise constant vector field on $E$.
Since $\underline{x}(\lambda_\varepsilon)=
\duality{\varepsilon}{x}\circ\pi$ and
$\underline{x}(f\circ\pi)=0$, it is easy to
check that \[ \lie{a^\diamond}{\underline{x}}(\lambda_\varepsilon)
=\underline{\nabla_a x}(\lambda_\varepsilon),
\qquad \lie{a^\diamond}{\underline{x}}(f\circ\pi)=0,
\qquad\text{and}\qquad
\underline{\nabla_a x}(f\circ\pi)=0 .\]
Therefore, we obtain
\begin{equation}\label{Praha}
\lie{a^\diamond}{\underline{x}}
= \underline{\nabla_a x}
,\qquad\forall a\in\sections{A},
x\in\sections{E}
.\end{equation}

Recall that, to the symmetric product $x_1\odot x_2\odot\cdots\odot x_n$ in $\sections{S(E)}$ of
sections $x_1,x_2,\dots,x_n$ of $E\to M$,
corresponds the fiberwise differential operator
\[ C^\infty(E)\ni f \mapsto
0^*\big(\underline{x_1}\circ\underline{x_2}\circ\dots\circ\underline{x_n}(f)\big)
\in C^\infty(M) \]
in $\DO\big(E,0(M)\big)$.

Then, the fiberwise differential operator
in $\DO\big(E,0(M)\big)$ corresponding to
\[ \delta_a(x_1\odot\cdots\odot x_n)\in\sections{S(E)} \] is
\[ C^\infty(E)\ni f \mapsto \anchor_a
\big(0^*\big(\underline{x_1}\circ\underline{x_2}
\circ\dots\circ\underline{x_n}(f)\big)\big)
-0^*\big(\underline{x_1}\circ\underline{x_2}\circ\dots\circ
\underline{x_n}\circ a^\diamond(f)\big) \in C^\infty(M) .\]
It follows from the commutativity of
Diagram~\eqref{Buchnov} and from
Equation~\eqref{Praha} that
\[ \begin{split}
& \anchor_a
\big(0^*\big(\underline{x_1}\circ\underline{x_2}
\circ\dots\circ\underline{x_n}(f)\big)\big)
-0^*\big(\underline{x_1}\circ\underline{x_2}\circ\dots\circ\underline{x_n}\circ a^\diamond(f)\big) \\
=\strut & 0^*\big(a^\diamond\circ\underline{x_1}\circ\underline{x_2}
\circ\dots\circ\underline{x_n}(f)\big)
-0^*\big(\underline{x_1}\circ\underline{x_2}
\circ\dots\circ\underline{x_n}\circ a^\diamond(f)\big) \\
=\strut & \sum_{k=1}^n
0^*\big(\underline{x_1}\circ\cdots\circ
\lie{a^\diamond}{\underline{x_k}}\circ\dots
\circ\underline{x_n}(f)\big) \\
=\strut & \sum_{k=1}^n
0^*\big(\underline{x_1}\circ\cdots\circ
\underline{\nabla_a x_k}\circ\dots
\circ\underline{x_n}(f)\big)
.\end{split} \]
Hence, we conclude that
\[ \delta_a(x_1\odot\cdots\odot x_n)
= \sum_{k=1}^n x_1\odot\cdots\odot
\nabla_a x_k\odot\cdots\odot x_n .\qedhere\]
\end{proof}

As a consequence, the Kapranov dg-manifold structure on $A[1]\oplus E$
encoded by the $A$-action by coderivations $\delta$ on $\sections{S(E)}$
is the linear Kapranov dg manifold structure of Example~\ref{Yellowknife}
and its characteristic class vanishes.

In particular, we have the following

\begin{proposition}
\label{Bott_vf}
Given a Lie pair $(L,A)$, the $A$-action by coderivations $\delta^\dagger$ on $\sections{S(L/A)}$
--- the coalgebra of distributions on $L/A$ supported on the zero section ---
induced by the Bott representation $\nabla^{\Bott}$ of $A$ on $L/A$ satisfies
\[ \delta^\dagger_a(b_1\odot\cdots\odot b_n)=\sum_{k=1}^n b_1\odot\cdots\odot
b_{k-1}\odot\nabla^{\Bott}_a b_k\odot b_{k+1}\odot\cdots\odot b_n \]
for all $a\in\sections{A}$ and $b_1,b_2,\dots,b_n\in\sections{L/A}$.
Therefore, the Kapranov dg-manifold structure on $A[1]\oplus L/A$
arising from the Bott representation of $A$ on $L/A$ is linear
--- see Example~\ref{Yellowknife} ---
and its characteristic class must vanish.
\end{proposition}

\subsubsection{Groupoid homogeneous space}

To obtain a much more interesting Kapranov dg-manifold structure on $A[1]\oplus L/A$,
let us temporarily restrict ourselves to \emph{real} Lie algebroid pairs $(L,A)$,
i.e.\ let us work over the field $\RR$.
In this case, the Lie algebroid/groupoid analogues of Lie's fundamental theorems assert
the existence of a pair $(\groupoid{L},\groupoid{A})$ of local Lie groupoids,
which the Lie functor transforms into $L$ and $A$ respectively \cite{MR2795150}.
The Lie groupoid $\groupoid{A}$ acts from the right on the groupoid $\groupoid{L}$
and the resulting homogeneous space $\groupoid{L}/\groupoid{A}$ obviously contains
the unit space $M$ of $(\groupoid{L},\groupoid{A})$
as a distinguished embedded submanifold~\cite{MR1612164}.
Furthermore, the Lie groupoid $\groupoid{A}$ acts from the left on
$\groupoid{L}/\groupoid{A}$ and the induced $A$-action
on $\groupoid{L}/\groupoid{A}$ stabilizes the submanifold $M$.
The normal bundle $N_M$ to $M$ inside $\groupoid{L}/\groupoid{A}$ is naturally isomorphic to $L/A$
and, by way of the exponential map developed in Section~\ref{Urumqi},
can be identified to a tubular neighborhood of $M$ inside $\groupoid{L}/\groupoid{A}$.
Therefore, applying the construction of Proposition~\ref{prop:KapranovOfLieAlgAction}
to the aforementioned $A$-action on $\groupoid{L}/\groupoid{A}$ stabilizing $M$,
we can endow $A[1]\oplus L/A$ with a Kapranov dg-manifold structure.

More precisely, consider the usual left action
of the Lie algebroid $L$ on the (local) Lie groupoid $\groupoid{L}$:
\[ \sections{L}\ni l\longmapsto
\rivf{l}\in\XX(\groupoid{L}) .\]
Recall that $\rivf{l}$ denotes the right-invariant vector field on
the Lie groupoid $\groupoid{L}$ corresponding to the section $l$ of the Lie algebroid $L$:
\[ \rivf{l}_g = \left. \frac{d}{d\tau} \big(\exp(\tau l_{\source(g)})\big)^{-1}\cdot g \right|_0
, \quad\forall g\in\groupoid{L} .\]
Since $s_*(\rivf{l}_g)=\anchor(l_{s(g)})$, the vector field $\rivf{l}$ is $s$-projectable onto $M$.
Hence we have an action of $L$ on $s:\groupoid{L}\to M$.
Since $A$ is a Lie subalgebroid of $L$
and the right-invariant vector fields on $\groupoid{L}$
are $\bm{q}$-projectable onto $\groupoid{L}/\groupoid{A}$,
there exists an induced action of $A$
on $s:\groupoid{L}/\groupoid{A}\to M$:
\begin{equation}\label{Fedosov_inf_action}
\begin{tikzcd}[row sep=tiny]
& \XX_{s}(\groupoid{L}/\groupoid{A}) \arrow[dd, "s_*"] \\
\sections{A} \arrow[ru, "\ddagger"] \arrow[rd, "\anchor"'] & \\
& \XX(M)
\end{tikzcd}\end{equation}

For all $a\in\sections{A}$, the $s$-projectable vector field
$a^\ddagger=\bm{q}_*\big(\overleftarrow{i(a)}\big)\in\XX_s(\groupoid{L}/\groupoid{A})$
is tangent to the unit section
of $s:\groupoid{L}/\groupoid{A}\to M$.
Therefore, the derivation $a^\ddagger$ of $C^\infty(\groupoid{L}/\groupoid{A})$
induces a coderivation (denoted $\delta^\ddagger_a$) of the coalgebra
$\DO(\groupoid{L}/\groupoid{A},1(M))\cong\frac{\enveloping{L}}{\enveloping{L}\sections{A}}$
of distributions on $\groupoid{L}/\groupoid{A}$ with support the unit section
--- see Lemma~\ref{Krakow}.
Thus, the action \eqref{Fedosov_inf_action} induces an
$A$-action $\delta^\ddagger$ on
$\frac{\enveloping{L}}{\enveloping{L}\sections{A}}$ by coderivations.

For every $l\in\sections{L}$, multiplication by $l$
from the left in the universal algebra $\enveloping{L}$
is a coderivation of (the coalgebra) $\enveloping{L}$
which stabilizes the left ideal $\enveloping{L}\sections{A}$ of (the algebra) $\enveloping{L}$
and thus determines a coderivation of the coalgebra
$\frac{\enveloping{L}}{\enveloping{L}\sections{A}}$.
Hence $\frac{\enveloping{L}}{\enveloping{L}\sections{A}}$ is naturally an $A$-module:
indeed, the multiplication by $i(a)$ from the left in $\enveloping{L}$
descends to a representation of $A$ on $\frac{\enveloping{L}}{\enveloping{L}\sections{A}}$.

\begin{proposition}
\label{Fedosov_vf}
Let $(L,A)$ be a pair of \emph{real} Lie algebroids
with associated (local) Lie groupoids $\groupoid{L}$ and $\groupoid{A}$.
The $A$-action by coderivations $\delta^\ddagger$
on $\DO(\groupoid{L}/\groupoid{A},1(M))$
induced by the $A$-action \eqref{Fedosov_inf_action}
on $s:\groupoid{L}/\groupoid{A}\to M$ is precisely
the aforementioned natural $A$-module structure
on $\frac{\enveloping{L}}{\enveloping{L}\sections{A}}$:
\[ \delta^\ddagger_a (l_1\cdot l_2\cdot\cdots \cdot l_n\cdot\bm{1})=
i(a)\cdot l_1\cdot l_2\cdot\cdots \cdot l_n\cdot\bm{1} ,\]
for all $a\in\sections{A}$ and $l_1,l_2,\dots,l_n\in\sections{L}$.
\end{proposition}

\begin{proof}
As previously, let $\bm{q}$ denote canonical projection
$\bm{q}:\groupoid{L}\to\groupoid{L}/\groupoid{A}$,
let $1$ denote the unit map $1:M\to\groupoid{L}$,
and let $\livf{l}$ denote the left-invariant vector field
on $\groupoid{L}$ corresponding to the section $l$ of $L$:
\[ \livf{l}_g = \left. \frac{d}{d\tau} g\cdot\exp(\tau l_{\target(g)}) \right|_0
,\quad\forall g\in\groupoid{L} .\]

Recall that, to the image
$l_1\cdot l_2\cdot\cdots\cdot l_n\cdot
\bm{1}$ in $\frac{\enveloping{L}}{\enveloping{L}\sections{A}}$ of the product
$l_1\cdot l_2\cdot\cdots\cdot
l_n\in\enveloping{L}$
of sections $l_1,l_2,\dots,l_n$ of $L$,
corresponds the fiberwise differential operator
\[ C^\infty(\groupoid{L}/\groupoid{A})\ni f
\mapsto 1^*\big(\overrightarrow{l_1}\circ\overrightarrow{l_2}\circ\dots\circ\overrightarrow{l_n}
(\bm{q}^*f)\big)\in C^\infty(M) .\]
in $\DO(\groupoid{L}/\groupoid{A},1(M))$.

Then, the fiberwise differential operator in $\DO(\groupoid{L}/\groupoid{A},1(M))$
corresponding to
\[ \delta^\ddagger_a(l_1\cdot l_2\cdot\cdots\cdot l_n\cdot\bm{1})
\in\frac{\enveloping{L}}{\enveloping{L}\sections{A}} \] is
\[ C^\infty(\groupoid{L}/\groupoid{A})\ni f
\mapsto
\rho_a\Big(1^*\big(\overrightarrow{l_1}\circ\overrightarrow{l_2}\circ\dots\circ\overrightarrow{l_n}
(\bm{q}^*f)\big)\Big)
-1^*\big(\overrightarrow{l_1}\circ\overrightarrow{l_2}\circ\dots\circ\overrightarrow{l_n}
(\bm{q}^*a^\ddagger(f))\big)
\in C^\infty(M) .\]

Differentiating the identity
\[ \big(\exp(\tau l_m)\big)^{-1}
\cdot\exp(\tau l_m)=1_{\target(\exp(\tau l_m))} \]
with respect to the parameter $\tau$ at $0$,
we obtain
\[ \rivf{l}_{1_m}+\livf{l}_{1_m}=1_*\big(\anchor(l_m)\big) .\]

It follows that
\[ \begin{split}
& \rho_a\Big(1^*\big(\overrightarrow{l_1}\circ\overrightarrow{l_2}\circ\dots\circ\overrightarrow{l_n}
(\bm{q}^*f)\big)\Big)
-1^*\big(\overrightarrow{l_1}\circ\overrightarrow{l_2}\circ\dots\circ\overrightarrow{l_n}
(\bm{q}^*a^\ddagger(f))\big)
\\ =\strut & 1^*\big(1_*(\anchor_a)\circ\overrightarrow{l_1}
\circ\overrightarrow{l_2}\circ\dots\circ\overrightarrow{l_n}
(\bm{q}^*f)\big)
-1^*\big(\overrightarrow{l_1}\circ\overrightarrow{l_2}\circ\dots\circ\overrightarrow{l_n}
\circ\overleftarrow{i(a)}
(\bm{q}^* f)\big)
\\ =\strut & 1^*\big(1_*(\anchor_a)\circ\overrightarrow{l_1}
\circ\overrightarrow{l_2}\circ\dots\circ\overrightarrow{l_n}
(\bm{q}^*f)\big) -1^*\big(\overleftarrow{i(a)}
\circ\overrightarrow{l_1}\circ\overrightarrow{l_2}
\circ\dots\circ\overrightarrow{l_n}(\bm{q}^* f)\big)
\\ =\strut &
1^*\big(\overrightarrow{i(a)}
\circ\overrightarrow{l_1}
\circ\overrightarrow{l_2}\circ\dots\circ\overrightarrow{l_n}
(\bm{q}^* f)\big)
.\end{split} \]
Hence, we conclude that
\[ \delta^\ddagger_a (l_1\cdot l_2\cdot\cdots \cdot l_n\cdot\bm{1})=
i(a)\cdot l_1\cdot l_2\cdot\cdots \cdot l_n\cdot\bm{1} .\qedhere\]
\end{proof}

The normal bundle to the unit section of $\groupoid{L}/\groupoid{A}$
is naturally isomorphic to the vector bundle $\pi:L/A\to M$.
The identification of a tubular neighborhood of the unit section of $\groupoid{L}/\groupoid{A}$
with a tubular neighborhood of the zero section of $L/A$ via
an exponential map $\exp^{\nabla,j}$ induces, on the distribution level,
an identification $\pbw^{\nabla,j}$ of the coalgebras
$\frac{\enveloping{L}}{\enveloping{L}\sections{A}}$ and $\sections{S(L/A)}$
--- see Theorem~\ref{Madagascar}.

Transferring $\delta^\ddagger$ through the coalgebra isomorphism
$\pbw^{\nabla,j}:\sections{S(L/A)}\to\frac{\enveloping{L}}{\enveloping{L}\sections{A}}$,
we obtain an $A$-action $\delta^\lightning$ by coderivations on $\sections{S(L/A)}$:
\begin{equation}\label{eq:Mathieu}
\delta^\lightning_a(s)=
(\pbw^{\nabla,j})^{-1}\circ\delta^\ddagger_a\circ\pbw^{\nabla,j}(s)=
(\pbw^{\nabla,j})^{-1}\big(i(a)\cdot\pbw^{\nabla,j}(s)\big)
\end{equation}
for all $a\in\sections{A}$ and $s\in\sections{L/A}$.
Obviously, $\delta^\lightning$ depends on the choice of $\nabla$ and $j$,
while $\delta^\ddagger$ does not.
We note that $\delta^\lightning_a(1)=0$.

\begin{proposition}\label{schist}
The Kapranov dg-manifold structure on $A[1]\oplus L/A$ arising
(as in Proposition~\ref{prop:KapranovOfLieAlgAction})
from the $A$-action \eqref{Fedosov_inf_action} on the fiber bundle
$s:\groupoid{L}/\groupoid{A}\to M$
coincides with the one characterized (as in Proposition~\ref{Tasmania})
by the $A$-action by coderivations $\delta^\lightning$ on $\sections{S(L/A)}$.
\end{proposition}

\section{Kapranov dg-manifolds stemming from Lie pairs}
\label{Abidjan}

Together, Proposition~\ref{schist} and Equation~\eqref{eq:Mathieu}
show that the Kapranov dg-manifold structure on $A[1]\oplus L/A$
arising from the $A$-action \eqref{Fedosov_inf_action}
on $\groupoid{L}/\groupoid{A}$
--- i.e.\ the $A$-action by coderivations $\delta^\lightning$ ---
can be characterized purely
algebraically (in terms of the coalgebra isomorphism
$\pbw^{\nabla,j}$ and the multiplication in the universal
enveloping algebra $\enveloping{L}$)
without resorting to the local Lie groupoids
$\groupoid{L}$ and $\groupoid{A}$. While the local Lie
groupoids $\groupoid{L}$ and $\groupoid{A}$ and thus
the exponential map $\exp^{\nabla,j}$ only exist for
\emph{real} Lie pairs, the coalgebra isomorphism $\pbw^{\nabla,j}$
and thus the $A$-action by coderivations $\delta^\lightning$
make sense for \emph{all} Lie pairs, \emph{whether real or complex}.
Therefore, the construction of the Kapranov dg-manifold
structure on $A[1]\oplus L/A$ outlined at the end
of Section~\ref{Patagonia} can be readily generalized to
complex Lie pairs notwithstanding (despite) the loss
of the Lie groupoid picture.
The purpose of the present section is to establish the two results
pertaining to this Kapranov dg-manifold structure on $A[1]\oplus L/A$
announced as Theorems~\ref{Amsterdam} and~\ref{Dublin}
in Section~\ref{Biloxi}.

Consider the short exact sequence of vector bundles \eqref{Sydney} associated with a Lie pair $(L,A)$.
A choice of a splitting \eqref{Murmansk}
determines a bundle map $\scindement:\Lambda^2(L/A)\to A$ by the relation
\begin{equation}\label{eq:kj}
\scindement(b_1,b_2)=p\big(\lie{j(b_1)}{j(b_2)}\big),
\quad\forall b_1,b_2\in\sections{L/A}
.\end{equation}

Furthermore, the splitting~\eqref{Murmansk}
determines a $\KK$-bilinear map
\[ \Delta^{\Bott}:\sections{L/A}\times\sections{A}\to\sections{A} \]
through the relation
\[ \Delta^{\Bott}_b a=p\big(\lie{j(b)}{i(a)}\big),
\quad\forall b\in\sections{L/A}, \forall a\in\sections{A} .\]
The map $\Delta^{\Bott}$ is $R$-linear in its first argument and satisfies
\[ \Delta^{\Bott}_b(f\cdot a)
=\anchor_{j(b)}(f)\cdot a+f\cdot\Delta^{\Bott}_b a \]
for all $b\in\sections{L/A}$, $a\in\sections{A}$, and $f\in R$.
It is a sort of `$L/A$-pseudo-connection on $A$.'
In particular, if the subbundle $j(L/A)$ is a subalgebroid
of $L$ (i.e.\ if $\scindement=0$), then $\Delta^{\Bott}$
is a true connection: the Bott $j(L/A)$-connection on $A$.

\subsection{Torsion-free connections}

Given an $L$-connection $\nabla$ on $L/A$, its \emph{torsion} is
the bundle map $\torsion:\Lambda^2 L\to L/A$ defined by
\begin{equation}\label{Tibet} \torsion(l_1,l_2)
=\nabla_{l_1}q(l_2)-\nabla_{l_2}q(l_1)-q\big(\lie{l_1}{l_2}\big),
\quad\forall l_1,l_2\in\sections{L}
.\end{equation}

\begin{definition}
\begin{enumerate}
\item An $L$-connection $\nabla$ on $L/A$ is said to be torsion-free if $T^\nabla=0$.
\item An $L$-connection $\nabla$ on $L/A$ is said to be an extension
of the Bott representation of $A$ on $L/A$ --- see Equation~\eqref{Istambul} ---
if $\nabla_{i(a)}b=\nabla^{\Bott}_a b$
for all $a\in\sections{A}$ and $b\in\sections{L/A}$.
\end{enumerate}
\end{definition}

\pagebreak[2]
\begin{lemma}\label{Mascate}
\strut
\begin{enumerate}
\item If the $L$-connection $\nabla$ on $L/A$ is torsion-free,
then it is an extension of the Bott representation of $A$ on $L/A$.
\item If the $L$-connection $\nabla$ on $L/A$
is an extension of the Bott representation of $A$ on $L/A$,
then there exists a unique bundle map $\beta^\nabla:\Lambda^2(L/A)\to L/A$ making the diagram
\[ \begin{tikzcd} \Lambda^2 L \arrow[d, two heads, "q"'] \arrow[r, "\torsion"] &
L/A \\ \Lambda^2 (L/A) \arrow[ru, "\beta^\nabla"'] & \end{tikzcd} \]
commute.
\end{enumerate}
\end{lemma}

\begin{proof}
It suffices to notice that, for all $a\in\sections{A}$ and $l\in\sections{L}$,
\[ \torsion(i(a),l)=\nabla_{i(a)}q(l)-q\big(\lie{i(a)}{l}\big)
=\nabla_{i(a)}q(l)-\nabla^{\Bott}_a q(l) .\qedhere\]
\end{proof}

\begin{proposition}\label{Vilnius}
Given any Lie pair $(L,A)$, there exist torsion-free $L$-connections on $L/A$.
\end{proposition}

\begin{proof}[Sketch of proof]
First, construct an $L$-connection $\nabla$ on $L/A$ using the usual partition of unity argument.
Then, tweak $\nabla$ so as to obtain an extension
$\nabla':\sections{L}\times\sections{L/A}\to\sections{L/A}$
of the Bott $A$-connection: choose a splitting \eqref{Murmansk}
of the short exact sequence~\eqref{Sydney} and set
\[ \nabla'_l b=\nabla^{\Bott}_{p(l)}b+\nabla_{j\circ q(l)}b .\]
Finally, obtain a torsion-free connection $\nabla'':\sections{L}\times\sections{L/A}\to\sections{L/A}$
from $\nabla'$ by setting
\[ \nabla''_l b=\nabla'_l b-\frac{1}{2}\beta^{\nabla'}\big(q(l),b\big) .\qedhere\]
\end{proof}

\begin{remark}
The splitting $j$ and the three connections $\nabla$, $\nabla'$, and $\nabla''$
determine the same Poincaré--Birkhoff--Witt isomorphism:
\[ \pbw^{\nabla,j}=\pbw^{\nabla',j}=\pbw^{\nabla'',j} .\]
\end{remark}

\subsection{Atiyah class of a Lie pair}

Let $(L,A)$ be a Lie pair.
Choose a splitting \eqref{Murmansk} of the short exact sequence \eqref{Sydney}
and a \emph{torsion-free} $L$-connection $\nabla$ on $L/A$.

The $L$-connection $\nabla$ on $L/A$ extends by derivations to an $L$-connection on $S(L/A)$,
which we also denote by $\nabla$ by abuse of notation.
Its curvature is the bundle map $\curvature:\Lambda^2 L\to\Der\big(S(L/A)\big)$ defined by
\begin{equation}\label{courbure}
\curvature(l_1,l_2)=\lie{\nabla_{l_1}}{\nabla_{l_2}}
-\nabla_{\lie{l_1}{l_2}}
,\quad\forall l_1,l_2\in\sections{L} 
\end{equation}
--- the sections of the vector bundle $\Der\big(S(L/A)\big)$ are the derivations
of the $R$-algebra $\sections{S(L/A)}$.

Consider the bundle map $\cocycle^{\nabla}:A\otimes S^2(L/A)\to L/A$ defined by
\begin{equation}\label{ZZZ}
\cocycle^{\nabla}(a;b_1\odot b_2)=\tfrac{1}{2}\big\{\curvature\big(i(a),j(b_1)\big)b_2
+\curvature\big(i(a),j(b_2)\big)b_1\big\}
\end{equation}
for all $a\in\sections{A}$ and $b_1,b_2\in\sections{L/A}$.
In particular, we have
\begin{equation}\label{Boussu}
\cocycle^{\nabla}(a;b^2)=\curvature\big(i(a),j(b)\big)b,
\quad \forall a\in\sections{A},b\in\sections{L/A}
.\end{equation}
Since $\nabla$ is an extension of the (flat) Bott $A$-connection on $L/A$,
the map $\cocycle^{\nabla}$ does not actually depend on the chosen splitting $j$.

\pagebreak[2]
\begin{lemma}\label{Norilsk}
\strut
\begin{enumerate}
\item The element $\cocycle^{\nabla}\in\sections{A^\vee\otimes S^2(L/A)^\vee\otimes L/A}$
is a Chevalley--Eilenberg 1-cocycle of $A$ with values in $A$-module
$S^2(L/A)^\vee\otimes L/A$ induced by the Bott connection.
\item Given two torsion-free $L$-connections $\nabla^\sharp$ and $\nabla^\flat$,
the difference $\cocycle^{\nabla^\sharp}-\cocycle^{\nabla^\flat}$
is a Chevalley--Eilenberg 1-coboundary with respect to the representation of $A$
on $S^2(L/A)^\vee\otimes L/A$ induced by the Bott connection.
\end{enumerate}
\end{lemma}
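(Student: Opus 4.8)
The plan is to derive both assertions by direct computation, reducing~(1) to the second Bianchi identity and~(2) to the transformation law of the curvature under a change of connection. Since we work over a field of characteristic zero, it suffices in each case to evaluate the relevant symmetric $2$-tensor on a ``diagonal'' argument $b^2=b\odot b$ with $b\in\sections{L/A}$, the general polarized identity then following automatically. Two elementary facts will be used repeatedly: because $\nabla$ extends the Bott representation, $\nabla_a$ agrees with $\nabla^{\Bott}_a$ on $\sections{L/A}$ for every $a\in\sections{A}$; and because the Bott representation is flat, $\curvature(a_1,a_2)=0$ whenever $a_1,a_2\in\sections{A}$. (That $\cocycle^\nabla$ is well defined, i.e.\ independent of the splitting $j$, has already been noted and likewise rests on Bott-flatness.)

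For~(1) I would fix $a_1,a_2\in\sections{A}$, $b\in\sections{L/A}$ and set $\ell=j(b)\in\sections{L}$. Expanding the Chevalley--Eilenberg differential, and using $\nabla^{\Bott}_a(b^2)=2\,b\odot\nabla^{\Bott}_a b$ together with the polarization of $\cocycle^\nabla$, one obtains $(d_A^{\nabla^{\Bott}}\cocycle^\nabla)(a_1,a_2)(b^2)$ as an explicit sum of terms $\nabla^{\Bott}_{a_1}\big(\curvature(a_2,\ell)b\big)$, $\curvature(a_2,\ell)(\nabla^{\Bott}_{a_1}b)$, $\curvature\big(a_2,j(\nabla^{\Bott}_{a_1}b)\big)b$, the corresponding terms with $a_1$ and $a_2$ exchanged, and $\curvature(\crochet{a_1}{a_2},\ell)b$. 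Independently, I would write the second Bianchi identity for $\nabla$ (a standard consequence of the Jacobi identity of the Lie algebroid $L$) applied to the triple $(a_1,a_2,\ell)$ and evaluated on $b$, namely the vanishing of the cyclic sum over $(a_1,a_2,\ell)$ of $\nabla_{a_1}\big(\curvature(a_2,\ell)b\big)-\curvature(a_2,\ell)(\nabla_{a_1}b)-\curvature(\crochet{a_1}{a_2},\ell)b$. In this sum the summand containing $\curvature(a_1,a_2)$ vanishes by Bott-flatness, while the summands $\curvature(\crochet{a_j}{\ell},a_i)b$ are rewritten by splitting $\crochet{a}{\ell}=j(\nabla^{\Bott}_a b)+\alpha$ with $\alpha\in\sections{A}$ (allowed since $q\crochet{a}{\ell}=\nabla^{\Bott}_a b$) and discarding $\curvature(\alpha,a_i)=0$; this turns them into $\curvature\big(a_i,j(\nabla^{\Bott}_{a_j}b)\big)b$. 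A term-by-term comparison then shows that the cocycle expression \emph{equals} the Bianchi cyclic sum, and hence vanishes.

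For~(2), the difference $\phi:=\nabla\diese-\nabla\bemol$ is $R$-linear in both slots and vanishes on $\sections{A}$ (the two connections agree there), so it descends to a tensor $\phi\in\sections{(L/A)\dual\otimes\End(L/A)}$, which I regard as a bilinear map $\phi(b_1,b_2)=\big(\nabla\diese_{j(b_1)}-\nabla\bemol_{j(b_1)}\big)b_2$. Starting from $\cocycle^{\nabla\diese}(a;b^2)=R^{\nabla\diese}(a,j(b))b$ and the analogous expression for $\nabla\bemol$, expanding both curvatures and cancelling the common $\nabla\bemol$-contributions (using once more $\nabla\diese_a=\nabla\bemol_a=\nabla^{\Bott}_a$ and $q\crochet{a}{\ell}=\nabla^{\Bott}_a b$) yields
\[ \cocycle^{\nabla\diese}(a;b^2)-\cocycle^{\nabla\bemol}(a;b^2)=\nabla^{\Bott}_a\big(\phi(b,b)\big)-\phi\big(b,\nabla^{\Bott}_a b\big)-\phi\big(\nabla^{\Bott}_a b,b\big). \]
Letting $\eta\in\sections{S^2(L/A)\dual\otimes L/A}$ denote the symmetrization of $\phi$, characterized by $\eta(b^2)=\phi(b,b)$, the right-hand side is exactly $(d_A^{\nabla^{\Bott}}\eta)(a;b^2)$, the Chevalley--Eilenberg differential of the $0$-cochain $\eta$. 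Therefore $\cocycle^{\nabla\diese}-\cocycle^{\nabla\bemol}=d_A^{\nabla^{\Bott}}\eta$ is a coboundary, as claimed.

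The computations themselves are routine; the only delicate point, and where the argument genuinely lives, is the bookkeeping in~(1): one must track precisely which arguments of $\curvature$ lie in $\sections{A}$ so that Bott-flatness can discard $\curvature(a_1,a_2)$ and $\curvature(\alpha,a_i)$, and one must verify that the $A$-valued corrections $\crochet{a}{\ell}-j(\nabla^{\Bott}_a b)$ contribute nothing. Once this is organized the coincidence of the cocycle expression with the Bianchi cyclic sum is forced, and~(2) becomes a short, self-contained verification.
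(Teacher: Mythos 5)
Your proof is correct; I checked both computations. In~(1), after Bott-flatness is used to discard $\curvature(a_1,a_2)$ and the $\sections{A}$-components of $\lie{a_i}{j(b)}-j(\nabla^{\Bott}_{a_i}b)$, the seven surviving terms of the second Bianchi cyclic sum for the triple $\big(a_1,a_2,j(b)\big)$ evaluated on $b$ match, term by term and with the right signs, the seven terms of $(d_A^{\nabla^{\Bott}}\cocycle^{\nabla})(a_1,a_2)(b^2)$; and polarization in characteristic zero legitimately reduces everything to diagonal arguments $b^2$. In~(2), the difference of the two extensions is tensorial and vanishes when its first argument lies in $\sections{A}$, so it descends to a tensor on $L/A$ independent of $j$, and your identity exhibits $\cocycle^{\nabla\diese}-\cocycle^{\nabla\bemol}$ as $d_A^{\nabla^{\Bott}}$ of its symmetrization.

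Your route is, however, genuinely different from the paper's, for the simple reason that the paper offers no proof of this lemma at all: the statement is imported from~\cite{CSX} (the remark that follows it is only concerned with reconciling the cocycle representative used here with the one used in~\cite{CSX}), and within the paper's own framework both assertions also arise as by-products of the later machinery. Indeed, Theorem~\ref{Ohio} identifies $\cocycle^{\nabla}$ with the quadratic component $\cR_2$ of the homological vector field $D_{\pbw^{\nabla,j}}$; the remark following Theorem~\ref{Tasmania} observes that $D^2=0$ forces $d_A^{\nabla^{\Bott}}\cR_2=0$, which is assertion~(1); and assertion~(2) can be extracted by combining Proposition~\ref{dolphin} (the Kapranov dg-manifold structures attached to different choices of $(\nabla,j)$ are canonically isomorphic, fixing $(A[1],d_A)$) with the unlabeled proposition preceding Corollary~\ref{Cantonese}, noting that the first-order component of that canonical isomorphism is the identity because all $\pbw$ maps agree on $\sections{L/A}$ modulo $\enveloping{L}\sections{A}$. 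What your argument buys is self-containedness and economy: it is purely tensorial, is available at the point where the lemma is stated, and uses nothing beyond the Bianchi identity. What the paper's (implicit) route buys is conceptual packaging: the cocycle condition on $\cR_2=\cocycle^{\nabla}$ is just the lowest-order piece of a full Maurer--Cartan equation, so the mechanism that yields the lemma simultaneously controls all the higher components $\cR_k$ and their behavior under change of $(\nabla,j)$ --- at the cost of requiring the PBW isomorphism and the Kapranov dg-manifold formalism first.
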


Lemma~\ref{Norilsk} is analogous to \cite[Theorem~2.5]{MR3439229} and its proof is similar.

The cohomology class $[\cocycle^{\nabla}]\in H_{\CE}^1\big(A;S^2(L/A)^\vee\otimes L/A\big)$,
which is independent of the chosen connection $\nabla$,
is called the \emph{Atiyah class} of the Lie pair $(L,A)$
and is denoted $\alpha_{L/A}$.

\begin{remark}
The definition of the Atiyah class given above agrees with the definition of~\cite{MR3439229}
even though the Atiyah cocycle $\cocycle^\nabla$ defined here is slightly different from
(but nevertheless cohomologous to) the Atiyah cocycle defined in~\cite{MR3439229}.
\end{remark}

\subsection{Main theorems}
\label{Paris}

We are now ready to state the results that we announced
as Theorems~\ref{Amsterdam} and~\ref{Dublin} in Section~\ref{Biloxi}.
We defer their proofs to the end of Section~\ref{Olten}.
\begin{theorem}
\label{Ohio}
Let $(L,A)$ be a Lie pair. The choice of (i) a splitting $j:L/A\to L$
of the short exact sequence of vector bundles
\eqref{Sydney} and (ii) a torsion-free $L$-connection $\nabla$ on $L/A$
determines a Kapranov dg-manifold structure on $A[1]\oplus L/A$
whose homological vector field
$\Qd\in\XX(A[1]\oplus L/A)$ is a
degree $+1$ derivation \[ \sections{\Lambda^{\bullet} A^\vee\otimes\hat{S}((L/A)^\vee)}
\xto{\Qd} \sections{\Lambda^{\bullet+1} A^\vee\otimes\hat{S}((L/A)^\vee)} \]
of the algebra of functions on $A[1]\oplus L/A$ of the form
\[ \Qd=d_{A}^{\nabla^{\Bott}}+\sum_{k=2}^{\infty}\cR_k ,\]
where
\begin{enumerate}
\item $d_{A}^{\nabla^{\Bott}}$ is the Chevalley--Eilenberg
differential corresponding to the Bott representation of $A$
on $\hat{S}\big((L/A)^\vee\big)$;
\item $\cR_k$ is a section of the vector bundle
$A^\vee\otimes S^k(L/A)^\vee\otimes L/A$
seen as an operator on
$\sections{\Lambda^{\bullet}A^\vee\otimes\hat{S}((L/A)^\vee)}$
(acting by contraction);
\item $\cR_2$ is the Atiyah 1-cocycle $\cocycle^{\nabla}$
associated with the connection $\nabla$; and
\item $\cR_k$, for each $k\geqslant 3$,
can be computed explicitly as an algebraic function
of $\scindement$, $\cocycle^{\nabla}$, $\curvature$, 
(see Equations~\eqref{eq:kj}, \eqref{ZZZ}, and~\eqref{courbure})
their higher covariant derivatives, and compositions thereof.
\end{enumerate}
Moreover, the various Kapranov dg-manifold structures
on $A[1]\oplus L/A$ resulting from all possible
choices of splitting and connection are all isomorphic.
\end{theorem}

\begin{Aa}
In the case of a pair $(L,A)$ of \emph{real} Lie algebroids,
the Kapranov dg-manifold structure on $A[1]\oplus L/A$
of Theorem~\ref{Ohio} is
the Kapranov dg-manifold arising,
as in Proposition~\ref{schist}, from the left action of $A$
on $s:\groupoid{L}/\groupoid{A}\to M$.
In particular, the characteristic class of the left action
of $A$ on $s:\groupoid{L}/\groupoid{A}\to M$
--- see Remark~\ref{garedeLyon} ---
is the Atiyah class of the Lie pair $(L,A)$.
\end{Aa}

In general, the algebraic expression for $\cR_k$, with $k\geqslant 3$,
in Theorem~\ref{Ohio} can be very complicated.
However, in the special case of matched pairs satisfying
an additional assumption, we have a very clean description
of $\cR_k$ as indicated in the following proposition,
which will be needed in Section~\ref{Amiens}.

\begin{proposition}
\label{zebra}
Let $L=A\bowtie B$ be a matched pair of Lie algebroids
--- see Example~\ref{ex:matched}.
Assume that there exists a torsion-free flat $B$-connection
$\ddot{\nabla}$ on $B$ and let $\nabla$ be the induced torsion-free
$L$-connection on $B$ defined by Equation~\eqref{eq:matched}.
Then, for the Lie pair $(L,A)$ and the natural splitting of
the short exact sequence \eqref{Sydney} identifying $L/A$ to $B$,
the sections $\cR_{k}$ of the vector bundles
$A^\vee\otimes B\otimes S^k(B^\vee)$ satisfy the simple
recursive relation
\[ \cR_{k+1}=(\id_{A^\vee\otimes B}\otimes\sym)
\big(d_B^{\widehat{\nabla}}\cR_k\big) ,\]
for all $k\geqslant 2$.
Here $\widehat{\nabla}$ denotes the $B$-connection on
$A^\vee\otimes B\otimes S^k(B^\vee)$ induced by the
$B$-connections $\Delta^{\Bott}$ on $A$ and $\ddot{\nabla}$ on $B$,
while \[ \sym:B^\vee\otimes S^k(B^\vee)\to S^{k+1}(B^\vee) \]
is the symmetrization map
sending $b\otimes(b_1\odot b_2\odot\cdots\odot b_k)$
to $b\odot b_1\odot b_2\odot\cdots\odot b_k$.
\end{proposition}

The Bott representation of $A$ on $L/A$ induces an $A$-action (by coderivations)
$\delta^\dagger$ on the $R$-coalgebra $\sections{S(L/A)}$,
which respects the grading --- see Proposition~\ref{Bott_vf}.

Multiplication by elements of $\sections{A}$ and $C^\infty(M)$
from the left in $\enveloping{L}$ induces an $A$-action 
(by coderivations) $\delta^\ddagger$ on the $R$-coalgebra
$\frac{\enveloping{L}}{\enveloping{L}\sections{A}}$
--- see Proposition~\ref{Fedosov_vf}.

\begin{lemma}\label{Nebraska}
The $A$-action on the $R$-coalgebra
$\frac{\enveloping{L}}{\enveloping{L}\sections{A}}$
induced by the multiplication by elements of $\sections{A}$ and $C^\infty(M)$
from the left in $\enveloping{L}$
preserves the filtration~\eqref{Kolkata}.
\end{lemma}

\begin{proof}
For all $a\in\sections{A}$ and $l_1,l_2,\cdots,l_n\in\sections{L}$, we have
\[ a l_1 \cdots l_n = \sum_{i=1}^{n} \big(l_1 \cdots l_{i-1} \lie{a}{l_i} l_{i+1}
\cdots l_n \big) + l_1 \cdots l_n a ,\]
where every term of the right hand side belongs to $\mathcal{U}^{\leqslant n}(L)$
except for the last which belongs to $\enveloping{L}\sections{A}$.
\end{proof}

\begin{theorem}
\label{Montana}
Let $(L,A)$ be a Lie pair. The following assertions are equivalent.
\begin{enumerate}
\newcounter{compteur}
\item\label{een} The Kapranov dg-manifold structure on
$A[1]\oplus L/A$ of Theorem~\ref{Ohio} is linearizable.
\item\label{twee} The Atiyah class
$\alpha_{L/A}\in H_{\CE}^1(A;S^2(L/A)^\vee\otimes L/A)$
of the Lie pair $(L,A)$ vanishes.
\item\label{drie}
For every splitting $j:L/A\to L$, there exists a torsion-free
$L$-connection $\nabla$ on $L/A$ such that the associated
Poincaré--Birkhoff--Witt map $\pbw^{\nabla,j}$
intertwines the $A$-action $\delta^\dagger$ on $\sections{S(L/A)}$
with the $A$-action $\delta^\ddagger$ on
$\frac{\enveloping{L}}{\enveloping{L}\sections{A}}$.
In other words, we have $\delta^\lightning=\delta^\dagger$.
\setcounter{compteur}{\value{enumi}}
\end{enumerate}
\end{theorem}

\begin{Ab}
In the case of real Lie algebroids, the following two assertions
may be added to the list above:
\begin{enumerate}
\setcounter{enumi}{\value{compteur}}
\item\label{vier} For every splitting $j:L/A\to L$,
there exists a torsion-free $L$-connection $\nabla$ on $L/A$
such that the exponential map $\exp^{\nabla,j}: L/A\to \groupoid{L}/\groupoid{A}$
(or more precisely its fiberwise $\infty$-jet along $M$)
intertwines the $A$-actions by coderivations
$\delta^\dagger$ and $\delta^\ddagger$ described
in Propositions~\ref{Bott_vf} and~\ref{Fedosov_vf}.
\item\label{vijf}
There exists a fiber-preserving diffeomorphism $\varphi$
from a tubular neighborhood of the zero section in $L/A$
to a tubular neighborhood of the unit section
in $\groupoid{L}/\groupoid{A}$, which fixes the submanifold $M$
(i.e. $\varphi(0_m)=1_m$ for all $m\in M$),
and whose fiberwise $\infty$-jet along $M$ intertwines
the $A$-actions by coderivations $\delta^\dagger$ and
$\delta^\ddagger$ described in Propositions~\ref{Bott_vf}
and~\ref{Fedosov_vf}.
\end{enumerate}
\end{Ab}

\begin{remark}
Addendum to Theorem~\ref{Montana} was sharpened
by Laurent-Gengoux and Voglaire \cite{MR4059952}
after an early version of the present paper was posted on arXiv.org.
Nevertheless, since complex Lie algebroids do not arise
from Lie groupoids \cite{MR2285039},
Theorem~\ref{Montana} and its addendum cannot be derived
from the result pertaining to groupoids obtained in \cite{MR4059952}.
\end{remark}

\subsection{Proofs of Theorem~\ref{Ohio}, Proposition~\ref{zebra}, and Theorem~\ref{Montana}}
\label{Olten}

Every choice of an $L$-connection $\nabla$ on $L/A$
and a splitting $j:L/A\to L$ (see~\eqref{Murmansk}) of the short exact sequence of vector bundles~\eqref{Sydney}
determines a Poincaré--Birkhoff--Witt map
$\pbw^{\nabla,j}:\sections{S(L/A)}\to\tfrac{\enveloping{L}}{\enveloping{L}\sections{A}}$,
which is an isomorphism of filtered $R$-coalgebras according to Theorem~\ref{Nairobi}.
In this section, we will often write $\pbw$ instead of $\pbw^{\nabla,j}$
so as to avoid making some equations overly cumbersome.
Being a quotient of the universal enveloping algebra $\enveloping{L}$ by a left ideal,
the $R$-coalgebra $\tfrac{\enveloping{L}}{\enveloping{L}\sections{A}}$
is naturally a left $\enveloping{L}$-module. Hence
$\tfrac{\enveloping{L}}{\enveloping{L}\sections{A}}$
is endowed with a canonical $L$-action (by coderivations).
Pulling back this $L$-action through $\pbw$, we obtain an $L$-action
$\nabla^\lightning$ on $\sections{S(L/A)}$:
\begin{equation}\label{Vermont}
\nabla^\lightning_l(s)=\pbw^{-1}\big(l\cdot\pbw(s)\big)
,\end{equation}
for all $l\in\sections{L}$ and $s\in\sections{S(L/A)}$.

\begin{lemma}\label{Daegu}
For all $a\in\sections{A}$ and $b\in\sections{L/A}$, we have
$\nabla^\lightning_{i(a)}b = \nabla^{\Bott}_a b$.
\end{lemma}

\begin{proof}
We have \[ i(a)\cdot j(b)-j(\nabla^{\Bott}_a b) \in\enveloping{L}\sections{A} \]
since \[ i(a)\cdot j(b)=\lie{i(a)}{j(b)}+j(b)\cdot i(a)
= j(\nabla^{\Bott}_a b)+i\circ p(\lie{i(a)}{j(b)})+j(b)\cdot i(a) \] in $\enveloping{L}$.
It follows from Equation~\eqref{Pittsburgh} that
\[ i(a)\cdot\pbw^{\nabla,j}(b) = \pbw^{\nabla,j}(\nabla^{\Bott}_a b) .\]
The desired result then follows immediately from Equation~\eqref{Vermont}.
\end{proof}

The $L$-connection $\nabla$ on $L/A$ extends by derivations
to an $L$-connection on $S(L/A)$, which we also denote by $\nabla$ by abuse of notation
--- see Equation~\eqref{Edmonton}.
Since the difference $\nabla^\lightning-\nabla$ of the two $L$-connections on $S(L/A)$
is an $R$-bilinear map $\sections{L}\times\sections{S(L/A)}\to\sections{S(L/A)}$,
we may define a bundle map \[ \Theta: L\otimes S(L/A)\to S(L/A) \] by the relation
\begin{equation}\label{California}
\Theta(l;s) = \nabla^\lightning_l s-\nabla_l s-q(l)\odot s,
\quad\forall l\in\sections{L},s\in\sections{S(L/A)}
.\end{equation}

The proofs of Theorems~\ref{Ohio} and~\ref{Montana} rely heavily on the
following

\begin{proposition}\label{prop:recursion}
All expressions of the form $\Theta(l;b^n)$
with $l\in\sections{L}$, $b\in\sections{L/A}$, and $n\in\NN$
can be computed recursively from the splitting $j$, the connection $\nabla$,
its torsion $\torsion$, and its curvature $\curvature$,
through an explicit algebraic algorithm
without resorting to the connection $\nabla^{\lightning}$
or the map $pbw^{\nabla,j}$ and its intractable inverse.
\end{proposition}

In preparation for the proof of Proposition~\ref{prop:recursion},
we proceed to establish several preliminary results.

The next lemma gives the `initial value' of $\Theta$ for the recursion
in Proposition~\ref{prop:recursion}.

\begin{lemma}\label{Oregon}
For all $l\in\sections{L}$, we have $\Theta(l;1)=0$.
\end{lemma}

\begin{proof}
Since $\pbw(1)=\bm{1}$; $\pbw(b)=j(b)\cdot\bm{1}$
for all $b\in\sections{L/A}$; and $l-j\circ q(l)\in\sections{A}$
for all $l\in\sections{L}$,
it follows from Equations~\eqref{California} and~\eqref{Vermont} that
\[ \Theta(l;1)=\nabla^{\lightning}_l 1-\nabla_l 1-q(l)\odot 1
=\pbw^{-1}(l\cdot\bm{1})-0-q(l)
=\pbw^{-1}(j\circ q(l)\cdot\bm{1})-q(l)=0
.\qedhere\]
\end{proof}

\begin{lemma}\label{Grez-Doiceau}
For all $l\in\sections{L}$, the map $s\mapsto\Theta(l;s)$
stabilizes the subspace $\sections{S^{\geqslant 1}(L/A)}$ of $\sections{S(L/A)}$.
\end{lemma}

\begin{proof}
The universal enveloping algebra $\enveloping{L}$ decomposes as the direct sum
of its subalgebra $R$ and its left ideal $\enveloping{L}\sections{L}$.
Quotienting by the left ideal generated by $\sections{A}$,
we obtain the direct sum decomposition
\[ \frac{\enveloping{L}}{\enveloping{L}\sections{A}}\cong
R\oplus\frac{\enveloping{L}\sections{L}}{\enveloping{L}\sections{A}} .\]
It is clear from the relations \eqref{Ottawa}, \eqref{Pittsburgh}, and~\eqref{Rome}
that the isomorphism $\pbw^{\nabla,j}$ identifies $\sections{S^{\geqslant 1}(L/A)}$
to $\frac{\enveloping{L}\sections{L}}{\enveloping{L}\sections{A}}$.
Hence, it follows immediately from Equation~\eqref{Vermont}
that $\nabla^{\lightning}_l$ stabilizes $\sections{S^{\geqslant 1}(L/A)}$.
The desired result then follows from Equation~\eqref{California}.
\end{proof}

\begin{lemma}\label{Djibouti}
For all $a\in\sections{A}$ and $b\in\sections{L/A}$, we have
$\Theta\big(i(a);b\big)=\nabla^{\Bott}_a b-\nabla_{i(a)}b$.
\newline
In particular, if the $L$-connection $\nabla$ on $L/A$ is torsion-free,
then $\Theta\big(i(a);b\big)=0$.
\end{lemma}

\begin{proof}
The first assertion follows immediately from Equation~\eqref{California} and Lemma~\ref{Daegu}.
The second assertion then follows from Lemma~\ref{Mascate}.
\end{proof}

\begin{lemma}\label{Malawi}
For every $l\in\sections{L}$, the map $s\mapsto\Theta(l;s)$ is a coderivation of
the $R$-coalgebra $\sections{S(L/A)}$ which preserves the filtration
\begin{equation}\label{Texas}
R\into S^{\leqslant 1}(L/A) \into \cdots \into S^{\leqslant n-1}(L/A)
\into S^{\leqslant n}(L/A) \into S^{\leqslant n+1}(L/A) \into \cdots
.\end{equation}
\end{lemma}

\begin{proof}
The verification that, for every $l\in\sections{L}$,
the three maps $\nabla^\lightning_l$, $\nabla_l$, and $s\mapsto q(l)\odot s$
are coderivations of the $\KK$-coalgebra $\sections{S(L/A)}$ is straightforward.
Being $R$-linear, the map $s\mapsto\Theta(l;s)$ is
a coderivation of the $R$-coalgebra $\sections{S(L/A)}$
for every $l\in\sections{L}$.
According to Proposition~\ref{shark}, it then follows from Lemma~\ref{Oregon}
that the coderivation $s\mapsto\Theta(l;s)$ preserves the filtration~\eqref{Texas}.
\end{proof}

The next lemma translates Equation~\eqref{Rome},
the recursive relation defining the map $\pbw$, in terms of $\Theta$.

\begin{lemma}\label{Mozambique}
For all $n\in\NN$ and all $b_0,b_1,\dots,b_n\in\sections{L/A}$, we have
\[ \sum_{k=0}^n \Theta\big(j(b_k);b_0\odot\cdots\odot
\widehat{b_k}\odot\cdots\odot b_n\big) =0 .\]
\end{lemma}

\begin{proof}
Set $b^{\{k\}}=b_0\odot\cdots\odot
\widehat{b_k}\odot\cdots\odot b_n$ and rewrite Equation~\eqref{Rome} as
\[ (n+1) \pbw(b_0\odot\cdots\odot b_n)
= \sum_{k=0}^n \Big\{j(b_k)\cdot\pbw(b^{\{k\}})
-\pbw\big(\nabla_{j(b_k)}(b^{\{k\}})
\big)\Big\} .\]
Applying $\pbw^{-1}$ to both sides, we obtain
\[ (n+1)\ b_0\odot\cdots\odot b_n = \sum_{k=0}^n
\Big\{ \nabla^\lightning_{j(b_k)}(b^{\{k\}}) - \nabla_{j(b_k)}(b^{\{k\}}) \Big\} ,\]
which is equivalent to
\[ \sum_{k=0}^n \Big\{ \nabla^\lightning_{j(b_k)}(b^{\{k\}})
- \nabla_{j(b_k)}(b^{\{k\}}) - q\big(j(b_k)\big)\odot b^{\{k\}} \Big\} =0 .\]
The result follows from Equation~\eqref{California}.
\end{proof}

The splitting $j:L/A\to L$ (see~\eqref{Murmansk})
of the short exact sequence~\eqref{Sydney}
and the $L$-connection $\nabla$ on $L/A$ determine a $\KK$-bilinear map
\[ \Delta:\sections{L/A}\times\sections{L}\to\sections{L} \]
through the relation
\begin{equation}\label{Dour}
\Delta_b l=j(\nabla_l b)-\lie{l}{j(b)},\quad\forall b\in\sections{L/A},l\in\sections{L}
.\end{equation}
The map $\Delta$ is $R$-linear in its first argument and satisfies
\[ \Delta_b (f\cdot l)=\anchor_{j(b)}(f) \cdot l +f\cdot\Delta_b l \]
for all $b\in\sections{L/A}$, $l\in\sections{L}$, and $f\in R$.
It is a sort of `pseudo-$L/A$-connection on $L$.'

\begin{lemma}\label{pig}
For all $l\in\sections{L}$ and $b\in\sections{L/A}$, and $n\in\NN$, we have
\begin{multline*}
\Theta\left(\big(\id+\tfrac{1}{n+1}j\circ q\big)(l);b^{n+1}\right) \\
= b\odot\Theta\big(l;b^n\big)
+\nabla_{j(b)}\big(\Theta(l;b^n)\big)
-\Theta(\Delta_b l;b^n)
-\Theta\big(l;\nabla_{j(b)}(b^n)\big) \\
+\Theta\big(j(b);\Theta(l;b^n)\big)
-T^\nabla\big(l,j(b)\big)\odot b^n
-R^\nabla\big(l,j(b)\big)(b^n)
,\end{multline*}
where the operator $\id+\tfrac{1}{n+1}j\circ q$ is invertible
with inverse $\id-\tfrac{1}{n+2}j\circ q$.
\end{lemma}

The proof of Lemma~\ref{pig} relies heavily on Lemma~\ref{Mozambique}.

\begin{proof}
Taking $b_0=b_1=\dots=b_n=b$ in Lemma~\ref{Mozambique}, we obtain
\begin{equation}\label{Togo}
\Theta\big(j(b);b^n\big)=0
.\end{equation}
	
Taking $b_0=\nabla_l b$ and $b_1=\dots=b_n=b$ in Lemma~\ref{Mozambique}, we obtain
\begin{equation}\label{Angola}
-\Theta\big(j(b);\nabla_l(b^n)\big)=\Theta\big(j(\nabla_l b);b^n\big)
.\end{equation}
	
Taking $n=m+1$, $b_0=c$, and $b_1=\dots=b_n=b$ in Lemma~\ref{Mozambique}, we obtain
\begin{equation}\label{Brazil}
-\Theta\big(j(b);c\odot b^{m}\big)=\tfrac{1}{m+1}\Theta\big(j(c);b^{m+1}\big)
.\end{equation}
	
Since $\nabla^\lightning$ is flat and
$\nabla^\lightning_l s=q(l)\odot s+\nabla_l s+\Theta(l;s)$,
we have, for any $l,z\in\sections{L}$,
\[ \begin{split}
0 = & \ \nabla^\lightning_l\nabla^\lightning_z s
-\nabla^\lightning_z\nabla^\lightning_l s
-\nabla^\lightning_{\lie{l}{z}}s \\
= & \ T^\nabla(l,z)\odot s +R^\nabla(l,z) s \\
& +q(l)\odot\Theta(z;s)-q(z)\odot\Theta(l;s)
+\nabla_l\big(\Theta(z;s)\big)-\nabla_z\big(\Theta(l;s)\big) \\
& +\Theta\big(l;q(z)\odot s\big)-\Theta\big(z;q(l)\odot s\big)
+\Theta(l;\nabla_z s)-\Theta(z;\nabla_l s) \\
& +\Theta\big(l;\Theta(z;s)\big)-\Theta\big(z;\Theta(l;s)\big) -\Theta(\lie{l}{z};s)
.\end{split} \]
Substituting $j(b)$ for $z$ and $b^n$ for $s$
and making use of Equations~\eqref{Togo}, \eqref{Angola}, and~\eqref{Brazil},
we obtain the desired equation.
\end{proof}

\begin{proof}[Proof of Proposition~\ref{prop:recursion}]
Lemma~\ref{pig} together with Lemma~\ref{Malawi} provide the recursion relation
while Lemma~\ref{Oregon} gives the initial value.
\end{proof}

\begin{lemma}\label{Louisiana}
Provided the $L$-connection $\nabla$ on $L/A$ is torsion-free,
the map $\Delta$ defined by Equation~\eqref{Dour} satisfies the following properties:
\begin{enumerate}	
\item For all $b\in\sections{L/A}$ and $a\in\sections{A}$,
we have $\Delta_b\big(i(a)\big)=i\big(\Delta^{\Bott}_b a\big)$.
\item For all $b,c\in\sections{L/A}$, we have
$\Delta_b\big(j(c)\big)=\scindement(b,c)+j\big(\nabla_{j(b)}c\big)$.
\end{enumerate}
\end{lemma}

\begin{proof}
\textbf{(1)} For all $b\in\sections{L/A}$ and $a\in\sections{A}$, we have
\[ \begin{aligned}
\Delta_b\big(i(a)\big)=&\ j(\nabla_{i(a)} b)-\lie{i(a)}{j(b)} \\
=&\ j(\nabla_{i(a)} b)-\left\{j\circ q\big(\lie{i(a)}{j(b)}\big)
+i\circ p\big(\lie{i(a)}{j(b)}\big)\right\} \\
=&\ j(\nabla_{i(a)} b - \nabla^{\Bott}_{a} b)-i\circ p(\lie{i(a)}{j(b)}) \\
=&\ i\circ p(\lie{j(b)}{i(a)}) \\
=&\ i(\Delta^{\Bott}_b a)
.\end{aligned} \]

\textbf{(2)} Substituting $j(b)$ and $j(c)$ for $l_1$ and $l_2$
in Equation~\eqref{Tibet}, we obtain
\[ 0=\nabla_{j(b)}c-\nabla_{j(c)}b-q\big(\lie{j(b)}{j(c)}\big) \]
and, applying $j$ to both sides,
\[ 0 = j\big(\nabla_{j(b)}c\big)-j\big(\nabla_{j(c)}b\big)
-\lie{j(b)}{j(c)}+\scindement(b,c) .\]
The result follows from Equation~\eqref{Dour}, the definition of $\Delta$.
\end{proof}

\begin{remark}
If the connection $\nabla$ is torsion-free and $j(L/A)$ is a Lie subalgebroid of $L$,
then $\Delta_b\big(j(c)\big)=j(\nabla_{j(b)}c)$ for all $b,c\in\sections{L/A}$.
\end{remark}

\begin{lemma}\label{Gouvy}
Provided the $L$-connection $\nabla$ on $L/A$ is torsion-free,
the following assertions hold.
\begin{enumerate}
\item
For all $a\in\sections{A}$ and $b\in\sections{L/A}$,
we have $\Theta\big(i(a);b^{2}\big)=-\cocycle^{\nabla}(a;b^2)$.
\item
The Atiyah cocycle $\cocycle^{\nabla}$ vanishes if and only if
$\Theta\big(i(a);s\big)=0$ for all $a\in\sections{A}$
and $s\in\sections{S(L/A)}$.
\item
If $j(L/A)$ is a Lie subalgebroid of $L$ (i.e.\ $\scindement=0$)
and $\curvature\big(j(b_1),j(b_2)\big)=0$
for all $b_1,b_2\in\sections{L/A}$, then
\begin{equation}\label{worm}
\begin{aligned}
\Theta\big(i(a);b^{n+1}\big) =&\ b\odot\Theta\big(i(a);b^n\big)
-n \cocycle^{\nabla}(a;b^2)\odot b^{n-1} \\
& + \nabla_{j(b)}\big(\Theta(i(a);b^n)\big)
-\Theta\big(i(\Delta^{\Bott}_b a);b^n\big) -\Theta\big(i(a);\nabla_{j(b)}(b^n)\big)
\end{aligned}\end{equation}
for all $a\in\sections{A}$, $b\in\sections{L/A}$, and $n\in\NN$.
\end{enumerate}
\end{lemma}

\begin{proof}
\textbf{(1)} Since $\nabla$ is torsion-free, Lemma~\ref{pig};
Lemma~\ref{Louisiana}; and Equation~\eqref{Boussu} imply that
\begin{equation}\label{monkey}
\begin{aligned}
\Theta\big(i(a);b^{n+1}\big) =&\ b\odot\Theta\big(i(a);b^n\big) \\
& +\nabla_{j(b)}\big(\Theta(i(a);b^n)\big)-\Theta(i(\Delta^{\Bott}_b a);b^n)
-\Theta\big(i(a);\nabla_{j(b)}(b^n)\big) \\
& +\Theta\big(j(b);\Theta(i(a);b^n)\big)-n \cocycle^{\nabla}(a;b^2)\odot b^{n-1}
,\end{aligned}\end{equation}
for all $a\in\sections{A}$, $b\in\sections{L/A}$, and $n\in\NN$ with $n\geqslant 1$.
It follows from Lemma~\ref{Malawi} that, for all $a\in\sections{A}$,
the map $s\mapsto\Theta\big(i(a);s\big)$ is an $R$-linear coderivation of $\sections{S(L/A)}$
which preserves the filtration~\eqref{Texas}.
Furthermore, according to Lemma~\ref{Oregon} and Lemma~\ref{Djibouti},
we have $\Theta\big(i(a);1\big)=0$
and $\Theta\big(i(a);c\big)=0$ for all $a\in\sections{A}$ and $c\in\sections{L/A}$
since $\nabla$ is assumed to be torsion-free.
Therefore, setting $n=1$ in Equation~\eqref{monkey}, we obtain
$\Theta\big(i(a);b^{2}\big) = -\cocycle^{\nabla}(a;b^2)$.

\textbf{(2)}
This can be proved by a simple induction based on Equation~\eqref{monkey}.

\textbf{(3)} Finally, to prove the last assertion, we consider the bundle map
\[ H:L/A\otimes S(L/A)\to S(L/A)\] defined by
$H(c;s)=\Theta\big(j(c);s\big)$ for all $c\in\sections{L/A}$ and $s\in\sections{S(L/A)}$.
According to Lemma~\ref{Oregon}, we have $H(c;1)=0$ for all $c\in\sections{L/A}$.
It follows from Lemma~\ref{pig}, the vanishing assumptions on $T^\nabla$; $\scindement$; and $\curvature$,
and Lemma~\ref{Louisiana} that
\begin{equation}\label{warts}\begin{aligned}
\tfrac{n+2}{n+1}H(c;b^{n+1}) =&\ b\odot H(c;b^n)+H\big(b;H(c;b^n)\big) \\
& +\nabla_{j(b)}\big(H(c;b^n)\big)-H(\nabla_{j(b)} c;b^n)-H\big(c;\nabla_{j(b)}(b^n)\big)
.\end{aligned}\end{equation}
Since the map $s\mapsto H(c;s)$ is, according to Lemma~\ref{Malawi},
a coderivation of the $R$-coalgebra $\sections{S(L/A)}$ preserving the filtration~\eqref{Texas},
a simple induction on $n$ based on Equation~\eqref{warts} shows that
$H(c;b^n)=0$ for all $b,c\in\sections{L/A}$ and $n\in\NN$.
Therefore $H=0$. The result then follows from Equation~\eqref{monkey}.
\end{proof}

\begin{lemma}\label{horse}
Given a splitting $j:L/A\to L$ of the short exact sequence \eqref{Sydney}
and a torsion-free $L$-connection $\nabla$ on $L/A$,
the following assertions are equivalent:
\begin{enumerate}
\item\label{eins} $\cocycle^\nabla=0$;
\item\label{zwei} $\Theta\big(i(a);s\big)=0$
for all $a\in\sections{A}$ and $s\in\sections{S(L/A)}$;
\item\label{drei} $\pbw^{\nabla,j}(\nabla^{\Bott}_a s)=i(a)\cdot\pbw^{\nabla,j}(s)$
for all $a\in\sections{A}$ and $s\in\sections{S(L/A)}$;
\item\label{funf} $\pbw^{\nabla,j}:\sections{S(L/A)}
\to\frac{\enveloping{L}}{\enveloping{L}\sections{A}}$
is a morphism of left $A$-modules.
\end{enumerate}
\end{lemma}

\begin{proof}
The equivalence of (\ref{eins}) and (\ref{zwei}) follows immediately from Lemma~\ref{Gouvy}.
It follows immediately from the definitions of $\nabla^{\lightning}$ and $\Theta$,
the assumption that $T^\nabla=0$, and Lemma~\ref{Mascate} that
\begin{multline*}
i(a)\cdot\pbw(s)-\pbw\big(\nabla^{\Bott}_a s\big)
=\pbw\big(\nabla^{\lightning}_{i(a)}s-\nabla^{\Bott}_a s\big) \\
=\pbw\big(\nabla_{i(a)}s+\Theta\big(i(a);s\big)-\nabla^{\Bott}_a s\big)
=\pbw\big(\Theta(i(a);s)\big)
.\end{multline*}
Therefore, (\ref{zwei}) is equivalent to (\ref{drei}).
The equivalence of (\ref{drei}) and (\ref{funf}) is a tautology.
\end{proof}

We return now to the central issue of Theorem~\ref{Ohio}:
how to make the graded manifold $A[1]\oplus L/A$ a Kapranov dg-manifold.

\pagebreak[2]
\begin{proposition}\label{dolphin}
\strut
\begin{enumerate}
\item
Every isomorphism of filtered $R$-coalgebras
\[ \psi:\sections{S(L/A)}\to\tfrac{\enveloping{L}}{\enveloping{L}\sections{A}} \]
gives rise to a homological vector field $D_\psi$ on $A[1]\oplus L/A$
making it a Kapranov dg-manifold
with the Chevalley--Eilenberg differential $d_A$
as homological vector field on the dg-submanifold $A[1]$.
\item
Given any two such isomorphisms of filtered $R$-coalgebras $\phi$ and $\psi$,
there exists an isomorphism of Kapranov dg-manifolds
between $(A[1]\oplus L/A,D_{\phi})$ and $(A[1]\oplus L/A,D_{\psi})$
fixing the dg-submanifold $(A[1],d_A)$.
\end{enumerate}
\end{proposition}

\begin{proof}
\textbf{(1)}
For each $a\in\sections{A}$, we define a coderivation $\delta_a$ of $\sections{S(L/A)}$ by
\[ \delta_a(s)=\psi^{-1}(i(a)\cdot\psi(s)),
\quad\forall a\in\sections{A},s\in\sections{S(L/A)} .\]
According to Lemma~\ref{Nebraska}, the $A$-action
\[ \delta:\sections{A}\times\sections{S(L/A)}\to\sections{S(L/A)} \]
preserves the filtration \eqref{Texas} since $\psi$ respects the filtrations.
Hence, we have $\delta_a(1)=0$ for all $a\in\sections{A}$ by Proposition~\ref{shark}.
The conclusion follows from Proposition~\ref{Tasmania}.
\newline
\textbf{(2)}
For every $a\in\sections{A}$, the automorphism $\phi^{-1}\circ\psi$
of the filtered $R$-coalgebra $\sections{S(L/A)}$
intertwines the coderivations $\phi^{-1}\big(i(a)\cdot\phi(\argument)\big)$
and $\psi^{-1}\big(i(a)\cdot\psi(\argument)\big)$.
Thus, according to Proposition~\ref{Tagalog}, the automorphism $\phi^{-1}\circ\psi$
determines in a natural way an automorphism of the graded manifold $A[1]\oplus L/A$
that fixes the submanifold $A[1]$ and maps the homological vector field $D_\phi$ to $D_\psi$.
\end{proof}

Forearmed with Proposition~\ref{dolphin}, we can now prove Theorem~\ref{Ohio}.
\begin{proof}[Proof of Theorem~\ref{Ohio}]
Taking $\psi=\pbw^{\nabla,j}$ in Proposition~\ref{dolphin},
we obtain the Kapranov dg-manifold $(A[1]\oplus L/A, D)$
associated with the $A$-action by coderivations $\delta^\lightning$
on $\sections{S(L/A)}$ defined by the relation
\begin{equation}\label{Gibraltar}
\delta^\lightning_a(s)=\pbw^{-1}\big(i(a)\cdot\pbw(s)\big)
.\end{equation}
The $A$-action $\delta^\lightning$ on $\sections{S(L/A)}$ determines
an $A$-representation $\nabla^\natural$ on $L/A$ through
the commutative diagram
\[ \begin{tikzcd}
\sections{S(L/A)} \arrow[r, "\delta^\lightning_a"] &
\sections{S(L/A)} \arrow[d, two heads, "\varpi"] \\
\sections{L/A} \arrow[u, hook] \arrow[r, "\nabla^\natural_a"']
& \sections{L/A}
\end{tikzcd} ,\]
where $\varpi$ denotes the canonical projection of $S(L/A)$
onto $S^1(L/A)\cong L/A$.
According to Proposition~\ref{Tasmania}
and Equation~\eqref{Scandinavia} in particular,
we have $D=d_A^{\nabla^\natural}+\cR$, where $d_A^{\nabla^\natural}$
denotes the covariant differential of the $A$-representation
on $\hat{S}\big((L/A)^\vee\big)$ induced by the $A$-representation
$\nabla^\natural$ on $L/A$
and $\cR=\sum_{k\geqslant 2} \cR_k$
is a solution of the Maurer--Cartan Equation~\eqref{eq:MC}.

It follows from Lemma~\ref{Daegu} that,
for all $a\in\sections{A}$ and $b\in\sections{L/A}$,
\[ \nabla^\natural_a b = \varpi(\delta^\lightning_a b)
= \varpi(\nabla^{\lightning}_{i(a)} b)
= \varpi\big(\nabla^{\Bott}_a b\big)=\nabla^{\Bott}_a b .\]
It follows from Equations~\eqref{Scandinavia}, \eqref{Mali},
\eqref{Poland}, \eqref{Gibraltar}, and~\eqref{California} that
\[ \begin{split} \duality{i_a\cR(\varepsilon)}{s}
=& \duality{i_a(D-d^{\nabla^\natural}_A)(\varepsilon)}{s} \\
=& \duality{i_a(D-d^{\nabla^{\Bott}}_A)(\varepsilon)}{s} \\
=& \duality{\varepsilon}{\nabla^{\Bott}_a s-\delta^\lightning_a s} \\
=& \duality{\varepsilon}{\nabla^{\Bott}_a s-\nabla^\lightning_{i(a)} s} \\
=& \duality{\varepsilon}{\nabla^{\Bott}_a s-\nabla_{i(a)}s-\Theta(i(a);s)}
\end{split} \]
and thus
\begin{equation}\label{Quaregnon}
\cR(a;s)=\varpi\big(\nabla^{\Bott}_a s-\nabla^\lightning_{i(a)} s\big)
=\varpi\big(\nabla^{\Bott}_a s-\nabla_{i(a)} s-\Theta(i(a);s)\big)
,\end{equation}
for all $a\in\sections{A}$, $\varepsilon\in\sections{(L/A)^\vee}$, and $s\in\sections{S(L/A)}$.
Therefore, we have
\[ D = d_A^{\nabla^{\Bott}} + \sum_{k\geqslant 2} \cR_k \]
with
\begin{equation}\label{whale}
\cR_k(a;b^k)=-\varpi\big(\Theta(i(a);b^k)\big),\quad\forall k\geqslant 2
,\end{equation}
for all $a\in\sections{A}$ and $b\in\sections{L/A}$.
Provided the $L$-connection $\nabla$ on $L/A$ is torsion-free, we also have
\[ \cR_2(a;b^2)=-\varpi\big(\Theta(i(a);b^2)\big)=\cocycle^{\nabla}(a;b^2) \]
by Lemma~\ref{Gouvy}.
It follows from Proposition~\ref{prop:recursion} that, 
for any $k\geqslant 3$, $\cR_k$ can be computed explicitly 
as an algebraic function
of $\scindement$, $\cocycle^{\nabla}$, $\curvature$, their higher covariant
derivatives, and compositions thereof.

Finally, let $D'$ denote the homological vector field
on $A[1]\oplus L/A$ arising from another choice of connection
$\nabla'$ and splitting $j'$.
Then, Proposition~\ref{dolphin} warrants the existence
of an isomorphism of Kapranov dg-manifolds
between $(A[1]\oplus L/A,D)$ and $(A[1]\oplus L/A,D')$
fixing the dg-submanifold $(A[1],d_A)$.
This completes the proof of Theorem~\ref{Ohio}.
\end{proof}

\begin{proof}[Proof of Addendum to Theorem~\ref{Ohio}]
This follows from Theorem~\ref{Madagascar}
and Proposition~\ref{schist}.
\end{proof}

\begin{proof}[Proof of Proposition~\ref{zebra}]
Since the $L$-connection $\nabla=\nabla^{\Bott}+\ddot{\nabla}$ on $B$ is torsion-free
and the splitting $j$ identifies $L/A$ to $B$,
there exists, according to Theorem~\ref{Ohio}, a Kapranov dg-manifold structure on $A[1]\oplus B$
with homological vector field $D=d_A^{\Bott}+\sum_{k=2}^\infty\cR_k$
satisfying $\cR_2=\cocycle^\nabla$ and Equation~\eqref{whale}.
Since $B$ is a Lie subalgebroid of $L$
and the $B$-connection $\ddot{\nabla}$ on $B$ is flat,
Equation~\eqref{worm} holds.
Applying the projection $\varpi:S(B)\onto S^1(B)$ to both sides of Equation~\eqref{worm}
and making use of Equation~\eqref{whale}, we deduce that
\[ \cR_{k+1}(a;b^{k+1})=\ddot{\nabla}_b\big(\cR_k(a;b^k)\big)
-\cR_k\big(\Delta^{\Bott}_b a;b^k\big)-\cR_k\big(a;\ddot{\nabla}_b(b^k)\big) \]
for all $a\in\sections{A}$, $b\in\sections{B}$, and $k\in\NN$ with $k\geqslant 2$.
Indeed, for $k\geqslant 2$, we have
\[ \varpi\big(b\odot\Theta(i(a);b^k)-k\cocycle^\nabla(a;b^2)\odot b^{k-1}\big)=0 \]
since $\cocycle^\nabla(a;b^2)\odot b^{k-1}\in\sections{S^{\geqslant 2}(B)}$
and $\Theta(i(a);b^k)\in\sections{S^{\geqslant 1}(B)}$ by Lemma~\ref{Grez-Doiceau}.
Hence, we have
\[ \begin{split} \cR_{k+1}(a;b_0\odot b_1\odot\cdots\odot b_k)
= \sum_{j=0}^k \Big\{ & \ddot{\nabla}_{b_j}
\big(\cR_k(a;b_0\odot\cdots\odot\widehat{b_{j}}
\odot\cdots\odot b_k)\big) \\
& -\cR_k\big(\Delta^{\Bott}_{b_j}a;b_0\odot\cdots\odot\widehat{b_{j}}
\odot\cdots\odot b_k\big) \\
& -\cR_k\big(a;\ddot{\nabla}_{b_j}(b_0\odot\cdots\odot\widehat{b_{j}}
\odot\cdots\odot b_k)\big) \Big\}
\end{split} \]
for all $a\in\sections{A}$, $b_0,b_1,\dots,b_k\in\sections{B}$,
and $k\geqslant 2$.
The dual of the vector bundle morphism
\[ S^{k+1}(B)\ni b_0\odot\cdots\odot b_k
\longmapsto \sum_{j=0}^k b_j\otimes(b_0\odot\cdots\odot
\widehat{b_j}\odot\cdots\odot b_k)\in B\otimes S^k(B) \]
is the symmetrization map
\[ \sym:B^\vee\otimes S^k(B^\vee)\to S^{k+1}(B^\vee) \]
sending $b\otimes(b_1\odot b_2\odot\cdots\odot b_k)$
to $b\odot b_1\odot b_2\odot\cdots\odot b_k$.
Therefore, we conclude that, for $k\geqslant 2$,
\[ \cR_{k+1}=(\id_{A^\vee\otimes B}\otimes\sym)
\big(d_B^{\widehat{\nabla}}\cR_k\big) ,\]
where $\widehat{\nabla}$ denotes the $B$-connection on
$A^\vee\otimes B\otimes S^k(B^\vee)$ induced by the
$B$-connections $\Delta^{\Bott}$ on $A$ and $\ddot{\nabla}$ on $B$.
\end{proof}

\begin{proof}[Proof of Theorem~\ref{Montana}]
\quad\textbf{(\ref{een})$\Rightarrow$(\ref{twee})}\quad According to Corollary~\ref{Cantonese},
the characteristic class of a linearizable Kapranov dg-manifold vanishes.
Moreover, according to Theorem~\ref{Ohio}, the characteristic class
of the Kapranov dg-manifold arising from a Lie pair is the Atiyah class
of the Lie pair.
Therefore, if the Kapranov dg-manifold arising from a Lie pair $(L,A)$ is linearizable,
the Atiyah class of the Lie pair must vanish.
\newline
\quad\textbf{(\ref{twee})$\Rightarrow$(\ref{drie})}\quad Fix a splitting $j$.
Since the Atiyah class of the Lie pair $(L,A)$ is assumed to be zero,
there exists, according to \cite[Theorem~2.5]{MR3439229}, an $L$-connection $\nabla$ on $L/A$
extending the Bott representation and such that the associated $1$-cocycle $\cocycle^\nabla$ vanishes.
Without loss of generality, we may assume that $\nabla$ is torsion-free
--- see the proof of Proposition~\ref{Vilnius}.
It follows from Lemma~\ref{horse} that the resulting map $\pbw^{\nabla,j}$
is an isomorphism of $A$-modules.
\newline
\quad\textbf{(\ref{drie})$\Rightarrow$(\ref{een})}\quad
According to Lemma~\ref{horse},
the map $\pbw^{\nabla,j}$ is a morphism of $A$-modules
if and only if $\Theta\big(i(a);s\big)=0$ (i.e.\ $\nabla^{\lightning}_{i(a)}s=\nabla^{\Bott}_a s$)
for all $a\in\sections{A}$ and $s\in\sections{S(L/A)}$.
It then follows from Equations~\eqref{Quaregnon}
and~\eqref{whale} that $D = d_A^{\nabla^{\Bott}}$, which shows that the
Kapranov dg-manifold $(A[1]\oplus L/A,D)$ is linear.
\end{proof}

\begin{proof}[Proof of Addendum to Theorem~\ref{Montana}]
In the real case, the equivalence of \textbf{(\ref{drie})}
and \textbf{(\ref{vier})} follows immediately
from Theorem~\ref{Madagascar} and the description
of the actions at play given in Propositions~\ref{Bott_vf}
and~\ref{Fedosov_vf}.
It is obvious that \textbf{(\ref{vier})}
implies \textbf{(\ref{vijf})}.
Finally, it follows from the last assertion of the Addendum to Theorem~\ref{Ohio}
that \textbf{(\ref{vijf})} implies \textbf{(\ref{een})}.
\end{proof}

\subsection{Example from complex geometry}
\label{Amiens}

Every complex manifold $X$ determines a Lie pair $(L=T_X\otimes\CC,A=T\zo_X)$
for which $L/A\cong T\oz_X$, the Bott connection
\[ \nabla^{\Bott}:\sections{T\zo_X}\times\sections{T\oz_X}\to\sections{T\oz_X} \]
is precisely the Dolbeault operator (i.e.\ $d_A^{\nabla^{\Bott}}=\overline{\partial}$),
and the Atiyah class $\alpha_{L/A}$ is the classical Atiyah class
of the holomorphic tangent bundle $T_X$ --- see~\cite{MR3439229}.
Therefore, as a consequence of Theorems~\ref{Ohio} and~\ref{Montana}, we obtain

\begin{theorem}
Let $X$ be a complex manifold.
\begin{enumerate}
\item Every torsion-free $T\oz_X$-connection $\nabla\oz$ on $T\oz_X$ determines
a Kapranov dg-manifold structure on $T\zo_X[1]\oplus T\oz_X$
with homological vector field $D=\overline{\partial}+\sum_{k\geqslant 2}\cR_k$,
where $\cR_2=\cocycle^{\nabla^{\Bott}+\nabla\oz}$ is the Atiyah $1$-cocycle
associated with the $T_X\otimes \CC$-connection $\nabla^{\Bott}+\nabla\oz$
on $T\oz_X$ and all \[ \cR_k \in \Omega^{0, 1}\big(S^k{(T\oz_X)}^\vee\otimes T\oz_X\big) \]
with $k\geqslant 3$ 
can be computed explicitly as algebraic functions of $\cR_2$, 
the curvature of $\nabla\oz$, their higher covariant derivatives, 
and compositions thereof.
\item The Dolbeault complex $\OO^{0,\bullet}(T\oz_X)$ admits an $L_\infty[1]$ algebra structure.
For $n\geqslant 2$, the $n$-th multibracket $\lambda_n$
is the composition of the wedge product
\[ \OO^{0,j_1}(T\oz_X)\otimes\cdots\otimes\OO^{0,j_n}(T\oz_X)
\to\OO^{0,j_1+\cdots+j_n}(\otimes^n T\oz_X) \]
with the map
\[ \OO^{0,j_1+\cdots+j_n}(\otimes^n T\oz_X)\to\OO^{0,j_1+\cdots+j_n+1}(T\oz_X) \]
induced by $\cR_n\in\OO\zo\big(\Hom(\otimes^n T\oz_X,T\oz_X)\big)$,
while the unary bracket $\lambda_1$ is the Dolbeault operator
$\overline{\partial}:\Omega^{0,j}(T\oz_X)\to\Omega^{0,j+1}(T\oz_X)$.
\item
The Kapranov dg-manifold structures on $T\zo_X[1]\oplus T\oz_X$
resulting from different choices of torsion-free connections $\nabla\oz$ are all isomorphic.
Hence the $L_\infty[1]$ algebra structures on the Dolbeault complex $\OO^{0,\bullet}(T\oz_X)$
resulting from different choices of $\nabla\oz$ are isomorphic.
\item The Kapranov dg-manifold $T\zo_X[1]\oplus T\oz_X$ is linearizable
if and only if the Atiyah class of the holomorphic tangent bundle $T_X$ vanishes.
\end{enumerate}
\end{theorem}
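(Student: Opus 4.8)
The strategy is to specialize Theorems~\ref{Ohio} and~\ref{Montana} to the Lie pair $(L,A)=(T_X\otimes\CC,\,T\zo_X)$. Granting the three identifications recorded just before the statement---$L/A\cong T\oz_X$, $d_A^{\nabla^{\Bott}}=\overline{\partial}$, and $\alpha_{L/A}=\alpha_{T_X}$---all four assertions follow from the general theory, the only genuinely complex-geometric inputs being these identifications together with one structural simplification peculiar to this pair.

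For~(1) I would use the tautological splitting $j\colon T\oz_X\into T_X\otimes\CC$ coming from $T_X\otimes\CC=T\oz_X\oplus T\zo_X$ and form the $L$-connection $\nabla=\nabla^{\Bott}+\nabla\oz$ extending the Bott representation; Theorem~\ref{Ohio} then at once yields $\Qd=d_A^{\nabla^{\Bott}}+\sum_{k\geqslant 2}\cR_k$ with $\cR_2=\cocycle^{\nabla}$, valid for every $\nabla\oz$. The decisive simplification is that integrability of the complex structure forces $[\sections{T\oz_X},\sections{T\oz_X}]\subseteq\sections{T\oz_X}$, so $j(L/A)=T\oz_X$ is a Lie subalgebroid of $L$ and the bundle map $\alpha\colon\wedge^2(L/A)\to A$ vanishes identically. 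The remaining lower-order datum $\beta$ is just the torsion of $\nabla\oz$; choosing $\nabla\oz$ torsion-free (possible by Lemma~\ref{Berlin}, and legitimate because the canonical-isomorphism clause of Theorem~\ref{Ohio} makes the structure independent of $\nabla\oz$ up to isomorphism) makes $\beta=0$ as well. With $\alpha=\beta=0$ the recursions~\eqref{monkey} and~\eqref{snake} of Proposition~\ref{cow} feed on nothing but $\cocycle^\nabla=\cR_2$ and the term $\curvature(j(b),j(c))$---that is, the curvature of $\nabla\oz$---so an induction on symmetric degree, together with $\cR_k=-\pr_1\big(\bR(a;b^{k})\big)$, presents each $\cR_k$ with $k\geqslant 3$ as an algebraic function of $\cR_2$, the curvature of $\nabla\oz$, and their covariant derivatives.

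Assertions~(2)--(4) require no further geometry. For~(2), the implication $(\ref{uno})\Rightarrow(\ref{sei})$ of Theorem~\ref{Tasmania} endows $\bigoplus_n\sections{\wedge^n A\dual\otimes L/A}=\OO^{0,\bullet}(T\oz_X)$ with an $L_\infty[1]$ structure whose unary bracket is $d_A^{\nabla^{\Bott}}=\overline{\partial}$ and whose higher brackets are $\lambda_k(\xi_1\otimes e_1,\dots,\xi_k\otimes e_k)=(-1)^{\abs{\xi_1}+\cdots+\abs{\xi_k}}\,\xi_1\wedge\cdots\wedge\xi_k\wedge\cR_k(e_1,\dots,e_k)$, exactly the wedge-then-contract description in the statement. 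Assertion~(3) is the final clause of Theorem~\ref{Ohio}: varying $\nabla\oz$ (the splitting being canonical here) yields canonically isomorphic Kapranov dg-manifolds, and transporting the isomorphism through Theorem~\ref{Tasmania} gives the canonical $L_\infty[1]$-isomorphism on $\OO^{0,\bullet}(T\oz_X)$. Assertion~(4) is the equivalence $(\ref{een})\Leftrightarrow(\ref{twee})$ of Theorem~\ref{Montana}: linearizability holds precisely when $\alpha_{L/A}$ vanishes, and $\alpha_{L/A}=\alpha_{T_X}$ is the classical Atiyah class.

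The real work lies not in these deductions but in the three borrowed identifications, the subtlest being $\alpha_{L/A}=\alpha_{T_X}$. I would establish it at the level of cocycles: the Lie-pair representative $\cocycle^\nabla(a;b_1\odot b_2)=\tfrac12\{\curvature(a,j(b_1))b_2+\curvature(a,j(b_2))b_1\}$ is literally the symmetrization of the classical Atiyah cocycle $\mathcal{R}_2(a,b)=\nabla_a\nabla_b-\nabla_b\nabla_a-\nabla_{[a,b]}$ recalled in the introduction, both being built from the curvature of the \emph{same} connection $\nabla^{\Bott}+\nabla\oz$ with one slot constrained to $T\zo_X$; flatness of the Bott connection---equivalently, that $\overline{\partial}$ is the canonical $(0,1)$-connection making $T\oz_X$ holomorphic---guarantees these are Dolbeault $1$-cocycles representing one and the same class in $H^{1,1}(X;\End T\oz_X)\cong H^1(X;\OO_X^1\otimes\End T_X)$. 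In tandem I would verify $d_A^{\nabla^{\Bott}}=\overline{\partial}$ directly, identifying $\sections{\wedge^k(T\zo_X)\dual}$ with $\OO^{0,k}(X)$ and the Bott differential with the Dolbeault operator. Once these identifications are secured, the remainder is bookkeeping.
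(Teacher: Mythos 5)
Your overall route is exactly the paper's: the paper derives this theorem by recording the identifications $L/A\cong T\oz_X$, $d_A^{\nabla^{\Bott}}=\overline{\partial}$, $\alpha_{L/A}=\alpha_{T_X}$ and then citing Theorems~\ref{Ohio} and~\ref{Montana}, and your treatment of assertions (2)--(4) is precisely that specialization (Theorem~\ref{Tasmania}, implication (\ref{uno})$\Rightarrow$(\ref{sei}), for (2); the final clause of Theorem~\ref{Ohio} for (3); the equivalence (\ref{een})$\Leftrightarrow$(\ref{twee}) of Theorem~\ref{Montana} for (4)). Your observation that integrability of the complex structure makes $j(L/A)=T\oz_X$ a Lie subalgebroid of $T_X\otimes\CC$, hence $\alpha=0$, is also correct and is the key complex-geometric simplification in assertion (1).

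The gap is the ``choose $\nabla\oz$ torsion-free'' step in assertion (1). That assertion concerns the specific homological vector field $D_{\pbw^{\nabla,j}}$ attached to the \emph{given} connection: it identifies its quadratic term with $\cocycle^{\nabla^{\Bott}+\nabla\oz}$ and constrains the higher terms $\cR_k$ of that same vector field. The canonical-isomorphism clause of Theorem~\ref{Ohio} cannot be used to pass to the torsion-free modification $\nabla'$ of Lemma~\ref{Berlin}: an isomorphism of Kapranov dg-manifolds does not preserve the individual tensors $\cR_k$ --- by the proposition preceding Corollary~\ref{Cantonese} it only preserves the cohomology class $[\cR_2]$ --- so proving the claimed form of the $\cR_k$ for torsion-free connections says nothing about their form for the original one. (Incidentally, $\cocycle^{\nabla^{\Bott}+\nabla'}=\cocycle^{\nabla^{\Bott}+\nabla\oz}$ does hold, because the two cocycles differ by the symmetrization of the skew-symmetric tensor $\nabla_a\beta$, as in the remark following the definition of the Atiyah class; but you neither state nor use this fact, and even granting it, the higher $\cR_k$ produced from $\nabla'$ are algebraic in the curvature of $\nabla'$, which differs from the curvature of $\nabla\oz$ by torsion terms, so the torsion is not eliminated, merely hidden.) The correct move is to keep the given connection and run Proposition~\ref{cow} / Remark~\ref{rabbit} directly: with $\alpha=0$, the recursions \eqref{monkey} and \eqref{snake} exhibit each $\cR_k$, $k\geqslant 3$, as an algebraic function of $\cR_2$, the curvature of $\nabla\oz$, \emph{and its torsion} $\beta$ (the torsion genuinely enters; already $\bH(c;b)=\tfrac{1}{2}\beta(b,c)$), together with their covariant derivatives. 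This is exactly how Theorem~\ref{Amsterdam} is phrased (``the torsion and curvature of $\nabla$''); the wording of the present theorem, which lists only the curvature, has to be read as including the torsion among the data --- it is not something one can legislate away by changing the connection.
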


When $X$ is a Kähler manifold, the Levi-Civita connection $\nabla^{\LC}$
induces a $T\oz_X$-connection $\nabla\oz$ on $T\oz_X$ as follows.
First, extend the Levi-Civita connection $\CC$-linearly
to a $T_X\otimes\CC$-connection $\nabla^\CC$ on $T_X\otimes\CC$.
Since $X$ is Kähler, the almost complex structure $J$ on $X$
is parallel (i.e.\ $\nabla^{\LC}J=0$), and therefore $\nabla^\CC$
restricts to a $T_X\otimes\CC$-connection on $T\oz_X$.
It is easy to check that the induced $T\zo_X$-connection on $T\oz_X$
is the canonical representation of the Lie algebroid $T\zo_X$
on the vector bundle $T\oz_X$ --- a local section of $T\oz_X$ is $T\zo_X$-horizontal
if and only if it is holomorphic --- while the induced $T\oz_X$-connection
$\nabla\oz$ on $T\oz_X$ is flat and torsion-free.
Thus according to Proposition~\ref{zebra},
we have\footnote{Kapranov writes $\cR_{n+1}=d^{\nabla^{1,0}}\cR_n$ instead.}
\[ \cR_{n+1}=(\id_{(T\zo_X)^\vee\otimes T\oz_X}\otimes\sym)
\big(d_{T\oz_X}^{\widehat{\nabla}}\cR_n\big)
,\quad\text{for}\ n\geqslant 2 .\]

Hence, we recover a classical theorem of Kapranov~\cite[Theorem~2.6]{MR1671737}.

\appendix

\section{Coderivations of the symmetric coalgebra}

Given a module $V$ over a commutative ring $R$ with identity $1$,
its symmetric tensor algebra $S(V)$ admits a natural $R$-linear comultiplication:
the deconcatenation map $\Delta:S(V)\to S(V)\otimes_R S(V)$ defined by Equation~\eqref{London}.
For every $v\in V$ and every non-negative integer $n$, we have
\[ \Delta(v^n)=\sum_{k=0}^{n}\binom{n}{k} v^k\otimes v^{n-k} ,\]
where $v^k$ denotes the symmetric product of $k$ copies of $v$
while $v^0$ denotes the unit element $1$ of the ring $R$.
Thus $\big(S(V),\Delta\big)$ is a $R$-coalgebra.

The purpose of this appendix is to establish Proposition~\ref{shark} below.

The following result is immediate.
\begin{lemma}\label{squirrel}
$\ker\big(\Delta-(1\otimes\id+\id\otimes 1)\big)=S^1(V)$
\end{lemma}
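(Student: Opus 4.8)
The plan is to prove the two inclusions separately, the nontrivial direction being that every primitive element is concentrated in degree one. Throughout I use that every positive integer is invertible in $R$ (as holds in the characteristic-zero setting of this paper); this hypothesis is essential, since in characteristic $p$ the kernel $\ker\big(\Delta-(1\otimes\id+\id\otimes 1)\big)$ genuinely contains elements like $v^p$. The inclusion $S^1(V)\subseteq\ker\big(\Delta-(1\otimes\id+\id\otimes 1)\big)$ is immediate: taking $n=1$ in the formula $\Delta(v^n)=\sum_{k=0}^n\binom{n}{k}v^k\otimes v^{n-k}$ gives $\Delta(v)=1\otimes v+v\otimes 1$ for every $v\in S^1(V)=V$.

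For the reverse inclusion, I would first exploit the fact that deconcatenation is graded: it maps $S^n(V)$ into $\bigoplus_{p+q=n}S^p(V)\otimes_R S^q(V)$. Writing a primitive element $x=\sum_{n\geq 0}x_n$ with $x_n\in S^n(V)$, and noting that $1\otimes x_n$ and $x_n\otimes 1$ both lie in the total-degree-$n$ summand, the primitivity equation decouples into one equation per degree, namely $\Delta(x_n)=1\otimes x_n+x_n\otimes 1$ for every $n$. It therefore suffices to show that, for $n\geq 2$, this equation forces $x_n=0$, the cases $n=0$ and $n=1$ being handled directly.

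The heart of the argument is the component $\Delta_{1,n-1}\colon S^n(V)\to S^1(V)\otimes_R S^{n-1}(V)$ of the coproduct. Reading off the $(1,n-1)$-part of the deconcatenation formula~\eqref{London}, one finds $\Delta_{1,n-1}(b_1\odot\cdots\odot b_n)=\sum_{i=1}^n b_i\otimes(b_1\odot\cdots\odot\widehat{b_i}\odot\cdots\odot b_n)$. Composing with the multiplication $m\colon S^1(V)\otimes_R S^{n-1}(V)\to S^n(V)$, $v\otimes w\mapsto v\odot w$, collapses this sum to $n\,(b_1\odot\cdots\odot b_n)$; since such products span $S^n(V)$, this establishes the identity $m\circ\Delta_{1,n-1}=n\cdot\id_{S^n(V)}$.

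Now for $n\geq 2$ the element $1\otimes x_n+x_n\otimes 1$ has no component in bidegree $(1,n-1)$, so the degree-$n$ primitivity equation yields $\Delta_{1,n-1}(x_n)=0$, whence $n\,x_n=m\big(\Delta_{1,n-1}(x_n)\big)=0$ and therefore $x_n=0$ by invertibility of $n$. For $n=1$ there is no constraint, while for $n=0$ the $R$-linearity of $\Delta$ together with $\Delta(1)=1\otimes 1$ gives $\Delta(x_0)=x_0\otimes 1$, so primitivity reads $x_0\otimes 1=2\,x_0\otimes 1$ in $S^0(V)\otimes_R S^0(V)\cong R$, forcing $x_0=0$. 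Hence $x=x_1\in S^1(V)$, completing the proof. The only genuine obstacle is the characteristic-zero hypothesis concealed in the invertibility of $n$; everything else is a direct reading of the combinatorics of deconcatenation.
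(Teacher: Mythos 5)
Your proof is correct, and every step checks out: the degree-wise decoupling of the primitivity equation (valid because deconcatenation is graded), the computation of the bidegree-$(1,n-1)$ component from formula~\eqref{London}, the identity $m\circ\Delta_{1,n-1}=n\cdot\id_{S^n(V)}$ on the generators $b_1\odot\cdots\odot b_n$ (which do span $S^n(V)$ as an $R$-module), and the separate treatment of degrees $0$ and $1$. There is, however, nothing in the paper to compare it against: the authors preface the lemma with ``The following result is immediate'' and give no argument at all, so your Euler-operator argument is precisely the kind of computation they suppress.

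One point you raise is a genuine sharpening rather than mere pedantry. The appendix states the lemma for an arbitrary commutative ring $R$ with identity, and in that generality the statement is \emph{false}: in characteristic $p$ the element $v^p$ satisfies $\Delta(v^p)=1\otimes v^p+v^p\otimes 1$ because the intermediate binomial coefficients vanish, exactly as you observe. Your hypothesis that the positive integers be invertible in $R$ is what makes both the argument and the lemma itself go through; it holds in the paper's application, where $R$ is the algebra of $\KK$-valued smooth functions and $\KK$ has characteristic zero, but it is not implied by the appendix's stated hypotheses and deserves to be made explicit.
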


\begin{lemma}\label{chicken}
If $\delta\in\End_R\big(S(V)\big)$ is a coderivation
of the symmetric $R$-coalgebra $\big(S(V),\Delta\big)$,
then $\delta(1)\in S^1(V)$.
\end{lemma}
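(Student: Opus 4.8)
The plan is to recognize $S^1(V)$ as the space of \emph{primitive} elements of the coalgebra $\big(S(V),\Delta\big)$ by way of Lemma~\ref{squirrel}, and then to check that $\delta(1)$ is primitive through a single application of the defining identity of a coderivation. The whole argument is formal, so the only real task is to assemble the two ingredients in the right order.

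First I would record the comultiplication of the unit. Taking $n=0$ in the formula $\Delta(v^n)=\sum_{k=0}^{n}\binom{n}{k}v^k\otimes v^{n-k}$ displayed at the start of the appendix (equivalently, by the definition of deconcatenation on $S^0(V)=R$), one obtains $\Delta(1)=1\otimes 1$. This is the fact that makes the computation collapse.

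Next I would invoke the coderivation identity $\Delta\circ\delta=(\delta\otimes\id+\id\otimes\delta)\circ\Delta$ and evaluate it on the element $1\in S^0(V)$. Using $\Delta(1)=1\otimes 1$, this yields
\[ \Delta\big(\delta(1)\big)=(\delta\otimes\id+\id\otimes\delta)(1\otimes 1)=\delta(1)\otimes 1+1\otimes\delta(1) .\]
In other words, $\delta(1)$ lies in the kernel of the operator $\Delta-(1\otimes\id+\id\otimes 1)$.

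Finally, Lemma~\ref{squirrel} identifies precisely this kernel with $S^1(V)$, so $\delta(1)\in S^1(V)$, as claimed. I do not anticipate any genuine obstacle here: the subtlety, such as it is, lies entirely in reading off $\Delta(1)=1\otimes 1$ correctly and in matching the resulting primitivity relation to the characterization of $S^1(V)$ furnished by Lemma~\ref{squirrel}; once these are in place the conclusion is immediate.
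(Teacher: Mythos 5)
Your proof is correct and follows the same essential path as the paper's: evaluate the coderivation identity at $1$ using $\Delta(1)=1\otimes 1$, then invoke Lemma~\ref{squirrel}. If anything, your version is tidier --- the paper first decomposes $\delta(1)=\varepsilon+v+H$ with $\varepsilon\in R$, $v\in V$, $H\in S^{\geqslant 2}(V)$ and separates the resulting identity by degree before applying Lemma~\ref{squirrel} to the component $H$, whereas you observe that the identity $\Delta\big(\delta(1)\big)=\delta(1)\otimes 1+1\otimes\delta(1)$ already exhibits $\delta(1)$ as an element of $\ker\big(\Delta-(1\otimes\id+\id\otimes 1)\big)$, which Lemma~\ref{squirrel} identifies with $S^1(V)$ outright.
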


\begin{proof}
We have $\delta(1)=\varepsilon+v+H$ for some $\varepsilon\in R$,
$v\in V$, and $H\in S^{\geqslant 2}(V)$.
Since $\delta$ is a coderivation, we have
\[ \Delta\circ\delta(1)=(\delta\otimes\id+\id\otimes\delta)\circ\Delta(1) .\]
Since $\Delta (v)=v\otimes 1+1 \otimes v$ and $\Delta (1)=1 \otimes 1$,
it follows that
\[ \varepsilon\cdot 1\otimes 1+\Delta(H)=\varepsilon\cdot 1\otimes 1+
H\otimes 1+\varepsilon\cdot 1\otimes 1+1\otimes H ,\]
which implies that
\[ \Delta(H)-(1\otimes H+H\otimes 1)=\varepsilon\cdot 1\otimes 1 .\]
In this last equation, the right hand side belongs to $S^0(V)\otimes S^0(V)$ while the left hand side
belongs to $S^{\geqslant 1}(V)\otimes S^{\geqslant 1}(V)$.
Therefore, we obtain:
\[ \left\{
\begin{array}{l}
\Delta(H)-(1\otimes H+H\otimes 1)=0 \\
\varepsilon\cdot 1\otimes 1 =0
\end{array}
\right. .\]
From Lemma~\ref{squirrel}, it follows that $\varepsilon=0$ and $H=0$.
Hence $\delta(1)=v\in S^1(V)$.
\end{proof}

\begin{proposition}
\label{shark}
A coderivation $\delta$ of the symmetric $R$-coalgebra
$\big(S(V),\Delta\big)$ preserves the filtration
\[ R \into S^{\leqslant 1}(V) \into S^{\leqslant 2}(V)
\into S^{\leqslant 3}(V) \into \cdots \]
if and only if $\delta(1)=0$.
\end{proposition}

\begin{proof}
Assume $\delta\big(S^{\leqslant n}(V)\big)\subset S^{\leqslant n}(V)$
for all non-negative integers $n$.
Then $\delta(1)\in S^0(V)$ as $1\in S^0(V)$.
Since $\delta$ is a coderivation, we have $\delta(1)\in S^1(V)$ by Lemma~\ref{chicken}.
Therefore $\delta(1)=0$.

Conversely, assume that $\delta(1)=0$.
Then, obviously, $\delta\big(S^0(V)\big)=\{0\}\subset S^0(V)$.
Given $w\in V$, we have $\delta(w)=\varepsilon+v+H$ for some
$\varepsilon\in R$, $v\in V$, and $H\in S^{\geqslant 2}(V)$.
Since $\delta$ is a coderivation, we have
$\Delta\circ\delta(v)=(\delta\otimes\id+\id\otimes\delta)\circ\Delta(v)$.
By the same argument as in the proof of Lemma~\ref{chicken}, we have $H=0$ and $\epsilon =0$.
Hence $\delta(w)=v\in V$. This shows that $\delta\big(S^1(V)\big)\subset S^1(V)$.
For $n\geqslant 2$, we proceed by induction.
Suppose that, for any $v\in V$, $\delta(v^k)\in S^{\leqslant k}(V)$ for $0\leqslant k\leqslant n-1$.
We write $\delta(v^n)=x+y$, where $x\in S^{\leqslant n}(V)$ and $y\in S^{>n}(V)$.
Since $\delta$ is a coderivation:
\begin{align*}
\Delta(x+y)=&(\delta\otimes\id+\id\otimes\delta)
\big(1\otimes v^n+\sum_{k=1}^{n-1}\binom{n}{k} v^k\otimes v^{n-k}+v^n\otimes 1 \big).
\end{align*}
This implies that
\[ \Delta(y)-(1\otimes y+y\otimes 1)=1\otimes x+x\otimes 1-\Delta(x)
+\sum_{k=1}^{n-1}\binom{n}{k}\big(\delta(v^k)\otimes v^{n-k}+v^k\otimes\delta(v^{n-k})\big) .\]
In this last equation, while the left hand side belongs
to $\bigoplus_{p+q>n}\big(S^p(V)\otimes S^q(V)\big)$,
the right hand side belongs to $\bigoplus_{p+q\leqslant n}\big(S^p(V)\otimes S^q(V)\big)$
by the induction hypothesis.
Therefore, $y\in S^{>n}(V)\cap\ker\big(\Delta-(1\otimes\id+\id\otimes 1)\big)$.
It follows from Lemma~\ref{squirrel} that $y=0$.
Hence $\delta(v^n)=x\in S^{\leqslant n}(V)$.
\end{proof}

\printbibliography

\end{document}